\newtheorem{theorem}{Theorem}
\newtheorem{lemma}[theorem]{Lemma}
\newtheorem{proposition}[theorem]{Proposition}
\newtheorem{corollary}[theorem]{Corollary}
\newtheorem{question}[theorem]{Question}
\newtheorem{definition}[theorem]{Definition}
\newtheoremstyle{TheoremNum}
  {\topsep}{\topsep}              
  {\itshape}                      
  {}                              
  {\bfseries}                     
  {.}                             
  { }                             
  {\thmname{#1}\thmnote{ \bfseries #3}}
\theoremstyle{TheoremNum}
\newtheorem{reptheorem}{Theorem}
\newtheoremstyle{TheoremNum}
  {\topsep}{\topsep}              
  {\itshape}                      
  {}                              
  {\bfseries}                     
  {.}                             
  { }                             
  {\thmname{#1}\thmnote{ \bfseries #3}}
\theoremstyle{TheoremNum}
\begin{document}
\newcommand{\F}{\mathcal{F}}
\newcommand{\A}{\mathcal{A}}
\newcommand{\add}{\mathfrak{add}}
\newcommand{\cov}{\mathfrak{cov}}
\newcommand{\unif}{\mathfrak{unif}}
\newcommand{\cof}{\mathfrak{cof}}
\newcommand{\uu}{\mathfrak{u}}
\newcommand{\cc}{\mathfrak{c}}
\newcommand{\kk}{\mathfrak{k}}
\newcommand{\dd}{\mathfrak{d}}
\newcommand{\sss}{\mathfrak{s}}
\newcommand{\aaa}{\mathfrak{a}}
\newcommand{\pp}{\mathfrak{p}}
\newcommand{\ttt}{\mathfrak{t}}
\newcommand{\mm}{\mathfrak{m}\mathfrak{m}}
\newcommand{\bb}{\mathfrak{b}}
\newcommand{\ii}{\mathfrak{i}}
\newcommand{\N}{\mathbb{N}}
\newcommand{\M}{\mathbb{M}}
\newcommand{\Q}{\mathbb{Q}}
\newcommand{\R}{\mathbb{R}}
\newcommand{\K}{\mathbb{K}}
\newcommand{\C}{\mathbb{C}}
\newcommand{\PP}{\mathbb{P}}
\newcommand{\forces}{\Vdash}
\newcommand{\sgn}{\operatorname{sgn}}

\title[Rosenthal families,
 pavings  and  generic cardinal invariants]{ Rosenthal families,
 pavings  and  generic cardinal invariants}
\author{Piotr Koszmider}
\address{Institute of Mathematics, Polish Academy of Sciences,
ul. \'Sniadeckich 8,  00-656 Warszawa, Poland}
\email{\texttt{piotr.koszmider@impan.pl}}

\author{Arturo Mart\'\i nez-Celis}
\address{Institute of Mathematics, Polish Academy of Sciences,
ul. \'Sniadeckich 8,  00-656 Warszawa, Poland}
\email{\texttt{arodriguez@impan.pl}}
%
\subjclass{}
%

%
\begin{abstract} Following D. Sobota we call a family $\mathcal F$ of infinite subsets
of $\N$ a Rosenthal family if it can replace the family of all infinite subsets of $\N$
in  classical Rosenthal's Lemma concerning sequences of measures on pairwise disjoint sets.
We resolve two problems on Rosenthal families: every ultrafilter is a Rosenthal family and
the minimal size of a Rosenthal family  is exactly equal to the reaping cardinal 
$\mathfrak r$.
This is achieved  through analyzing nowhere reaping families of subsets of $\N$
and through applying a paving lemma which is a consequence of a paving lemma
concerning linear operators on $\ell_1^n$ due to Bourgain. 
We use connections of the above results with free set results for 
 functions on $\N$ and with linear operators on $c_0$ to  determine the values of several other 
derived cardinal invariants.

\end{abstract}

\maketitle

\section{Introduction}

Recall that the {\sl reaping number} $\mathfrak r$  is the minimal cardinality of
a family $\mathcal F$ of infinite subsets of $\N$ (denoted by $[\N]^\omega$) 
which is not split by a single subset of $\N$, i.e.,
such that there is no $A\subseteq \N$ such that $A\cap F$ and $F\setminus A$ are both
infinite for all $F\in \mathcal F$ (see \cite{blass} for more details).

A family $\mathcal D$ of infinite subsets of $\N$ 
 will be called {\sl dense} if
for every infinite $A\subseteq \N$ there is $B\in \mathcal D$ such that $B\subseteq A$.
Let $\mathsf D$   be a collection of dense sets. A family $\mathcal G\subseteq[\N]^\omega$  is called
a {\sl generic family} for $\mathsf D$ if $\mathcal G\cap \mathcal D\not=\emptyset$
for all $\mathcal D\in \mathsf D$. This terminology
agrees with the standard one for the partial order $([\N]^\omega, \subseteq)$ but
it should be stressed that our generic families need not to be filters. 
We define {\sl the generic cardinal number
 $\mathfrak{gen}(\mathsf D)$ for} $\mathsf D$
to be the minimal cardinality of a generic family for $\mathsf D$. It is clear
that if $\mathsf D$ is as above, then $\mathfrak{gen}(\mathsf D)\leq\mathfrak c$
as $[\N]^\omega$ is a generic family for $\mathsf D$.

As an example consider a sequence $(x_n)_{n\in \N}$ of zeros and ones. Let 
${{Conv}}^{01}_{(x_n)_{n\in \N}}$ 
be the family of all $A\in[ \N]^\omega$ such that $(x_n)_{n\in A}$ converges and
let $\mathsf{Conv}_{01}$ be the collection of all such sets ${Conv}^{01}_{(x_n)_{n\in \N}}$.
Then $\mathfrak{gen}(\mathsf{Conv}_{01})=\mathfrak{r}$ (Theorem 3.7 of \cite{blass}).
Here the Bolzano-Weierstrass theorem plays the role of a {\sl density lemma} i.e.,
a result asserting that the families in question are dense.
If $(x_n)_{n\in \N}$ above is an arbitrary bounded 
sequence of reals and  $\mathsf{Conv}$ is the family of all analogous dense
sets ${Conv}_{(x_n)_{n\in \N}}$, then  $\mathfrak{gen}(\mathsf{Conv})=\mathfrak{r}_\sigma$,
where $\mathfrak{r}_\sigma$ is a modified version of $\mathfrak r$ (see Section 3 of \cite{blass}).

Another  example of a density lemma, dense sets, generic families and the generic cardinal invariant is the following: Let $f: [\N]^2\rightarrow \{0,1\}$ and let
${Hom}_f$  be the family of all infinite subsets of $\N$ which are homogeneous for $f$. 
The Ramsey theorem as a density lemma yields the density of each set ${Hom}_f$.
If $\mathsf{Hom}$ is the collection of all such families ${Hom}_f$ 
for all functions $f$ as above, then
$\mathfrak{gen}(\mathsf{Hom})=\max(\mathfrak{d}, 
\mathfrak{r}_\sigma)$ (Theorem  3.10 of \cite{blass}), where 
the dominating number $\mathfrak d$ is the minimal size
of a family of functions from $\N$ to $\N$ which eventually dominate any such function. 

Moreover, it is proved in 4.7 and 4.9 of \cite{booth} that an ultrafilter is generic for
 $\mathsf{Conv}$ if and only if it is p-point and it is generic for
$\mathsf{Hom}$ if and only if it is  selective.
In particular, by a theorem of S. Shelah  it is consistent that there are no generic ultrafilters for
 $ \mathsf{Conv}$ and $\mathsf{Hom}$ (\cite{wimmers}, cf.  \cite{ppoints}). 
 This,  means that there are no generic families  
 for these collections of dense sets which satisfy the strong finite intersection property
 (i.e., intersection of every finite subfamily is infinite) as any ultrafilter extending such families would need to be a p-point or a selective  ultrafilter by the results of \cite{booth}.

The topic of this paper falls into the category of the results described above. 
Originally we were motivated by a classical density lemma  frequently used in several parts of
mathematics 
concerning sequences of measures (for the discussion of
its classical forms and uses see Subsection \ref{measures}) which can
be stated in the following  equivalent combinatorial form:

\begin{lemma}[Rosenthal's lemma; \cite{rosenthal1}, \cite{rosenthal2}]\label{rosenthal-intro}
Suppose that $M=(m_{k, n})_{k, n\in \N}$ is a matrix 
of non-negative reals, where  the set of sums $\{\sum_{n\in \N}m_{k, n}: k\in \N\}$
 is bounded.  For every $\varepsilon>0$ and every infinite $B\subseteq \N$  there is an 
 infinite $A\subseteq B$ 
such that for every $k\in A$ we have
$$\sum_{n\in A\setminus \{k\}}m_{k, n}\leq \varepsilon.\leqno (*)$$
\end{lemma}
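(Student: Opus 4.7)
The plan is to build an infinite $A=\{a_1<a_2<\cdots\}\subseteq B$ together with a decreasing chain of infinite sets $B=B_0\supseteq B_1\supseteq\cdots$ by induction on $n$, in such a way that both the \emph{forward} contributions $m_{a_i,a_j}$ for $j>i$ and the \emph{backward} contributions for $j<i$ to the sum in $(*)$ can be bounded. After rescaling we may assume $\sup_k\sum_n m_{k,n}\leq 1$; fix the geometric schedule $\delta_n=\varepsilon/2^{n+1}$.

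The combinatorial heart of the argument, and what I expect to be the main obstacle, is that the bound on row sums must be converted into control on individual columns. Precisely, for any infinite $C\subseteq\N$ and any $\delta>0$, at most $1/\delta$ columns $a$ can have $\{k\in C:m_{k,a}>\delta\}$ cofinite in $C$: if $a_{(1)},\ldots,a_{(J)}$ were all such, then the intersection of the corresponding cofinite sets would be nonempty in $C$, and any $k$ in it would witness $\sum_j m_{k,a_{(j)}}>J\delta$, so $J\delta\leq 1$. Consequently, for any such $C$ and $\delta$, cofinitely many columns $a$ are \emph{good} relative to $C$ at threshold $\delta$, meaning $\{k\in C:m_{k,a}\leq\delta\}$ is infinite. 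Once this transfer from row-boundedness to column-sparseness is available, the remainder is fusion-type bookkeeping.

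The inductive step at stage $n$ is then the following: pick $a_n\in B_{n-1}$ with $a_n>a_{n-1}$ lying outside the finite set of columns bad for $B_{n-1}$ at threshold $\delta_n$; by summability of the finitely many rows indexed by $a_1,\ldots,a_n$, choose $T_n$ such that $\sum_{m\geq T_n}m_{a_i,m}\leq\delta_n$ for every $i\leq n$; and set
\[
B_n=\bigl\{k\in B_{n-1}\setminus\{a_n\}:m_{k,a_n}\leq\delta_n\bigr\}\cap[T_n,\infty),
\]
which is infinite by the choice of $a_n$.

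To verify $(*)$, for each $i$ split $\sum_{n\in A\setminus\{a_i\}}m_{a_i,n}$ according to whether the index lies above or below $a_i$. Since $\{a_j:j>i\}\subseteq B_i\subseteq[T_i,\infty)$, the forward piece is at most $\sum_{m\geq T_i}m_{a_i,m}\leq\delta_i$. For the backward piece, $a_i\in B_j$ for every $j<i$ by the nesting, and the construction at stage $j$ guarantees $m_{k,a_j}\leq\delta_j$ for every $k\in B_j$; applied to $k=a_i$ this gives $m_{a_i,a_j}\leq\delta_j$, so the geometric sum over $j<i$ is strictly less than $\varepsilon/2$. Adding, $\sum_{n\in A\setminus\{a_i\}}m_{a_i,n}\leq\delta_i+\varepsilon/2<\varepsilon$, as required.
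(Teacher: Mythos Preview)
Your argument is correct. The pigeonhole step---that for any infinite $C$ and threshold $\delta>0$ at most $\lfloor 1/\delta\rfloor$ columns $a$ can have $\{k\in C:m_{k,a}\leq\delta\}$ finite---is sound, and the fusion that follows is carried out cleanly; both the forward bound via the tails $[T_i,\infty)$ and the backward bound via the nesting $a_i\in B_j$ for $j<i$ check out.

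Note, however, that the paper does not itself prove Lemma~\ref{rosenthal-intro}: it is quoted as a classical result with references. What the paper \emph{does} prove is the strictly stronger paving statement, Theorem~\ref{rosenthal-stronger}, from which Rosenthal's lemma follows immediately (intersect the finitely many pieces of the partition with $B$ and keep an infinite one; subsets of fragmenting sets still fragment since the entries are non-negative). That proof goes by an entirely different mechanism: split $M$ into its strictly upper- and lower-triangular parts, colour each greedily into $l$ classes (with a compactness step for the upper-triangular half, Lemma~\ref{ult2}), and take the common refinement of the two partitions.

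So your route is genuinely different. You build a single infinite $A\subseteq B$ by diagonalisation, trading off column-sparseness against row-tails, with no triangular decomposition and no compactness. The paving approach buys uniformity---the number of pieces depends only on $\varepsilon$, not on $M$---which is exactly what is needed for the ultrafilter result (Theorem~\ref{main-ultrafilter}) and the inequality $\mathfrak{ros}\leq\mathfrak r$. Your approach buys a direct, self-contained proof of the density statement that would be at home in an introductory account and does not rely on any auxiliary partition machinery.
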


Matrices $M$ as above will be called {\sl Rosenthal matrices}, the set of
all of them will be denoted  by $\mathbb M$. The supremum
of the set $\{\sum_{n\in \N}m_{k, n}: k\in \N\}$ will be called a norm of $M$ and
will be denoted $\|M\|_\infty$. If the condition (*) is satisfied, we will
say that $M$ is {\sl $\varepsilon$-fragmented by $A$}. So,  Rosenthal's lemma
is a density lemma which asserts that ${Ros}_{M, \varepsilon}=\{A\in [\N]^\omega:
M\ \hbox{is}\  \varepsilon\hbox{-fragmented by}\ A\}$ is dense for every 
 $M\in \mathbb M$ and every $\varepsilon>0$.
Generic families
for $\mathsf{Ros}=\{{Ros}_{M, \varepsilon}: M\in \mathbb M,\ \varepsilon>0\}$ 
were introduced by D. Sobota in \cite{damian-rosenthal} and called {\sl Rosenthal families}.
It was proved in
\cite{damian-rosenthal}  that a basis of
a selective ultrafilter is a Rosenthal family and the
 following cardinal invariant which is the generic cardinal
 invariant $\mathfrak{gen}(\mathsf{Ros})$ was defined: 
$$\mathfrak{ros}=\min\{|\F|: \F \ \hbox{is a Rosenthal family}\}.$$
It was determined in \cite{damian-rosenthal} that $\mathsf{cov}(\mathcal M)\leq
\mathfrak{ros}\leq \mathfrak{u}_s$, where
$\mathfrak{u}_s$ stands for the minimal size of a base for a selective ultrafilter
if there is one, and $\mathfrak c$ otherwise.
In particular, it was proved in \cite{damian-rosenthal} that
$\mathfrak{ros}$ can be arbitrarily big on the scale of alephs and that it can be strictly smaller than
the continuum. The role of selective ultrafilters here
is natural as S. Todorcevic has shown that all of them  are $([\N]^\omega, \subseteq^*)$-generic 
over $L[\R]$, at least under a suitable large cardinal assumption (see 4.4. of \cite{farah}).
It should also be noted that the value of $\mathfrak{ros}$ in various models of set theory 
is not a  mere curiosity. As the Rosenthal lemma is a practical tool used for proving properties
of Boolean algebras, compact spaces, sequences of measures or Banach spaces, the value of
$\mathfrak{ros}$ tells us what are the sizes of the objects whose constructions
require the use of the Rosenthal lemma with the output in the constructed structure. This is related to the topic of sizes of Boolean algebras or densities of Banach spaces
with the Grothendieck property or with the Nikodym property (\cite{damian-nikodym}).
The first main result of this paper is the following:
\vskip 6pt
\begin{reptheorem}[\ref{main-theorem}]
The Rosenthal number $\mathfrak {ros}$ is equal to the reaping number
$\mathfrak r$.
\end{reptheorem}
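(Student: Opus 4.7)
The plan is to establish $\mathfrak{ros}\le\mathfrak r$ and $\mathfrak r\le\mathfrak{ros}$ separately, using the two tools the abstract mentions: the paving lemma and the analysis of nowhere reaping families.

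For the upper bound I would fix a reaping family $\mathcal R\subseteq[\N]^\omega$ of cardinality $\mathfrak r$ and enlarge it to $\F=\{R\setminus G:R\in\mathcal R,\ G\subseteq\N\text{ finite}\}$, which still has cardinality $\mathfrak r$. To show that $\F$ is Rosenthal, fix any $M\in\M$ and $\varepsilon>0$ and apply the paving lemma (which the paper derives from Bourgain's paving theorem for operators on $\ell_1^n$) to obtain a finite partition $\N=P_1\sqcup\cdots\sqcup P_k$ such that $\sum_{n\in P_i\setminus\{j\}}m_{j,n}\le\varepsilon$ for every $i$ and every $j\in P_i$. Since $\mathcal R$ is reaping it has the finite partition property, so some $R\in\mathcal R$ satisfies $R\subseteq^*P_i$ for some $i$; then $R':=R\cap P_i$ differs from $R$ by only finitely many elements, hence lies in $\F$, and $R'$ $\varepsilon$-fragments $M$ by construction of the partition.

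For the lower bound the plan is to show that every Rosenthal family is already a reaping family, from which $|\F|\ge\mathfrak r$ follows at once. Given a Rosenthal family $\F$ and a set $S\subseteq\N$, I would try to manufacture a single Rosenthal matrix $M_S\in\M$ together with some $\varepsilon_S>0$ such that every set which $\varepsilon_S$-fragments $M_S$ is almost contained in $S$ or in $\N\setminus S$; applying the Rosenthal property of $\F$ to $(M_S,\varepsilon_S)$ would then produce a member of $\F$ that $S$ does not split.

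The construction of $(M_S,\varepsilon_S)$ is where the main technical difficulty lies and where the nowhere reaping analysis should enter. A naive complete bipartite weighting between $S$ and $\N\setminus S$ has unbounded row sums, and any matrix with weights decaying along a fixed pattern can be defeated by a fragmenting set whose intersections with $S$ and $\N\setminus S$ are sparse enough. To circumvent this I would argue by contradiction: assuming that no suitable $(M_S,\varepsilon_S)$ exists, $\F$ gives rise to a nowhere reaping family on some infinite subset of $\N$, and one then iterates splittings while stabilizing row sums via repeated applications of the paving lemma to assemble a single Rosenthal pair $(M,\varepsilon)$ that no $F\in\F$ fragments, contradicting that $\F$ is Rosenthal. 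Carrying out that iterative bookkeeping, in particular keeping row sums uniformly bounded across the stages, is the hardest step.
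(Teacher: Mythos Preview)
Your upper bound is essentially the paper's argument, with one wrinkle: the implication ``$\mathcal R$ reaping $\Rightarrow$ for every finite partition some $R\in\mathcal R$ is almost contained in a single piece'' is not automatic for an arbitrary reaping family. The paper handles this by choosing $\mathcal F$ to be \emph{hereditarily} reaping (for every $A\in\mathcal F$ the family $\mathcal F\cap\mathcal P(A)$ is again reaping) and closed under finite modifications; then one descends through $P_1,\dots,P_l$ one piece at a time, always staying inside $\mathcal F$. Your closure under finite modifications alone does not buy this iteration, but the fix is immediate and does not change the cardinality.

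For the lower bound there is a genuine gap, and it stems from aiming at the wrong target. You try to prove that every Rosenthal family is reaping, i.e.\ to produce, for \emph{each} $S\subseteq\N$, a single pair $(M_S,\varepsilon_S)$ whose fragmenting sets are almost inside $S$ or $\N\setminus S$. As you yourself note, no such matrix exists: any $A$ meeting both $S$ and its complement sparsely enough will $\varepsilon$-fragment whatever bounded-row matrix you write down. Your proposed rescue---iterating splittings and invoking the paving lemma again to control row sums---does not lead anywhere; the paving lemma plays no role in this direction.

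The paper's route is both simpler and different in kind. One does \emph{not} show that Rosenthal families are reaping; one shows that any $\mathcal F$ with $|\mathcal F|<\mathfrak r$ fails even to be a \emph{binary} Rosenthal family. The point is that $|\mathcal F|<\mathfrak r$ gives much more than ``some $S$ splits every $F$'': it gives the \emph{nowhere reaping} property, namely that below \emph{any} $B$ with $\{F\cap B:F\in\mathcal F\}\subseteq[\N]^\omega$ one can split again. Iterating this produces pairwise disjoint infinite sets $(B_n)_{n\in\N}$ such that $B_n\cap F$ is infinite for every $n$ and every $F\in\mathcal F$. Now a single fixed-point-free function $f:\N\to\N$ defined (essentially) by $f(k)=n$ for $k\in B_n$ satisfies $f[F]=\N$ for all $F\in\mathcal F$, so no $F$ is free for $f$. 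This is a $\{0,1\}$-matrix argument with no row-sum bookkeeping at all, and it yields $\mathfrak r\le\mathfrak{ros}_{01}\le\mathfrak{ros}$ directly. The moral: build \emph{one} object defeating all of $\mathcal F$ simultaneously, using the iterability of the splitting, rather than one matrix per candidate splitter $S$.
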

\vskip 6pt
 So we obtain quite a full picture
of the relationships between $\mathfrak{ros}$ and other cardinal invariants 
from the Cicho\'n's and van Douwen's diagrams which is well known
for $\mathfrak r$ and can be found e.g. in \cite{blass}. In particular 
$\mathfrak{ros}$ is bounded below by $\max(\mathsf{cov}(\mathcal M),
\mathsf{cov}(\mathcal N), \mathfrak{b})$ and  above by the {\sl ultrafilter number} $\mathfrak u$
i.e., the minimal size of a base of a nonprincipal ultrafilter on $\N$. Moreover,
it is consistent that the value of $\mathfrak{ros}$ is strictly bigger or strictly smaller
than many other known cardinal invariants.

One
of the side products of the proof of 
 inequality $\mathfrak{r}\leq\mathfrak{ros}$ in
Theorem \ref{main-theorem} which is obtained in Section 4 
is the result (Corollary \ref{filter}) saying that if 
 a Rosenthal family $\F$ has cardinality less then $\mathfrak{u}$, then 
 it fails to have the strong  finite intersection property, i.e. is far from being
  a generic filter. Note that
 $\mathfrak{ros}<\mathfrak u$ is consistent by Theorem \ref{main-theorem} and the main result from
 \cite{goldstern}.
The reason why  the inequality $\mathfrak{ros}\leq\mathfrak{r}$  holds is 
that  Rosenthal's lemma has a stronger version, 
apparently overlooked by many of its users, namely:
\vskip 6pt
\begin{reptheorem}[\ref{rosenthal-stronger} (\cite{bourgain, bourgain-tzafriri})\footnote{The
 relation between the statement of Theorem \ref{rosenthal-stronger} and  Theorem 1.3$'$ of 
 \cite{bourgain-tzafriri} which is not obvious will be discussed in Section 3 after the proof
 of Theorem \ref{rosenthal-stronger}.}]
For every $\varepsilon>0$ there is $l(\varepsilon)\in \N$ such that
for every Rosenthal matrix $M$  
there is a partition $\mathcal{P} = \{ P_i : 1\leq i \leq  l(\varepsilon) \}$ 
of $\N$, such that  $M$ is  $\varepsilon\|M\|_\infty$-fragmented  by $P_i$
   for every $1\leq i \leq  l(\varepsilon) $.  
\end{reptheorem}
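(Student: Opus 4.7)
The plan is to reduce Theorem \ref{rosenthal-stronger} to a finite-dimensional paving statement and then pass to $\N$ via a compactness argument. First I would normalize: dividing by $\|M\|_\infty$ we may assume $\|M\|_\infty=1$, and since the fragmentation condition involves only off-diagonal entries, we may further set $m_{k,k}=0$. Viewing $M$ as an operator $T_M$ on $\ell_\infty(\N)$, the non-negativity of the entries gives $\|T_M\|_{\ell_\infty\to\ell_\infty}=\sup_k\sum_j m_{k,j}=\|M\|_\infty$, and the assertion that $M$ is $\varepsilon$-fragmented by $A$ becomes $\|P_A T_M P_A\|_{\ell_\infty(A)\to\ell_\infty(A)}\le\varepsilon$. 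So the theorem amounts to an $\ell_\infty$-paving statement with a bound $l(\varepsilon)$ on the number of blocks that depends only on $\varepsilon$.

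Next I would establish the finite-dimensional version: there exists $l(\varepsilon)\in\N$ such that, for every $n$ and every $n\times n$ non-negative zero-diagonal matrix $N$ with $\sup_k\sum_{j\le n}n_{k,j}\le 1$, the set $\{1,\ldots,n\}$ admits a partition into $l(\varepsilon)$ blocks each of which $\varepsilon$-fragments $N$. This is the heart of the Bourgain--Tzafriri argument. The standard route is to separate entries into ``large'' ($n_{k,j}\ge\delta$) and ``small'' ones; the row-sum bound forces each row to contain at most $1/\delta$ large entries, so a greedy (or iterated) coloring places them into a bounded number of auxiliary blocks that separate large entries row-by-row. The small part is then partitioned uniformly at random into $\lceil 1/\varepsilon\rceil$ color classes: the expected restricted row sum in any class is $\le 1/\lceil1/\varepsilon\rceil$, and a Bernstein/Chernoff bound (using that each remaining entry is at most $\delta$, so the bounded-differences constant is small) together with a union bound over the $n$ rows yields, with positive probability, a coloring all of whose classes $\varepsilon$-fragment $N$.

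Finally, to pass from the finite to the infinite setting, apply the finite paving to each truncation $M_n=(m_{k,j})_{k,j\le n}$ to obtain a coloring $c_n:\{1,\ldots,n\}\to\{1,\ldots,l(\varepsilon)\}$. Extending each $c_n$ arbitrarily to $\N$ and invoking compactness of $\{1,\ldots,l(\varepsilon)\}^{\N}$ (or, equivalently, a non-principal ultrafilter on $\N$), extract a pointwise limit $c:\N\to\{1,\ldots,l(\varepsilon)\}$; its level sets form the desired partition $\mathcal{P}$. Indeed, for each $k$ and each finite $F\ni k$, the colorings $c_n$ eventually agree with $c$ on $F$, which, applied to $n\ge\max F$, forces $\sum_{j\in c^{-1}(c(k))\cap F,\ j\ne k}m_{k,j}\le\varepsilon$; letting $F$ exhaust $\N$ and using non-negativity gives the full row bound. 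The main obstacle is the finite paving itself: naive random coloring controls only the \emph{expected} row sums, so one genuinely needs the preprocessing of large entries to shrink the bounded-differences constants enough for concentration to beat the union bound over the $n$ rows. The translation between this $\ell_\infty$-formulation and the $\ell_1^n$-operator paving stated as Theorem 1.3$'$ in \cite{bourgain-tzafriri} is precisely what the footnote to Theorem \ref{rosenthal-stronger} signals.
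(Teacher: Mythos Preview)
Your reduction (normalize to $\|M\|_\infty=1$, zero the diagonal, prove a finite paving, pass to $\N$ by compactness of $\{1,\dots,l\}^\N$) is correct, and the compactness step matches what the paper does in Lemma~\ref{ult2}. The divergence is at the finite-dimensional paving, where the paper's argument is entirely different from, and far more elementary than, the probabilistic route you sketch.

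The paper does not invoke the Bourgain--Tzafriri machinery. It splits $M$ into its strictly lower-triangular and strictly upper-triangular parts $M_0,M_1$. For a lower-triangular matrix with $\|M_0\|_\infty\le 1$ one colors $\N$ greedily into $l$ classes: when a color is assigned to $j_0$, every nonzero entry $m_{j_0,n}$ has $n<j_0$ already colored, so by pigeonhole some class $P_i$ satisfies $\sum_{n\in P_i,\,n<j_0}m_{j_0,n}\le 1/l$; place $j_0$ there (Lemma~\ref{ult1}). The upper-triangular part is handled symmetrically by the same pigeonhole on finite sets, adding the \emph{minimum} element last, followed by a compactness limit (Lemma~\ref{ult2}). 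Intersecting the two $l$-partitions yields an $l^2$-partition each of whose blocks $(2/l)$-fragments $M$, so one may take $l(\varepsilon)=\lceil 2/\varepsilon\rceil^2$. No randomness, no concentration, no union bound.

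Your sketch of the finite step has a real gap. With a fixed threshold $\delta$, each entry of the ``small'' part is at most $\delta$, and Bernstein/Hoeffding gives a per-row failure probability of order $\exp(-c\varepsilon^2/\delta)$; this is a constant in $n$ and cannot survive a union bound over $n$ rows as $n\to\infty$. Forcing the union bound through requires $\delta\lesssim \varepsilon^2/\log n$, but then each row may carry $\sim(\log n)/\varepsilon^2$ large entries, and separating large entries row-by-row now needs order $\log n$ colors (the large-entry digraph has out-degree $\le 1/\delta$, hence chromatic number $O(1/\delta)$). The total number of pieces then depends on $n$, contrary to what is needed. The paving in \cite{bourgain,bourgain-tzafriri} is of course correct and even gives a sharper $l(\varepsilon)$ than the triangular proof, but its argument is more delicate than a single large/small split followed by one random coloring; your proposal as written does not close this loop. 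The triangular decomposition sidesteps the issue entirely because the greedy pigeonhole step handles each row deterministically with exactly $l$ colors.
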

\vskip 6pt
This has an immediate  corollary  which answers  Question 3.18 of \cite{damian-rosenthal}:
\vskip 6pt
\begin{reptheorem}[\ref{main-ultrafilter}]
 Every nonprincipal ultrafilter over $\N$ is a Rosenthal family.
In  particular, any $\pi$-base of any nonprincipal ultrafilter is a Rosenthal family.
\end{reptheorem}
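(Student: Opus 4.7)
The plan is to apply the stronger paving form of Rosenthal's lemma, Theorem \ref{rosenthal-stronger}, and then let the ultrafilter do the pigeonhole. To show that a given nonprincipal ultrafilter $\mathcal U$ on $\N$ is a Rosenthal family, I must verify that $\mathcal U\cap {Ros}_{M,\varepsilon}\neq\emptyset$ for every Rosenthal matrix $M\in\M$ and every $\varepsilon>0$.

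First I would rescale. Assume $\|M\|_\infty>0$, otherwise every infinite subset of $\N$ trivially $\varepsilon$-fragments $M$ and there is nothing to do. Set $\varepsilon'=\varepsilon/\|M\|_\infty$ and let $l=l(\varepsilon')$ be the natural number supplied by Theorem \ref{rosenthal-stronger}. Applying that theorem to $M$ and $\varepsilon'$, I obtain a finite partition $\N=P_1\cup\cdots\cup P_l$ such that each $P_i$ is $\varepsilon'\|M\|_\infty=\varepsilon$-fragmenting for $M$.

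Since $\mathcal U$ is an ultrafilter and the $P_i$ are pairwise disjoint, exactly one of them, say $P_{i_0}$, belongs to $\mathcal U$; because $\mathcal U$ is nonprincipal, $P_{i_0}$ is necessarily infinite. Hence $P_{i_0}\in \mathcal U\cap{Ros}_{M,\varepsilon}$, proving that $\mathcal U$ is generic for $\mathsf{Ros}$, i.e.\ a Rosenthal family. For the $\pi$-base statement I would use the obvious monotonicity of fragmenting: if $B\subseteq P_{i_0}$, then for every $k\in B$,
$$\sum_{n\in B\setminus\{k\}} m_{k,n}\leq \sum_{n\in P_{i_0}\setminus\{k\}} m_{k,n}\leq \varepsilon,$$
so $B\in {Ros}_{M,\varepsilon}$ as well. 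Given any $\pi$-base $\mathcal B$ of $\mathcal U$, pick $B\in\mathcal B$ with $B\subseteq P_{i_0}$; then $B\in {Ros}_{M,\varepsilon}\cap \mathcal B$, so $\mathcal B$ too is a Rosenthal family.

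There is essentially no genuine obstacle here: all the combinatorial and analytic content has been absorbed into Theorem \ref{rosenthal-stronger}, and the remaining argument is just the finite pigeonhole property of an ultrafilter together with the trivial monotonicity above. The one point to double-check is that $\mathcal U$ being nonprincipal is needed precisely to guarantee that the chosen piece $P_{i_0}$ is infinite, which is required since $\mathsf{Ros}$ is a collection of dense subsets of $[\N]^\omega$.
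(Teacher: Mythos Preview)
Your proof is correct and follows essentially the same approach as the paper: apply Theorem \ref{rosenthal-stronger} with $\varepsilon/\|M\|_\infty$ to get a finite partition into $\varepsilon$-fragmenting pieces, and use that an ultrafilter must contain one (necessarily infinite) piece. Your explicit monotonicity argument for the $\pi$-base case is a useful addition that the paper leaves implicit.
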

\vskip 6pt
Recall that a $\pi$-base of a nonprinipal ultrafilter $u$ is a family $\mathcal B\subseteq [\N]^\omega$
such that for every $A\in u$ there is $B\in \mathcal B$ such that $B\subseteq A$.
This result is a bit surprising at first sight because, as we mentioned before, it is consistent that there are no ultrafilters which are generic families
for  $\mathsf{Conv}$ or $\mathsf{Hom}$. 

A result like Theorem \ref{rosenthal-stronger},
where not only the density is 
asserted but actually $\N$ can be partition into sets belonging to the dense family in question will be called a {\sl paving lemma} in the analogy of to the paving conjectures equivalent to
the Kadison-Singer problem (\cite{bownik}). The remaining results presented in this paper consist
of determining generic cardinal invariants for certain collections of dense sets which  are natural subfamilies of $\mathsf{Ros}$. 
 Let us discuss these families and the results below.

Let $X, Y$ be some of the Banach spaces $c_0$ or $\ell_p$ for $1\leq p\leq\infty$. 
By $\mathcal B_0(X, Y)$
we denote the family of bounded linear operators from $X$ into $Y$ with zero diagonal,
i.e. such $T: X\rightarrow Y$  that $T(1_{\{n\}})(n)=0$ for every $n\in \N$; 
we write $\mathcal B_0(X)=\mathcal B_0(X,X)$. 
For $T\in \mathcal B_0 (X, Y)$ we define
$${Ros}_{T, \varepsilon}
=\{A\in [\N]^\omega: \|P_ATP_A\|\leq \varepsilon\|T\|\},$$
where $P_A: \ell_\infty\rightarrow \ell_\infty$ 
is given by $P_A(f)=f\cdot 1_A$ for all $f\in \ell_\infty$ and
where $1_A$ is the characteristic function of $A$. 
Let $\mathsf{Ros}(X, Y)=\{{Ros}_{T, \varepsilon}: T\in \mathcal B_0(X, Y), \varepsilon>0\}$.

It turns out that $\mathsf{Ros}=\mathsf{Ros}(c_0, \ell_\infty)$ (Proposition \ref{inclusions}
(\ref{ros=ros})) because
Rosenthal matrices correspond exactly to matrices of operators from $c_0$ to $\ell_\infty$
(Lemma \ref{operatorsc0linf})
and the $(\varepsilon\|T\|)$-fragmentation corresponds to the condition 
 in the definition of ${Ros}_{T, \varepsilon}$ (Lemma \ref{equiv-fragmenting}). In fact
the transposed matrices of Rosenthal matrices correspond exactly to
matrices of operators on $\ell_1$ (Lemmas \ref{operatorsc0linf} and \ref{operatorsl1}) and so we obtain
\vskip 6pt
\begin{reptheorem}[\ref{rosl1-main}]
$\mathfrak{ros}(\ell_1)=\mathfrak{r}$.
\end{reptheorem}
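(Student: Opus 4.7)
The plan is to reduce the computation of $\mathfrak{ros}(\ell_1)$ to Theorem \ref{main-theorem} by showing that the collections of dense sets $\mathsf{Ros}(\ell_1)$ and $\mathsf{Ros}$ coincide; the equality of their generic cardinal invariants then follows immediately from the definition of $\mathfrak{gen}$.

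First I would identify each $T \in \mathcal{B}_0(\ell_1)$ with its matrix $(a_{k,n})$ using Lemma \ref{operatorsl1}. Since on $\ell_1$ the operator norm is the supremum of the $\ell_1$-norms of the columns, $\|T\| = \sup_n \sum_k |a_{k,n}|$, and the matrix $M^T$ defined by $m^T_{k,n} = |a_{n,k}|$ is a Rosenthal matrix of zero diagonal with $\|M^T\|_\infty = \|T\|$. In the opposite direction, any Rosenthal matrix $M$ may be assumed to have zero diagonal without affecting any fragmentation condition, and then $a_{k,n} = m_{n,k}$ defines an operator in $\mathcal{B}_0(\ell_1)$ with $M^T = M$.

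The crux is then a routine index swap: since $a_{n,n} = 0$,
\[
\|P_A T P_A\| \;=\; \sup_{n \in A} \sum_{k \in A \setminus \{n\}} |a_{k,n}| \;=\; \sup_{k \in A} \sum_{n \in A \setminus \{k\}} m^T_{k,n},
\]
so $\|P_A T P_A\| \leq \varepsilon \|T\|$ is exactly the statement that $M^T$ is $\varepsilon \|M^T\|_\infty$-fragmented by $A$. In particular ${Ros}_{T, \varepsilon} = {Ros}_{M^T,\, \varepsilon \|M^T\|_\infty}$, and running the correspondence in both directions shows $\mathsf{Ros}(\ell_1) = \mathsf{Ros}$ as collections of dense subsets of $[\N]^\omega$.

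The argument is short and the only real obstacle is bookkeeping: one must verify that the absolute values and the transpose appearing in Lemma \ref{operatorsl1} line up with the symmetric placement of $P_A$ on either side of $T$ in the definition of ${Ros}_{T,\varepsilon}$, and that the operator norm on $\ell_1$ really ignores signs so that passing to $|a_{k,n}|$ costs nothing. Once this is confirmed and $\mathsf{Ros}(\ell_1) = \mathsf{Ros}$ is established, one concludes $\mathfrak{ros}(\ell_1) = \mathfrak{ros} = \mathfrak{r}$ by Theorem \ref{main-theorem}.
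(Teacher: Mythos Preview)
Your proposal is correct and follows essentially the same route as the paper: you establish $\mathsf{Ros}(\ell_1)=\mathsf{Ros}$ via the transpose/absolute-value correspondence between matrices of $\ell_1$-operators and Rosenthal matrices (this is exactly the content of Proposition~\ref{inclusions}(\ref{ros=ros}),(\ref{ros=rosl1}) and Lemma~\ref{equiv-fragmenting}), and then invoke Theorem~\ref{main-theorem}. The only cosmetic difference is that the paper passes through the intermediate identification $\mathsf{Ros}=\mathsf{Ros}(c_0,\ell_\infty)$ before transposing to $\ell_1$, whereas you go directly; the computations are the same.
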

\vskip 6pt
Clearly $\mathsf{Ros}(c_0)$ is a proper subfamily of
 $\mathsf{Ros}=\mathsf{Ros}(c_0, \ell_\infty)$ (Proposition \ref{inclusions} (\ref{rosc0inros})).
A generic family for $\mathsf{Ros}(c_0)$ will be called a $c_0$-Rosenthal family
and the generic cardinal number for $\mathsf{Ros}(c_0)$ will be denoted $\mathfrak{ros}(c_0)$.
We obtain the following:
\vskip 6pt
\begin{reptheorem}[\ref{rosc0-main}]
$\mathfrak{ros}({c_0})=\min(\mathfrak{d}, \mathfrak{r}).$
\end{reptheorem}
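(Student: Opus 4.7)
The plan is to prove the equality by matching upper and lower bounds. The upper bound $\mathfrak{ros}(c_0) \leq \mathfrak{r}$ is immediate from $\mathsf{Ros}(c_0) \subseteq \mathsf{Ros}$ (Proposition \ref{inclusions}) together with Theorem \ref{main-theorem}, so only $\mathfrak{ros}(c_0) \leq \dd$ requires a new argument. The key feature of $\mathcal{B}_0(c_0)$ is that columns vanish at infinity, which enables a ``recursive orbit'' construction not available for general matrices in $\mathsf{Ros}$. Fix a dominating family $\mathcal{D}$ of size $\dd$, and for each $f \in \mathcal{D}$ form the orbit $A_f=\{a_0^f<a_1^f<\dots\}$ with $a_0^f=0$ and $a_{n+1}^f=f(a_n^f)$. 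Let $\mathcal{G}$ consist of all cofinite tails of all such $A_f$, still of cardinality $\dd$. Given $T \in \mathcal{B}_0(c_0)$ and $\varepsilon>0$, define a rate function $g_{T,\varepsilon}\colon\N\to\N$ by letting $g_{T,\varepsilon}(n)$ be the least $m$ such that $\sum_{j\geq m}|t_{i,j}|<\varepsilon/2^{n+1}$ for all $i\leq n$ and $\sum_{i\leq n}|t_{k,i}|<\varepsilon/2^{n+1}$ for all $k\geq m$. The first condition uses row-summability and the second uses the $c_0$-column vanishing. Choosing $f\in\mathcal{D}$ with $f\geq^{*} g_{T,\varepsilon}$, a sufficiently late tail of $A_f$ lies in $\mathcal{G}$ and is $\varepsilon$-fragmented: for $k=a_m^f$ in the tail the forward sum $\sum_{i>m}|t_{a_m^f,a_i^f}|$ is controlled by the row-tail condition, and the backward sum $\sum_{i<m}|t_{a_m^f,a_i^f}|$ by the column-vanishing condition.

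For the lower bound $\mathfrak{ros}(c_0)\geq\min(\dd,\mathfrak{r})$, I assume $\mathcal{G}$ has cardinality $\kappa<\min(\dd,\mathfrak{r})$ and produce $T\in\mathcal{B}_0(c_0)$ and $\varepsilon>0$ with no $A\in\mathcal{G}$ $\varepsilon$-fragmenting $T$. Since $\kappa<\mathfrak{r}$, fix a splitter $S\subseteq\N$ with both $A\cap S$ and $A\setminus S$ infinite for every $A\in\mathcal{G}$. Enumerate $S=\{s_0<s_1<\cdots\}$ and for each $A\in\mathcal{G}$ let $\psi_A(n)=k$ where $s_k$ is the $n$-th element of $A\cap S$; then $\psi_A(n)\geq n$ measures the sparseness of $A\cap S$ inside $S$. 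Since $|\{\psi_A:A\in\mathcal{G}\}|\leq\kappa<\dd$, fix $h\colon\N\to\N$ not dominated by any $\psi_A$, so that for each $A$ there are infinitely many $n$ with $|A\cap\{s_0,\ldots,s_{h(n)-1}\}|>n$. From $S$ and $h$ I build $T$ as a finite-block matrix: partition $S$, and in parallel $\N\setminus S$, into finite blocks of sizes governed by $h$, and place weights so that rows have bounded $\ell_1$-norm and columns have finite support (hence lie in $c_0$). Non-domination of $h$ by $\psi_A$ then forces $A$ to be over-represented in some block, while the splitter guarantees the over-representation is witnessed by a row inside that block, violating $\varepsilon$-fragmentation.

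The principal obstacle is the lower bound: both $S$ and $h$ are genuinely needed, because in ZFC $\mathfrak{ros}(c_0)\leq\min(\dd,\mathfrak{r})$ and either of $\dd,\mathfrak{r}$ can strictly exceed $\mathfrak{ros}(c_0)$, so neither invariant alone can force ``non-$c_0$-Rosenthal''. A naive block-averaging matrix $t_{a,b}=1/|I_m|$ on a partition $(I_m)$ coming from $h$ alone yields only an averaged fragmentation estimate of the form $n<\varepsilon h(n)+n$, which is vacuous against the non-domination data. The construction must therefore interlock the block structure coming from $h$ with the splitter $S$ in a non-trivial way — for instance by pairing $S$-blocks with corresponding $(\N\setminus S)$-blocks under a non-uniform weighting, or by iterating the splitting argument inside $S$ using $\kappa<\mathfrak{r}$ — so that the non-domination translates into a \emph{pointwise} (rather than averaged) failure of $\varepsilon$-fragmentation at a row that the splitter locates, while the $c_0$-column condition is preserved by the finiteness of the blocks.
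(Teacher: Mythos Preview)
Your upper bound is correct and is essentially the paper's Proposition~\ref{rosc0-d}: the orbit $A_f=\{a_n^f\}$ of a dominating $f$ is precisely a set in which, for consecutive elements $k<n$, one has $\max(f_M,g_M)(k)<n$ in the paper's notation, and the forward/backward row-sum estimates you give match the paper's conditions (1) and (2) there.

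The lower bound, however, is not proved. Your second paragraph ends with a one-sentence assertion that a suitably weighted block matrix exists, and your third paragraph then concedes that the naive weighting gives only a vacuous averaged inequality and that the real construction ``must therefore interlock the block structure \dots in a non-trivial way'' --- without supplying that interlocking. There is also a structural reason to doubt the route as stated: from $\kappa<\mathfrak r$ you extract only a \emph{single} splitter $S$ of $\mathcal G$, and your function $h$ only tells you that each $A\cap S$ is occasionally dense in an initial segment of $S$. To defeat $\varepsilon$-fragmentation you need, for every $A$, a row index $k\in A$ whose row carries mass on $A\setminus\{k\}$; nothing in your data places such a $k$ near the dense portion of $A\cap S$, since the splitter says only that $A\setminus S$ is infinite, not where it sits relative to your blocks.

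The paper spends the hypothesis $\kappa<\mathfrak r$ very differently. Since $\mathfrak r=\mathfrak{ros}_{01}$ (Theorem~\ref{ros01-main}, which rests on the nowhere-reaping Lemma~\ref{hereditary}), one obtains outright a fixed-point-free $g\colon\N\to\N$ with $g[A]\cap A\neq\emptyset$ for every $A$ in the family and for every finite modification of such an $A$. Then $\kappa<\mathfrak d$ is used to find an interval partition so coarse that, for each $A$, some witness to $g[A]\cap A\neq\emptyset$ already occurs inside a single interval; redefining $g$ to be $i\mapsto i+1$ whenever $g$ would leave its own interval yields a finite-to-one $f$ still satisfying $f[A]\cap A\neq\emptyset$, i.e.\ a binary $c_0$-witness. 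This is Proposition~\ref{dr-ros01c0}, and combined with Proposition~\ref{inequalities}(\ref{ros01c0-rosc0}) it gives $\min(\mathfrak d,\mathfrak r)\leq\mathfrak{ros}_{01}(c_0)\leq\mathfrak{ros}(c_0)$. The point is that $\kappa<\mathfrak r$ should be cashed in for the full strength of $\mathfrak{ros}_{01}=\mathfrak r$, not for a single split.
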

\vskip 6pt

The paving result for operators on $\ell_2$ has been recently obtained in \cite{ks} 
which resolved the Kadison-Singer problem. This gives $\mathfrak{ros}(\ell_2)\leq \mathfrak{r}$
but we are left with:
\begin{question}
What is the value of $\mathfrak{ros}({\ell_2})$?
\end{question}
It  should be noted that the question if the paving lemma holds for
$\ell_p$ for $1<p<\infty$, $p\not=2$ is a known open problem (\cite{bownik}).
Another natural subcollection $\mathsf{Ros}_{01}$ of $\mathsf{Ros}$ consists of dense families
$${Ros}_{f}=\{A\in [\N]^\omega: f[A]\cap A=\emptyset\},$$
where $f:\N\rightarrow \N$ is a function with no fixed points. It can be easily seen that 
${Ros}_f=Ros_{M, 1/2}$,
where  $M=(m_{k, n})_{k, n\in \N}$ is a Rosenthal matrix, where
$m_{k, f(k)}=1$ and $m_{k, n}=0$ if $n\not=f(k)$ (see Section \ref{free}).
A generic family for $\mathsf{Ros}_{01}$ will be called a {\sl binary Rosenthal family}
and the generic cardinal invariant for $\mathsf{Ros}_{01}$ will be denoted by $\mathfrak{ros}_{01}$.
We obtain:
\vskip 6pt
\begin{reptheorem}[\ref{ros01-main}]
$\mathfrak{ros}_{01}=\mathfrak{r}.$
\end{reptheorem}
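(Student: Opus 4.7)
The plan is to prove $\mathfrak{ros}_{01}\leq\mathfrak{r}$ via the inclusion of dense families, and $\mathfrak{r}\leq\mathfrak{ros}_{01}$ by leveraging the lower bound argument from Theorem~\ref{main-theorem}.

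For the upper bound, observe that for every fixed-point-free $f:\N\to\N$ the dense set ${Ros}_f$ coincides with ${Ros}_{M,1/2}$, where $M$ is the binary Rosenthal matrix determined by $f$ (as noted in the introduction just above the statement). Hence $\mathsf{Ros}_{01}\subseteq\mathsf{Ros}$, every Rosenthal family is automatically a binary Rosenthal family, and Theorem~\ref{main-theorem} immediately yields $\mathfrak{ros}_{01}\leq\mathfrak{ros}=\mathfrak{r}$.

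For the lower bound, I would argue by contradiction. Suppose $\mathcal{F}$ is a binary Rosenthal family with $|\mathcal{F}|<\mathfrak{r}$; then $\mathcal{F}$ fails to be reaping, so there is $X\subseteq\N$ such that both $F\cap X$ and $F\setminus X$ are infinite for every $F\in\mathcal{F}$. The goal is to construct a fixed-point-free $f:\N\to\N$ with $f[F]\cap F\neq\emptyset$ for every $F\in\mathcal{F}$, contradicting $\mathcal{F}$ being binary Rosenthal. Enumerating $X=\{x_n\}$ and $\N\setminus X=\{y_n\}$ in increasing order and writing $I_F=\{n:x_n\in F\}$, $J_F=\{n:y_n\in F\}$, the task reduces to finding a function $\sigma:\N\to\N$ with $\sigma(I_F)\cap J_F\neq\emptyset$ for every $F$; then $f(x_n)=y_{\sigma(n)}$, together with an arbitrary fixed-point-free extension to $\N\setminus X$, is the required $f$.

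The main obstacle is precisely the construction of $\sigma$. A direct Baire-category argument in $\N^\N$ yields only $|\mathcal{F}|\geq\mathsf{cov}(\mathcal{M})$, which can be strictly smaller than $\mathfrak{r}$. To reach $\mathfrak{r}$ one must exploit iterated non-reaping: since $\{J_F:F\in\mathcal{F}\}$ again has cardinality below $\mathfrak{r}$, there is $Y_0$ splitting every $J_F$; re-applying non-reaping inside $Y_0$ and its complement produces a dyadic splitting tree, and a fusion-style diagonalization along this tree defines $\sigma$ so that every $F$ is caught at some level. Alternatively, and probably more efficiently, one may inspect the proof of $\mathfrak{r}\leq\mathfrak{ros}$ in Theorem~\ref{main-theorem}: if the Rosenthal matrix produced there can always be taken to be binary with exactly one nonzero entry per row (equivalently, to be the matrix of a fixed-point-free function), then it already belongs to $\mathsf{Ros}_{01}$ and the bound $\mathfrak{r}\leq\mathfrak{ros}_{01}$ follows at once from that argument.
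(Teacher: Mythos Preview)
Your upper bound is correct and identical to the paper's: the inclusion $\mathsf{Ros}_{01}\subseteq\mathsf{Ros}$ gives $\mathfrak{ros}_{01}\leq\mathfrak{ros}=\mathfrak{r}$ via Theorem~\ref{main-theorem}.

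For the lower bound, your second suggestion is exactly what the paper does, and there is no circularity to worry about: the paper proves $\mathfrak{r}\leq\mathfrak{ros}$ \emph{through} $\mathfrak{ros}_{01}$. Concretely, Proposition~\ref{r-ros01} establishes $\mathfrak{r}\leq\mathfrak{ros}_{01}$ directly, via Lemma~\ref{hereditary}, and Theorem~\ref{main-theorem} then just chains this with $\mathfrak{ros}_{01}\leq\mathfrak{ros}\leq\mathfrak{r}$. So the ``Rosenthal matrix produced there'' is indeed binary by design.

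The construction in Lemma~\ref{hereditary} is simpler than your dyadic-tree-plus-fusion outline, and it is worth seeing why. Rather than a single split $X$ followed by a tree of further splits, the paper iterates the non-reaping hypothesis linearly: starting from any family $\mathcal{F}$ of size $<\mathfrak{r}$ (hence \emph{nowhere reaping}), one builds pairwise disjoint infinite sets $B_0,B_1,B_2,\ldots$ such that every $F\in\mathcal{F}$ meets every $B_n$ in an infinite set. Then define $f(k)=n$ for $k\in B_n\setminus\{n\}$ (and arbitrarily without fixed points elsewhere). This yields the much stronger conclusion $f[F]=\N$ for every $F\in\mathcal{F}$, so in particular $f[F]\cap F\neq\emptyset$. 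Your detour through $X$, $\N\setminus X$, the auxiliary index sets $I_F,J_F$, and a fusion along a splitting tree would also work once fleshed out, but it reproduces this same iterated-splitting idea in a more cumbersome packaging; the linear sequence of $B_n$'s replaces the tree and makes the definition of $f$ immediate.
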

\vskip 6pt
 The last type of generic families we consider comes from combining 
 $c_0$-Rosenthal families and binary Rosenthal families. In fact, Rosenthal matrices
 which correspond to  elements of $\mathcal B_0(c_0)$ are exactly Rosenthal matrices whose
 columns converge to zero (Lemma \ref{operatorsc0}). 
 If we consider binary Rosenthal matrices $M$ such that
 $\mathsf{Ros}_{01}\ni {Ros}_f={Ros}_{M, 1/2}$, then one sees that the condition that the columns converge to zero translates to $f$ being finite-to-one. So we define 
 $\mathsf{Ros}_{01}(c_0)$ as the collection of families
 ${Ros}_f$ where $f:\N\rightarrow \N$ is a finite-to-one function with no fixed points.
  A generic family for $\mathsf{Ros}_{01}(c_0)$ will be called a {\sl binary $c_0$-Rosenthal family}
and the generic cardinal invariant for $\mathsf{Ros}_{01}(c_0)$ will
 be denoted by $\mathfrak{ros}_{01}(c_0)$.
We obtain:
\vskip 6pt
\begin{reptheorem}[\ref{ros01c0-main}]
$\mathfrak{ros}_{01}(c_0)=\min(\mathfrak d, \mathfrak{r}).$
\end{reptheorem}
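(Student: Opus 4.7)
The plan is to obtain this theorem as the common refinement of Theorems \ref{ros01-main} and \ref{rosc0-main}, with the upper bound coming almost for free and the content concentrated in the lower bound.

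For $\mathfrak{ros}_{01}(c_0)\leq\min(\mathfrak{d},\mathfrak{r})$, I first observe the inclusion $\mathsf{Ros}_{01}(c_0)\subseteq\mathsf{Ros}(c_0)$: if $f\colon\N\to\N$ is finite-to-one and fixed-point-free, its $0$--$1$ matrix $M_f$, defined by $(M_f)_{k,f(k)}=1$ and $0$ otherwise, satisfies $\|M_f\|_\infty=1$ and has finitely supported columns, so by Lemma \ref{operatorsc0} it corresponds to an operator in $\mathcal{B}_0(c_0)$; moreover Lemma \ref{equiv-fragmenting} gives ${Ros}_f={Ros}_{M_f,1/2}$. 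Hence every $c_0$-Rosenthal family is a binary $c_0$-Rosenthal family, and Theorem \ref{rosc0-main} yields $\mathfrak{ros}_{01}(c_0)\leq\mathfrak{ros}(c_0)=\min(\mathfrak{d},\mathfrak{r})$.

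For the reverse inequality I would take $\mathcal{F}\subseteq[\N]^\omega$ with $|\mathcal{F}|<\mathfrak{d}$ and $|\mathcal{F}|<\mathfrak{r}$ and construct a finite-to-one fixed-point-free $f$ with $f[A]\cap A\neq\emptyset$ for every $A\in\mathcal{F}$. The $\mathfrak{d}$-bound provides an increasing $h\colon\N\to\N$ not dominated by any of the functions $n\mapsto e_A(2n)$, where $e_A$ enumerates $A$; setting $J_k=[h(k-1),h(k))$, the inequality $|A\cap[0,h(n))|\geq 2n+1$, holding for infinitely many $n$, forces $T_A=\{k:|A\cap J_k|\geq 2\}$ to be infinite for each $A\in\mathcal{F}$, by a pigeonhole on the $n+1$ intervals $J_0,\dots,J_n$. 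The $\mathfrak{r}$-bound would then be used, via iterated failure of reaping applied to suitable coded families of subsets of the $J_k$'s, to pre-select for each $k$ a pair $\{a_k,b_k\}\subseteq J_k$ such that for every $A\in\mathcal{F}$ one has $\{a_k,b_k\}\subseteq A$ for infinitely many $k$. Defining $f(a_k)=b_k$, $f(b_k)=a_k$, and extending $f$ on $J_k\setminus\{a_k,b_k\}$ by any fixed-point-free self-map of $J_k$, one obtains an $f$ which is finite-to-one (since $f(J_k)\subseteq J_k$ and each $J_k$ is finite) and which satisfies $f[A]\cap A\neq\emptyset$ for every $A\in\mathcal{F}$.

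The main obstacle is the pair pre-selection in the second step: the mere fact that $|A\cap J_k|\geq 2$ does not reveal which two elements of $J_k$ belong to $A$, so a blind pairing can miss $A\cap J_k$ entirely. The pairs must be steered by a reaping argument applied to families encoding the positions of elements of $A$ inside the $J_k$'s, which is the same technical core as in the lower bound of Theorem \ref{rosc0-main}. In fact, I would expect that a minor modification of the construction proving $\mathfrak{ros}(c_0)\geq\min(\mathfrak{d},\mathfrak{r})$ produces a bad operator of the form $M_f$ for some finite-to-one fixed-point-free $f$, which would reduce the present lower bound to a cosmetic adjustment of that proof.
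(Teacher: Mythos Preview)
Your upper bound is correct and matches the paper exactly: the inclusion $\mathsf{Ros}_{01}(c_0)\subseteq\mathsf{Ros}(c_0)$ together with Theorem~\ref{rosc0-main} gives $\mathfrak{ros}_{01}(c_0)\leq\mathfrak{ros}(c_0)=\min(\mathfrak d,\mathfrak r)$.

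The lower bound, however, is not a proof. You correctly isolate the obstacle---the pair pre-selection inside the intervals $J_k$---but you do not overcome it; you only assert that ``a reaping argument applied to families encoding the positions'' should work, without saying what that argument is. More seriously, your fallback plan is circular in the paper's logical structure: you propose to adapt the construction proving $\mathfrak{ros}(c_0)\geq\min(\mathfrak d,\mathfrak r)$, but in the paper that inequality is obtained precisely from Proposition~\ref{dr-ros01c0}, i.e.\ from $\min(\mathfrak d,\mathfrak r)\leq\mathfrak{ros}_{01}(c_0)$ together with $\mathfrak{ros}_{01}(c_0)\leq\mathfrak{ros}(c_0)$. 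So the ``technical core'' you want to borrow is the very statement you are trying to prove.

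The paper's route (Proposition~\ref{dr-ros01c0}) reverses your order of use of $\mathfrak d$ and $\mathfrak r$, and this is what makes it go through cleanly. One first invokes $\kappa<\mathfrak r=\mathfrak{ros}_{01}$ (Theorem~\ref{ros01-main}) to get a fixed-point-free $g:\N\to\N$, not yet finite-to-one, with $g[A]\cap A\neq\emptyset$ for every $A\in\mathcal F$. Then, for each $A$, one records an interval partition $\mathcal J^A$ so fine that inside every interval $g$ already sends some point of $A$ to a point of $A$ in the same interval. Since $\kappa<\mathfrak d$, a single interval partition $\mathcal I$ dominates all the $\mathcal J^A$; defining $f(i)=g(i)$ when $i$ and $g(i)$ lie in the same block of $\mathcal I$, and $f(i)=i+1$ otherwise, yields a finite-to-one fixed-point-free $f$ with $f[A]\cap A\neq\emptyset$ for every $A$. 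The point is that the ``bad'' function $g$ is produced first and then localized; no selection of pairs inside prescribed intervals is ever needed.
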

\vskip 6pt
However we do not know the answer to the following:
\begin{question} What is the value of the generic cardinal invariant for the family
$$\mathsf{Ros}_{01}^{1-1}=\{{Ros}_f: f:\N\rightarrow \N\ 
\hbox{ is a one-to-one function with no fixed points}\}?$$
\end{question}

Let us describe the structure of of the paper.
Section 2 is devoted to proving some of the above claims concerning the relations 
between Rosenthal matrices and  linear bounded operators, functions without fixed points and
sequences of measures.
In section 3 we discuss versions of Theorem \ref{rosenthal-stronger} present in the literature and we prove it and conclude Theorem \ref{main-ultrafilter}. Section 4 is devoted to applications of nowhere reaping families which together
with the results of Section 3 give main results on the values of $\mathfrak{ros}$ and
$\mathfrak{ros}_{01}$. In section 5 we calculate $\mathfrak{ros}(c_0)$ and
$\mathfrak{ros}_{01}(c_0)$. Set-theoretic terminology is based on \cite{blass}. Terminology
concerning linear operators is introduced at the beginning of Section 2.

\section{Rosenthal matrices and families}

\subsection{Rosenthal matrices and linear bounded operators}

If $\K$ is either the field $\C$ of complex numbers or
the field $\R$ of the reals, we will consider the Banach space 
$\ell_\infty(\K) = \{ f \in \K^\N : f \mbox{ is bounded} \}$ 
with the supremum norm $||f||_\infty = \sup \{ |f(n)| : n \in \N \}$ 
and its subspace $c_0(\K) = \{ f \in \ell_\infty(\K) : \lim_{n \rightarrow \infty} f(n) = 0 \}$
as well as the spaces $\ell_p(\K)=
\{f\in \K^\N: \Sigma_{n\in \N}|f(n)|^p<\infty\}$ with the $p$-norm
$\|f\|_p= \big(\Sigma_{n\in \N}|f(n)|^p\big)^{1\over p}$, where $1\leq p<\infty$. 
We will also mention the finite-dimensional versions $\ell_\infty^n(\K)$,
$c_0^n(\K)$, $\ell_p^n(\K)$ for $n\in \N$ and $1\leq p<\infty$.
Note that $\ell_\infty^n$ is the same as $c_0^n$ for each $n\in \N$ .

We will skip
the specification of  the field $\K$, that is we will use
$\ell_\infty$, $c_0$, $\ell_p$, etc.,  as all of our arguments work for both cases.
Linear operators $T :X\rightarrow Y$
 between Banach spaces $(X, \| \ \|_{X}), (Y, \|\ \|_{Y})$ will 
  be considered with the operator norm, i.e., $\|T\|=\sup\{\|T(x)\|_Y/ \|x\|_X: x\in X\}$.
  When dealing with finite or infinite matrices we will specify 
  the norms $\|\ \|_\infty$ or $\|\ \|_1$ which are defined in the Lemmas 
  \ref{operatorsc0linf} and \ref{operatorsl1}.
Recall from the introduction that $1_A$ denotes the characteristic function of a set $A$.

We will need the following three elementary and well known lemmas on infinite matrices.
We provide the proofs for the convenience of the reader:

\begin{lemma}\label{operatorsc0linf}
Every matrix $M=(m_{k, n})_{k, n\in \N}$ satisfying
$sup\{\Sigma_{n\in \N}|m_{k, n}|: k\in \N\}=\|M\|_\infty<\infty$ defines
a linear bounded operator $T_M: c_0\rightarrow \ell_\infty$ satisfying
$$T((a_n)_{n\in \N})(k)=\Sigma_{n\in \N}a_nm_{k, n}$$ 
for each $k\in \N$.
The operator $T_M$ has norm $\|M\|_\infty$. Moreover, each bounded linear operator from
$c_0$ into  $\ell_\infty$ is of this form.
\end{lemma}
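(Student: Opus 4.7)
The lemma has two directions: given a matrix with bounded row-sums, build a bounded operator $c_0\to\ell_\infty$ of the claimed norm; and, conversely, read off such a matrix from any bounded operator. The plan is to handle the forward direction in three small steps and then do the converse using the standard unit vectors.

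For the forward direction I would first check that, for any $(a_n)\in c_0$ and any $k$, the series $\Sigma_n a_n m_{k,n}$ converges absolutely, because $|a_n m_{k,n}|\leq \|a\|_\infty|m_{k,n}|$ and $\Sigma_n|m_{k,n}|\leq\|M\|_\infty$. This also gives the pointwise bound $|T_M(a)(k)|\leq\|M\|_\infty\|a\|_\infty$, which after taking supremum over $k$ shows that $T_M(a)\in\ell_\infty$ and $\|T_M\|\leq\|M\|_\infty$; linearity of $T_M$ is immediate. For the reverse inequality, fix $k$ and $N$, and take
\[
a^{(N)}_n=\begin{cases}\overline{\sgn(m_{k,n})}& n\leq N,\\ 0& n>N,\end{cases}
\]
where $\sgn(z)=z/|z|$ for $z\neq 0$ and $0$ otherwise. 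Then $a^{(N)}\in c_0$ with $\|a^{(N)}\|_\infty\leq 1$, and $T_M(a^{(N)})(k)=\Sigma_{n\leq N}|m_{k,n}|$, so $\|T_M\|\geq\Sigma_{n\leq N}|m_{k,n}|$. Letting $N\to\infty$ and then taking the supremum over $k$ yields $\|T_M\|\geq\|M\|_\infty$.

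For the converse, given a bounded $T:c_0\to\ell_\infty$, I define $m_{k,n}=T(e_n)(k)$ with $e_n=1_{\{n\}}$. Applying $T$ to the vectors $a^{(N)}$ constructed exactly as above (with these $m_{k,n}$) shows $\Sigma_{n\leq N}|m_{k,n}|\leq\|T(a^{(N)})\|_\infty\leq\|T\|$, so $\|M\|_\infty\leq\|T\|<\infty$. It remains to check $T=T_M$. On finitely supported vectors this is immediate from linearity. For an arbitrary $a\in c_0$ the partial sums $s_N=\Sigma_{n\leq N}a_n e_n$ converge to $a$ in $c_0$-norm (this is exactly what $a\in c_0$ means), hence $T(s_N)\to T(a)$ in $\ell_\infty$, and in particular pointwise at every coordinate $k$. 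Since $T(s_N)(k)=\Sigma_{n\leq N}a_n m_{k,n}$ tends to $T_M(a)(k)$ by absolute convergence, the two operators agree coordinatewise, and thus as maps.

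No real obstacle is expected; the only mildly delicate point is the choice of unimodular coefficients $\overline{\sgn(m_{k,n})}$ to realise the row-sum norm by test vectors in the unit ball of $c_0$, and the use of norm-convergence of the partial sums of $a\in c_0$ (rather than the stronger uniform control in $k$ that would be needed for $\ell_\infty$-vectors) to identify $T$ with $T_M$ via pointwise limits.
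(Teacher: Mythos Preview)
Your proof is correct and follows essentially the same route as the paper's. The one small difference is in the converse direction: to bound $\sum_n|m_{k,n}|$ by $\|T\|$, the paper invokes duality (the functional $\delta_k\circ T$ on $c_0$ lies in $c_0^*=\ell_1$ with norm at most $\|T\|$), whereas you reuse the finitely supported unimodular test vectors from the forward direction; your argument is slightly more elementary but amounts to the same computation.
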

\begin{proof} The requirement concerning $M$ implies that $T_M$ is well-defined on $c_0$
into $\ell_\infty$. It is clear that $T_M$ is linear. If $(a_n)_{n\in \N}\in c_0$,
then $|T_M((a_n)_{n\in \N})(k)|\leq M_1\|(a_n)_{n\in \N}\|$, so
$T_M$ is a bounded operator and $\|T_M\|\leq\|M\|_\infty$. Having fixed
$k, i\in \N$ by taking
numbers $a_n$ such that $a_nm_{k,n}=|m_{k, n}|$ for $n\leq i$ and $a_n=0$ for $n>i$ 
we get that 
$\|T_M((a_n)_{n\in \N})\|\geq\Sigma_{n\leq i}|m_{k, n}|$ and so $\|M\|_\infty\leq \|T_M\|$.

Given any $T:c_0\rightarrow \ell_\infty$ define $m_{k, n}=T(1_{\{n\}})(k)$.
As $T^*(\delta_k)=\delta_k\circ S$ is a linear bounded functional on $c_0$ of norm
not bigger than $\|T\|$, it must be
in $\ell_1=c_0^*$. So $sup\{\Sigma_{n\in \N}|m_{k, n}|: k\in \N\}=\|M\|_\infty\leq \|T\|$.
As the span of $\{1_{\{n\}}: n\in \N\}$ is dense in $c_0$, we obtain 
$T((a_n)_{n\in \N})(k)=\Sigma_{n\in \N}a_nm_{k, n}$ for each $(a_n)_{n\in \N}\in c_0$. 
\end{proof}

\begin{lemma}\label{operatorsl1}
Every matrix $M=(m_{k, n})_{k, n\in \N}$ satisfying
$sup\{\Sigma_{k\in \N}|m_{k, n}|: n\in \N\}=\|M\|_1<\infty$ defines
a linear bounded operator $T_M: \ell_1\rightarrow \ell_1$ satisfying
$$T((a_n)_{n\in \N})(k)=\Sigma_{n\in \N}a_nm_{k, n}$$ 
for each $k\in \N$.
The operator $T_M$ has norm $\|M\|_1$. Moreover, each bounded linear
operator on $\ell_1$ is of this form.
\end{lemma}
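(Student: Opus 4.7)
The plan is to mirror the proof of Lemma~\ref{operatorsc0linf}, replacing the duality $(c_0)^* = \ell_1$ by the duality on $\ell_1$ directly, and using that a row sum becomes a column sum when the target space is $\ell_1$ rather than $\ell_\infty$.

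First I would verify that $T_M$ is well-defined from $\ell_1$ into $\ell_1$ with $\|T_M\| \leq \|M\|_1$. Given $(a_n)_{n\in\N} \in \ell_1$, one observes that $|m_{k,n}| \leq \|M\|_1$ for all $k,n$, so the series defining $T_M((a_n)_{n\in\N})(k) = \sum_n a_n m_{k,n}$ is absolutely convergent. To see that the resulting sequence lies in $\ell_1$, apply Tonelli to swap the order of summation:
\[
\sum_{k\in\N} |T_M((a_n)_{n\in\N})(k)| \leq \sum_{k\in\N}\sum_{n\in\N} |a_n||m_{k,n}| = \sum_{n\in\N} |a_n|\sum_{k\in\N}|m_{k,n}| \leq \|M\|_1 \|(a_n)_{n\in\N}\|_1.
\]
Linearity of $T_M$ is immediate, and the above bound gives $\|T_M\| \leq \|M\|_1$.

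Next I would establish the reverse bound $\|M\|_1 \leq \|T_M\|$ by testing $T_M$ on the unit vectors: for fixed $n_0\in\N$, one has $T_M(1_{\{n_0\}})(k) = m_{k,n_0}$, hence $\|T_M(1_{\{n_0\}})\|_1 = \sum_{k\in\N}|m_{k,n_0}|$, and taking the supremum over $n_0$ yields $\|M\|_1 \leq \|T_M\|$.

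For the "moreover" part I would argue exactly as in Lemma~\ref{operatorsc0linf}: given any bounded $T\colon \ell_1 \to \ell_1$, define $m_{k,n} = T(1_{\{n\}})(k)$. Then $\|T(1_{\{n\}})\|_1 = \sum_k |m_{k,n}| \leq \|T\|$, so $\|M\|_1 \leq \|T\| < \infty$ and the matrix defines an operator $T_M$ as above. Because $\{1_{\{n\}} : n \in \N\}$ spans a dense subspace of $\ell_1$ and both $T$ and $T_M$ are bounded, it suffices to check $T = T_M$ on each $1_{\{n\}}$, which holds by construction. The only point requiring a little care is the evaluation $T((a_n)_{n\in\N})(k) = \sum_n a_n m_{k,n}$ for a general $(a_n)_{n\in\N} \in \ell_1$; this follows because $(a_n)_{n\in\N} = \sum_n a_n 1_{\{n\}}$ converges in the $\ell_1$-norm, so by continuity of $T$ and of the coordinate evaluation we may interchange the sum with $T(\cdot)(k)$. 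The main (very mild) obstacle is just ensuring the Fubini/Tonelli swap is justified, which it is by the absolute convergence guaranteed by $\|M\|_1 < \infty$.
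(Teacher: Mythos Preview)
Your proof is correct and follows essentially the same approach as the paper's: both use the entrywise bound $|m_{k,n}|\le\|M\|_1$ to justify convergence of each coordinate, the Tonelli swap to obtain $\|T_M\|\le\|M\|_1$, evaluation on the unit vectors $1_{\{n\}}$ for the reverse inequality, and density of their span in $\ell_1$ for the ``moreover'' part. Your version merely spells out the continuity argument in slightly more detail.
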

\begin{proof} The requirement concerning $M$ implies that 
the rows of $M$ have $\ell_\infty$-norms bounded by $\|M\|_1$ as well,
so $T_M$ is well-defined on $\ell_1$ and is sending
norm one elements of $\ell_1$ into sequences bounded by $\|M\|_1$. 
It is clear that $T_M$ is linear.
$$\Sigma_{k\in \N}|T_M((a_n)_{n\in \N})(k)|\leq \Sigma_{k\in \N}\Sigma_{n\in \N}|a_n||m_{k, n}|=$$
$$=\Sigma_{n\in \N}\big(|a_n|\Sigma_{k\in \N}|m_{k, n}|\big)\leq \|M\|_1\|(a_n)_{n\in \N}\|.$$
So $T_M:\ell_1\rightarrow\ell_1$ and $\|T_M\|\leq \|M\|_1$.
 We have that
$\|T_M((1_{\{n\}})_{n\in \N})\|=\Sigma_{k\in \N}|m_{k, n}|$ and so $\|M\|_1\leq \|T_M\|$.

Given any $T:\ell_1\rightarrow \ell_1$ define $m_{k, n}=T(1_{\{n\}})(k)$.
As $T_M((1_{\{n\}})_{n\in \N})=(m_{k, n})_{n\in \N}$, it follows that
$sup\{\Sigma_{k\in \N}|m_{k, n}|: n\in \N\}\leq \|T_M\|$. As the span of $\{1_{\{n\}}: n\in \N\}$
 is dense in $\ell_1$, we obtain 
$T((a_n)_{n\in \N})(k)=\Sigma_{n\in \N}a_nm_{k, n}$ for each $(a_n)_{n\in \N}\in \ell_1$. 
\end{proof}

\begin{lemma}\label{operatorsc0}
Every matrix $M=(m_{k, n})_{k, n\in \N}$ satisfying
$sup\{\Sigma_{n\in \N}|m_{k, n}|: k\in \N\}=\|M\|_\infty<\infty$ 
and $\lim_{k\rightarrow \infty}m_{k, n}=0$ for each $n\in \N$ defines
a linear bounded operator $T_M: c_0\rightarrow c_0$ satisfying
$$T((a_n)_{n\in \N})(k)=\Sigma_{n\in \N}a_nm_{k, n}$$ 
for each $k\in \N$.
The operator $T_M$ has norm $\|M\|_\infty$. Moreover, each bounded linear operator on
$c_0$  is of this form.
\end{lemma}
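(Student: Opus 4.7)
The plan is to piggyback on Lemma \ref{operatorsc0linf}: the two assertions there (well-definedness of $T_M$ as an operator into $\ell_\infty$ with norm $\|M\|_\infty$, and the representation of any bounded operator out of $c_0$ by its matrix of values on the unit vectors) transfer immediately; the only new content is verifying that under the extra column-convergence hypothesis the range of $T_M$ lies inside $c_0$, and conversely that an operator into $c_0$ forces the columns of its matrix to converge to zero.

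First, given $M$ with $\|M\|_\infty < \infty$, Lemma \ref{operatorsc0linf} already yields that $T_M((a_n)_{n\in\N})(k) = \sum_{n\in\N} a_n m_{k,n}$ defines a bounded linear operator $T_M \colon c_0 \to \ell_\infty$ of norm exactly $\|M\|_\infty$. So the only thing to check is that $T_M((a_n)_{n\in\N}) \in c_0$ whenever $(a_n)_{n\in\N} \in c_0$. Fix such a sequence and $\varepsilon > 0$, and choose $N\in\N$ with $|a_n| \leq \varepsilon$ for $n \geq N$. Split
$$|T_M((a_n)_{n\in\N})(k)| \leq \sum_{n < N}|a_n||m_{k,n}| + \sum_{n \geq N}|a_n||m_{k,n}| \leq \sum_{n<N}|a_n||m_{k,n}| + \varepsilon\|M\|_\infty.$$
The first sum is a finite sum, and each of its (finitely many) summands tends to $0$ as $k\to\infty$ by the hypothesis $\lim_{k\to\infty} m_{k,n} = 0$; hence $\limsup_{k\to\infty}|T_M((a_n)_{n\in\N})(k)| \leq \varepsilon\|M\|_\infty$. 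Since $\varepsilon$ was arbitrary, the limit is $0$, so $T_M((a_n)_{n\in\N}) \in c_0$, as required.

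For the converse, given any bounded linear $T\colon c_0 \to c_0$, compose with the inclusion $c_0 \hookrightarrow \ell_\infty$ and apply Lemma \ref{operatorsc0linf} to the resulting operator $c_0 \to \ell_\infty$. This produces the matrix $m_{k,n} = T(1_{\{n\}})(k)$ with $\|M\|_\infty \leq \|T\|$ and the representation formula $T((a_n)_{n\in\N})(k) = \sum_{n\in\N} a_n m_{k,n}$. The extra property $\lim_{k\to\infty} m_{k,n} = 0$ for each fixed $n$ is then immediate, since by hypothesis $T(1_{\{n\}}) \in c_0$ and the $k$-th coordinate of this vector is precisely $m_{k,n}$.

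No real obstacle is expected here; the entire content is the $\varepsilon/2$-style argument ensuring the range lands in $c_0$, which relies on the uniform row-sum bound to control the tail and on finiteness to exploit the column-convergence hypothesis on the head. The only mild care needed is in the order of quantifiers: one must split the series at an $N$ depending on $(a_n)_{n\in\N}$ before letting $k$ tend to infinity.
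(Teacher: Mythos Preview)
Your proof is correct and follows essentially the same structure as the paper's: both reduce to Lemma~\ref{operatorsc0linf} for the norm computation and the matrix representation, and both verify the extra column condition via $T(1_{\{n\}})=(m_{k,n})_{k\in\N}\in c_0$. The only minor difference is that you give an explicit $\varepsilon$-splitting estimate to show $T_M$ lands in $c_0$, whereas the paper (implicitly) uses that $T_M$ sends each basis vector $1_{\{n\}}$ into $c_0$ and then appeals to density of their span together with boundedness of $T_M$ and closedness of $c_0$ in $\ell_\infty$.
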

 \begin{proof}
 By Lemma \ref{operatorsc0linf} the operator $T_M:c_0\rightarrow\ell_\infty$
 has norm equal to $\|M\|_\infty$. $T_M(1_{\{n\}})=(m_{k, n})_{k\in \N}\in c_0$
 so $\lim_{k\rightarrow \infty}m_{k, n}=0$. For the moreover part use again
 Lemma \ref{operatorsc0linf} to conclude that any operator on $c_0$
 is given as in Lemma \ref{operatorsc0linf}. The same argument
 concerning $T_M(1_{\{n\}})$ as above yields $\lim_{k\rightarrow \infty}m_{k, n}=0$.
 \end{proof}
 
 Before we note the relations between various generic families we need one more observation:
 
 \begin{lemma}\label{equiv-fragmenting}
  Suppose that $M$ is a Rosenthal matrix with zero diagonal and $A\subseteq \N$
 is infinite.
 $M$ is $\varepsilon$-fragmented by $A$ if and only if $\|M_AMM_A\|_\infty\leq \varepsilon$,
 where $M_A=(m_{k, n})_{k,n\in \N}$ is the diagonal matrix  satisfying
 $m_{k, k}=1$ if $k\in A$ and $m_{k, k}=0$ otherwise.
 \end{lemma}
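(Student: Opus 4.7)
The plan is to unwind both sides into concrete row-sum expressions and see that they coincide entry by entry, using only the zero-diagonal and non-negativity hypotheses on $M$.

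First I would compute the matrix product $M_A M M_A$. Since $M_A$ is diagonal with $(M_A)_{i,i}=1_A(i)$, the $(k,j)$-entry is simply
$$ (M_AMM_A)_{k,j} = 1_A(k)\, m_{k,j}\, 1_A(j).$$
Thus rows indexed by $k\notin A$ vanish, while for $k\in A$ the $k$-th row agrees with $m_{k,j}$ on coordinates $j\in A$ and is zero elsewhere. Using the definition of the $\|\cdot\|_\infty$ matrix norm from Lemma \ref{operatorsc0linf} (supremum of row-sums of absolute values), and the fact that the entries $m_{k,j}$ of a Rosenthal matrix are non-negative, I get
$$ \|M_AMM_A\|_\infty \;=\; \sup_{k\in A}\, \sum_{j\in A} m_{k,j}.$$

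Now I would invoke the zero-diagonal assumption $m_{k,k}=0$ to rewrite $\sum_{j\in A}m_{k,j}=\sum_{j\in A\setminus\{k\}}m_{k,j}$ for every $k\in A$. Comparing with the definition of $\varepsilon$-fragmentation (condition (*) in Lemma \ref{rosenthal-intro}), the inequality $\|M_AMM_A\|_\infty\leq\varepsilon$ becomes precisely the statement that $\sum_{n\in A\setminus\{k\}}m_{k,n}\leq\varepsilon$ for all $k\in A$, i.e., that $M$ is $\varepsilon$-fragmented by $A$. Since the proof is a direct unpacking of definitions, there is no real obstacle here; the only point requiring any care is keeping track of which indices come from the left $M_A$ (selecting rows in $A$) and which from the right $M_A$ (selecting columns in $A$), and noting that the zero-diagonal assumption is what allows the sum over $j\in A$ to be identified with the sum over $j\in A\setminus\{k\}$ appearing in the fragmentation condition.
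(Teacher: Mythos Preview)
Your proof is correct and follows essentially the same approach as the paper: both arguments unwind the definitions, identifying $\|M_AMM_A\|_\infty$ with $\sup_{k\in A}\sum_{n\in A}m_{k,n}$ and then using the zero-diagonal hypothesis to match the fragmentation condition. Your version is in fact more carefully written than the paper's one-line proof.
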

 \begin{proof}
 $M$ is $\varepsilon$-fragmented by
$A$ if and only if   $\Sigma_{k\in A}m_{k, n}\leq\varepsilon$ for each $k\in A$
if and only if 
$\|M_AMM_A\|_\infty\leq\varepsilon$. 
 \end{proof}

\begin{proposition}\label{inclusions} $ $
\begin{enumerate}
\item $\mathsf{Ros}=\mathsf{Ros}(c_0, \ell_\infty)$,\label{ros=ros}
\item $\mathsf{Ros}(c_0, \ell_\infty)=\mathsf{Ros}(\ell_1)$,\label{ros=rosl1}
\item $\mathsf{Ros}(c_0)\subseteq\mathsf{Ros}$.\label{rosc0inros}
\end{enumerate}
\end{proposition}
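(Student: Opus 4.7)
The plan is to establish each of the three items by unravelling the definitions through the matrix--operator dictionaries of Lemmas \ref{operatorsc0linf}, \ref{operatorsl1}, and \ref{operatorsc0}, combined with the reformulation of $\varepsilon$-fragmentation in Lemma \ref{equiv-fragmenting}. Throughout the argument, the diagonal matrix $M_A$ corresponds, under these dictionaries, to the projection $P_A$ restricted to the relevant sequence space.

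For (1), given a Rosenthal matrix $M$ and $\varepsilon>0$, I would first replace $M$ by the matrix $M'$ obtained by setting the diagonal entries to $0$; this does not alter ${Ros}_{M,\varepsilon}$ because the fragmentation sum runs over $n\in A\setminus\{k\}$. Then $T_{M'}\in \mathcal{B}_0(c_0,\ell_\infty)$ with $\|T_{M'}\|=\|M'\|_\infty$, and Lemma \ref{equiv-fragmenting} together with $T_{M_A}=P_A$ yields that $M'$ is $\varepsilon$-fragmented by $A$ iff $\|P_A T_{M'}P_A\|\leq\varepsilon$; rescaling gives ${Ros}_{M,\varepsilon}={Ros}_{T_{M'},\,\varepsilon/\|T_{M'}\|}$ (the zero case is trivial). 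Conversely, given $T\in \mathcal{B}_0(c_0,\ell_\infty)$ with matrix $M$, the key observation is that $\|P_A T P_A\|$ as a $c_0\to\ell_\infty$ operator norm only sees the absolute values of entries (it is the supremum of row $\ell_1$-norms). Hence $M^+:=(|m_{k,n}|)$ is a Rosenthal matrix with zero diagonal, $\|M^+\|_\infty=\|T\|$, and $\|P_A TP_A\|=\|P_A T_{M^+}P_A\|$, so ${Ros}_{T,\varepsilon}={Ros}_{M^+,\,\varepsilon\|T\|}$.

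For (2), I would use the transposition $M\mapsto M^T$, which is a bijection between matrices with bounded $\|\cdot\|_\infty$ (operators in $\mathcal{B}_0(c_0,\ell_\infty)$) and matrices with bounded $\|\cdot\|_1$ (operators in $\mathcal{B}_0(\ell_1)$). It preserves the zero-diagonal condition and satisfies $\|T_M\|=\|M\|_\infty=\|M^T\|_1=\|T_{M^T}\|$. For any infinite $A$, the row-sum formula for the $c_0\to\ell_\infty$ norm and the column-sum formula for the $\ell_1\to\ell_1$ norm both compute $\sup_{k\in A}\sum_{n\in A}|m_{k,n}|$, so $\|P_A T_M P_A\|=\|P_A T_{M^T}P_A\|$ and consequently ${Ros}_{T_M,\varepsilon}={Ros}_{T_{M^T},\varepsilon}$, giving equality of the two collections of dense sets. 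For (3), Lemma \ref{operatorsc0} presents $\mathcal{B}_0(c_0)$ as those $T\in\mathcal{B}_0(c_0,\ell_\infty)$ whose matrices have columns tending to $0$; enlarging the codomain from $c_0$ to $\ell_\infty$ preserves the operator norm of $T$ and of each $P_A T P_A$, so every ${Ros}_{T,\varepsilon}\in \mathsf{Ros}(c_0)$ is literally a member of $\mathsf{Ros}(c_0,\ell_\infty)=\mathsf{Ros}$.

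The only substantive point requiring care is the sign issue in (1): Rosenthal matrices have non-negative entries by definition, while operators in $\mathcal{B}_0(c_0,\ell_\infty)$ are represented by matrices with arbitrary (signed or complex) entries. The absolute-value passage $M\mapsto M^+$ resolves this, because it leaves the $\|\cdot\|_\infty$ norm (and hence every compressed norm $\|P_A(\cdot)P_A\|$) invariant. The remaining work is bookkeeping the matrix--operator correspondences and the $\varepsilon$ vs.\ $\varepsilon\|T\|$ rescaling.
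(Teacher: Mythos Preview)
Your proposal is correct and follows essentially the same approach as the paper: for (1) you zero out the diagonal and pass to absolute values exactly as the paper does, for (2) you use the transpose bijection between $\mathcal{B}_0(c_0,\ell_\infty)$ and $\mathcal{B}_0(\ell_1)$ together with the equality of the row-sup and column-sup norms on the compressed matrix, and for (3) you use the isometric inclusion of codomains. The only difference is cosmetic: you make the absolute-value step and the norm-preservation under codomain enlargement slightly more explicit than the paper does.
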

\begin{proof}
Let $M$ be a Rosenthal matrix and $\varepsilon>0$. Let $M'=(m_{k, n}')_{k, n\in \N}$
 be obtained from $M$
by replacing the diagonal entries by zeros. We will see that 
$${Ros}_{M, \varepsilon}={Ros}_{T_{M'}, \varepsilon/\|T_{M'}\|}=\{A\in [\N]^\omega: 
\|P_AT_{M'}P_A\|\leq \varepsilon\},$$
where $T_{M'}$ is the operator defined in Lemma \ref{operatorsc0linf} for $M'$. 
Let $A\subseteq \N$ be infinite.
$M$ is $\varepsilon$-fragmented by
$A$ if and only if  $M'$ is $\varepsilon$-fragmented by
$A$ if and only if
$\|M_AM'M_A\|_\infty\leq\varepsilon$ (by Lemma \ref{equiv-fragmenting})
if and only if 
$\|P_AT_{M'}P_A\|\leq\varepsilon$ by Lemma \ref{operatorsc0linf}. 

Now let $T:c_0\rightarrow \ell_\infty$ has zeros on the diagonal, 
i.e., $T(1_{\{n\}})(n)=0$  for each $n\in \N$. Define a matrix
$M=(m_{k, n})_{k, n\in \N}$ given by $m_{k, n}=|S(1_{\{n\}})(k)|$ for all
$k, n\in \N$. Lemma \ref{operatorsc0linf} implies that it is a Rosenthal matrix. 
Let $A\subseteq \N$ be infinite.
$\|P_ATP_A\|\leq \varepsilon\|T\|$ if and only if  $\|M_AMM_A\|\leq \varepsilon\|T\|$
if and only if 
$M$ is $\varepsilon\|T\|$-fragmented by
$A$ by Lemma \ref{equiv-fragmenting}. So
${Ros}_{M, \varepsilon\|T\|}={Ros}_{T, \varepsilon}$ which proves (1).

For (2) we note that by Lemmas \ref{operatorsc0linf}
and \ref{operatorsl1} a matrix $M$ defines an operator $T_M$ from $c_0$ into
$\ell_\infty$ if and only if its transpose $M'$  defines an operator $T_{M'}$  on $\ell_1$.
Moreover $\|M\|_\infty=\|M'\|_1$. So
$\|P_A T_M P_A\|=\|P_A M P_A\|_\infty=\|P_A {M'} P_A\|_1=\|P_A T_{M'} P_A\|$ and
consequently (2) is proved.

(3) follows from the fact that
operators on $c_0$ form a subclass of operators from $c_0$ into $\ell_\infty$.
\end{proof}

So we immediately obtain:

\begin{proposition}\label{inequalities-operators} 
$  $
\begin{enumerate}
\item $\mathfrak{ros}=\mathfrak{ros}(\ell_1)$,\label{ros-rosl1}
\item $\mathfrak{ros}(c_0)\leq\mathfrak{ros}.$\label{rosc0-ros}
\end{enumerate}
\end{proposition}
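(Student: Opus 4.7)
The plan is to derive both inequalities directly from Proposition \ref{inclusions} via a trivial monotonicity principle for generic cardinal invariants. Specifically, if $\mathsf D'\subseteq \mathsf D$ are two collections of dense subsets of $[\N]^\omega$, then any generic family for $\mathsf D$ is automatically a generic family for $\mathsf D'$, since it must hit every $\mathcal D\in \mathsf D$ and hence every $\mathcal D\in \mathsf D'$. Thus $\mathfrak{gen}(\mathsf D')\leq \mathfrak{gen}(\mathsf D)$, and in particular $\mathsf D'=\mathsf D$ yields $\mathfrak{gen}(\mathsf D')=\mathfrak{gen}(\mathsf D)$.

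For part (\ref{ros-rosl1}), I would chain the identities (\ref{ros=ros}) and (\ref{ros=rosl1}) of Proposition \ref{inclusions} to conclude
$$\mathsf{Ros}=\mathsf{Ros}(c_0,\ell_\infty)=\mathsf{Ros}(\ell_1),$$
and then apply the monotonicity principle above (in its equality form) to obtain
$$\mathfrak{ros}=\mathfrak{gen}(\mathsf{Ros})=\mathfrak{gen}(\mathsf{Ros}(\ell_1))=\mathfrak{ros}(\ell_1).$$

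For part (\ref{rosc0-ros}), part (\ref{rosc0inros}) of Proposition \ref{inclusions} records $\mathsf{Ros}(c_0)\subseteq \mathsf{Ros}$, so a further application of the monotonicity principle immediately gives
$$\mathfrak{ros}(c_0)=\mathfrak{gen}(\mathsf{Ros}(c_0))\leq \mathfrak{gen}(\mathsf{Ros})=\mathfrak{ros}.$$

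There is essentially no obstacle here: all analytic content was absorbed into Proposition \ref{inclusions}, whose proof identified Rosenthal matrices with operators $c_0\to \ell_\infty$ and with their transposes as operators on $\ell_1$, and noted that operators on $c_0$ form a subclass. The only remaining step is the purely formal observation that $\mathfrak{gen}$ is monotone under inclusion of the indexing collection of dense families, which is immediate from the definition of a generic family.
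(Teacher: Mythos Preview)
Your proposal is correct and is exactly the paper's approach: the paper simply states ``So we immediately obtain'' right after Proposition~\ref{inclusions}, and your monotonicity-of-$\mathfrak{gen}$ argument is the tacit justification behind that sentence.
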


\subsection{Rosenthal families and  free sets}\label{free}

From a combinatorial point of view, a natural special kind of matrices $(m_{k, n})_{k, n\in \N}$ as
in Definition \ref{rosenthal-intro} is defined by requiring that it is binary 
(i.e., $m_{k, n}\in \{0,1\}$ for all $k, n\in \N$), 
 antidiagonal (i.e., $m_{k, k}=0$ for all $k\in \N$)  and each row has a nonzero entry, i.e., 
 there is a function $f:\N\rightarrow \N$ with no fixed points
 such that $m_{k, f(k)}=1$ for each $k\in \N$
 and $m_{k, n}=0$ for each $n\not=f(k)$. We will denote such a matrix by $M_f$.

 \begin{lemma}\label{equiv-free} Suppose 
 that $f:\N\rightarrow\N$ has no fixed points and $A\subseteq \N$
 is infinite.
 $M_f$ is  $1/2$-fragmented by $A$ if and only if
   $f[A]\cap A=\emptyset$. 
  \end{lemma}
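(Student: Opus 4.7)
The plan is to simply unwind the definitions. The matrix $M_f = (m_{k,n})_{k,n\in \N}$ is binary with exactly one nonzero entry per row, located at column $f(k)$, and (since $f$ has no fixed points) with zero diagonal. So for any $k \in \N$ and any $A \subseteq \N$, the sum
$$\sum_{n\in A\setminus\{k\}} m_{k,n}$$
takes only two possible values: it equals $1$ if $f(k) \in A \setminus \{k\}$, and $0$ otherwise. Using again that $f(k) \neq k$, the condition $f(k) \in A \setminus \{k\}$ simplifies to $f(k) \in A$.

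First I would write down this dichotomy explicitly, observing that the sum is either $0$ or $1$, never any intermediate value. Then the condition of being $1/2$-fragmented by $A$ — which requires the sum to be at most $1/2$ for every $k \in A$ — becomes equivalent to requiring the sum to equal $0$ for every $k \in A$, because $1 > 1/2$ rules out the other case. This in turn unwinds to the statement that $f(k) \notin A$ for every $k \in A$, which is precisely $f[A] \cap A = \emptyset$.

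Both directions are then immediate: if $M_f$ is $1/2$-fragmented by $A$ and $k \in A$, then $f(k) \notin A$, so $f[A] \cap A = \emptyset$; conversely, if $f[A] \cap A = \emptyset$, then for every $k \in A$ the relevant sum is $0 \leq 1/2$.

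There is no real obstacle here — the only subtlety to flag is the role of the hypothesis that $f$ has no fixed points, which is needed twice: once to guarantee that $M_f$ is genuinely a matrix with zero diagonal (so the definition of $\varepsilon$-fragmentation from Lemma \ref{rosenthal-intro} applies to it as a Rosenthal matrix), and once to reduce $f(k) \in A \setminus \{k\}$ to $f(k) \in A$. The choice of threshold $1/2$ is arbitrary among values in $[0,1)$; any $\varepsilon < 1$ would give the same equivalence, which is worth noting but not strictly needed for the statement.
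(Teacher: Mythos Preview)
Your proof is correct and follows essentially the same route as the paper: both unwind the definition of $\varepsilon$-fragmentation, observe that each row sum over $A\setminus\{k\}$ is $0$ or $1$ according to whether $f(k)\in A$, and conclude. Your write-up is simply more explicit about the role of the no-fixed-point hypothesis and the irrelevance of the particular threshold $1/2$.
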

  \begin{proof}
  $M_f$ is $1/2$-fragmented by
$A$ if and only if   $\Sigma_{k\in A}m_{k, n}=\leq1/2$ for each $k\in A$
if and only if $f(k)\not\in A$ for each $k\in A$ if and only if 
$f[A]\cap A=\emptyset$. 
  \end{proof}
  
  A set $A$ satisfying $f[A]\cap A=\emptyset$ is called {\sl free} for $f$ following
  a well-established combinatorial terminology (e.g. \cite{komjath}) according to which,
   more generally 
  given a set mapping $f : X \rightarrow \wp(X)$ a set $Y\subseteq X$ is called {free } if  
  $y\not\in f(y')$  for any $y, y'\in Y$.

  \begin{proposition}\label{inequalities}$  $
  \begin{enumerate}
  \item $\mathfrak{ros}_{01}\leq \mathfrak{ros}$.\label{ros01-ros}
  \item $\mathfrak{ros}_{{01}}({c_0})\leq \mathfrak{ros}_{{01}}$.\label{ros01c0-ros01}
  \item $\mathfrak{ros}_{{01}}({c_0})\leq \mathfrak{ros}(c_0)$.\label{ros01c0-rosc0}
  \end{enumerate}
  \end{proposition}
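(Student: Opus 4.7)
My plan relies on the elementary observation that $\mathfrak{gen}(\mathsf D_1)\leq \mathfrak{gen}(\mathsf D_2)$ whenever $\mathsf D_1\subseteq \mathsf D_2$, since any family meeting every member of $\mathsf D_2$ automatically meets every member of $\mathsf D_1$. Hence all three inequalities will be established by verifying the corresponding inclusions of collections of dense sets, with no need for a clever choice of witnesses.

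For (\ref{ros01-ros}), I would fix a fixed-point-free $f:\N\to\N$ and consider the binary antidiagonal matrix $M_f$ with $m_{k,f(k)}=1$ and all other entries $0$. Each row has sum $1$, so $M_f$ is a Rosenthal matrix. Lemma \ref{equiv-free} gives ${Ros}_f={Ros}_{M_f,1/2}$, whence ${Ros}_f\in\mathsf{Ros}$, and therefore $\mathsf{Ros}_{01}\subseteq\mathsf{Ros}$. For (\ref{ros01c0-ros01}), the inclusion $\mathsf{Ros}_{01}(c_0)\subseteq\mathsf{Ros}_{01}$ is immediate from the definitions, since every finite-to-one fixed-point-free function is in particular fixed-point-free.

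For (\ref{ros01c0-rosc0}), I would take a finite-to-one fixed-point-free $f$ and inspect the columns of $M_f$: the $n$-th column is the characteristic function of the finite set $f^{-1}(\{n\})$, so $\lim_{k\to\infty}m_{k,n}=0$ for every $n$. By Lemma \ref{operatorsc0} the operator $T_{M_f}$ is then a bounded linear operator on $c_0$, and since $M_f$ has zero diagonal, $T_{M_f}\in\mathcal B_0(c_0)$. Reapplying the computation from the proof of Proposition \ref{inclusions}(\ref{ros=ros}) (that is, combining Lemmas \ref{equiv-fragmenting} and \ref{operatorsc0linf} to rewrite fragmentation as an operator-norm condition) yields
\[
{Ros}_f={Ros}_{M_f,1/2}={Ros}_{T_{M_f},\,1/(2\|T_{M_f}\|)}\in\mathsf{Ros}(c_0),
\]
so $\mathsf{Ros}_{01}(c_0)\subseteq\mathsf{Ros}(c_0)$.

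No step presents a real obstacle; the only subtle point is in (\ref{ros01c0-rosc0}), where the finite-to-one hypothesis on $f$ must be invoked to upgrade $T_{M_f}$ from an operator into $\ell_\infty$ to an operator into $c_0$. This is precisely what the column-convergence criterion of Lemma \ref{operatorsc0} supplies, and without this hypothesis the inclusion $\mathsf{Ros}_{01}(c_0)\subseteq\mathsf{Ros}(c_0)$ would not be available through this matrix encoding.
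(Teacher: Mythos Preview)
Your proposal is correct and follows essentially the same approach as the paper: both establish each inequality by proving the corresponding inclusion of collections of dense sets, using Lemma \ref{equiv-free} for (\ref{ros01-ros}), the trivial inclusion of function classes for (\ref{ros01c0-ros01}), and the column-convergence criterion of Lemma \ref{operatorsc0} together with Lemmas \ref{equiv-fragmenting} and \ref{equiv-free} for (\ref{ros01c0-rosc0}). The only cosmetic difference is that you make explicit the general principle $\mathsf D_1\subseteq\mathsf D_2\Rightarrow\mathfrak{gen}(\mathsf D_1)\leq\mathfrak{gen}(\mathsf D_2)$ and route the operator-norm identification through Proposition \ref{inclusions}(\ref{ros=ros}), whereas the paper cites the underlying lemmas directly.
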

\begin{proof} For (\ref{ros01-ros}) we note that by Lemma \ref{equiv-free} we have
${Ros}_f={Ros}_{M_f, 1/2}$, where $f:\N\rightarrow \N$ has no fixed points. So
we have $\mathsf{Ros}_{01}\subseteq \mathsf{Ros}$ and this implies (\ref{ros01-ros}).

For (\ref{ros01c0-ros01}) we need to note that
$\mathsf{Ros}_{01}(c_0)\subseteq \mathsf{Ros}_{01}$ which
follows from the inclusion of finite to one functions with no fixed points in
all functions with no fixed points.

For (\ref{ros01c0-rosc0}) we need to note that
$\mathsf{Ros}_{01}(c_0)\subseteq \mathsf{Ros}(c_0)$ which follows from
the fact that if $f:\N\rightarrow \N$ has no fixed points and is finite
to one, then $M_f$ is a Rosenthal matrix whose columns have only finitely
many non-zero entries and so by Lemma \ref{operatorsc0} the matrix $M_f$
corresponds to a linear bounded operator $T_{M_f}$  on $c_0$. Moreover,
by Lemmas \ref{equiv-fragmenting} and \ref{equiv-free} the conditions
$\|P_AT_{M_f}P_A\|\leq 1/2$ corresponds to $f[A]\cap A=\emptyset$ 
for any infinite $A\subseteq \N$.
\end{proof}

\subsection{Rosenthal families and sequences of measures}\label{measures}
In this section we show that the generic cardinal invariant $\mathfrak{ros}$ 
corresponding to the combinatorial version of Rosenthal's lemma (Lemma \ref{rosenthal-intro})
is the same for the families of dense sets corresponding to both of the classical versions of Rosenthal's lemma (Lemma \ref{rosenthal-lemma}).
H. Rosenthal proved in \cite{rosenthal1} and
 \cite{rosenthal2} the following\footnote{Originally
  the lemma served for proving fundamental new results  
  in the theory of Banach spaces in \cite{rosenthal1} and \cite{rosenthal2} but it has been 
  also quickly realized that it may serve to simplify
   an extensive body of results from the geometry of Banach spaces due to Phillips, Grothendieck, 
  Pe\l czy{\'n}ski, Lindenstrauss and others
(\cite{phillips, grothendieck, pelczynski1,  lindenstrauss}).
It turned out that the lemma may play even more 
dramatic role in the the theory of Banach valued
 vector measures, essentially simplifying the proofs of Diestel-Faires and Orlicz-Pettis
  theorems, the Nikodym boundedness principle and many others surveyed in I. 4 of \cite{diestel-uhl}.}: 

\begin{lemma}\label{rosenthal-lemma} Let $\mathcal A$
 be a Boolean algebra and $\mu_k: \A\rightarrow \R_+\cup\{0\}$
be  finitely additive measures on $\A$ for each $k\in \N$ which are uniformly bounded i.e.,
$\mu_k(1_\A)\leq \rho$ for some $\rho\geq 0$, 
where $1_\A$ is the unit of $\A$. Let $(A_n)_{n\in \N}$ be pairwise disjoint
elements of $\A$ and $\varepsilon>0$. Then there is an infinite $A\subseteq \N$ such that
for every $k\in A$ we have 
$$\sum_{n\in A\setminus\{k\}}\mu_k(A_n)\leq\varepsilon.$$
Moreover, if $\A$ is a $\sigma$-complete Boolean algebra (but still the measures are
assumed only to be finitely additive), then
$A$ above may be chosen to satisfy the following stronger requirement for each $k\in A$:
$$\mu_k\Big(\bigvee_{n\in A\setminus\{k\}}A_n\Big)\leq\varepsilon,$$
where $\bigvee$ denotes the supremum in $\A$.
\end{lemma}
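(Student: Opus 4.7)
For the first assertion the plan is to reduce directly to Lemma~\ref{rosenthal-intro}: set $m_{k,n}=\mu_k(A_n)$ and observe that since the $A_n$'s are pairwise disjoint and $\mu_k$ is finitely additive with $\mu_k(1_\A)\leq\rho$, for every $N$ one has $\sum_{n\leq N}m_{k,n}=\mu_k(\bigvee_{n\leq N}A_n)\leq\rho$, so $M=(m_{k,n})_{k,n\in\N}$ is a Rosenthal matrix with $\|M\|_\infty\leq\rho$. Applying Lemma~\ref{rosenthal-intro} to $M$ and the given $\varepsilon$ produces the infinite $A\subseteq\N$ required by the first statement.

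The $\sigma$-complete case is more delicate, because a finitely additive measure can assign an infinite join a value strictly bigger than the sum of the individual summands (an ultrafilter measure on $\wp(\N)$ already illustrates this). My plan is to combine the statement just obtained with a diagonal construction driven by a splitting lemma. For each $k$, $\sigma$-completeness allows one to define $\nu_k\colon\wp(\N)\to[0,\rho]$ by $\nu_k(S)=\mu_k(\bigvee_{n\in S}A_n)$, and the pairwise disjointness of the $A_n$'s together with finite additivity of $\mu_k$ makes each $\nu_k$ a finitely additive measure on $\wp(\N)$.

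The workhorse will be the following \emph{splitting lemma}: for any finitely additive $\nu\colon\wp(\N)\to[0,\rho]$, any infinite $B\subseteq\N$ and any $\delta>0$, there is an infinite $B'\subseteq B$ with $\nu(B')\leq\delta$. I would prove it by iterated halving: writing $B$ in increasing order and splitting it into its even- and odd-indexed subsets gives two disjoint infinite sets whose $\nu$-masses sum to $\nu(B)$, so one of them has $\nu$-mass at most $\nu(B)/2$; iterating $\lceil\log_2(\rho/\delta)\rceil$ times yields the required $B'$.

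With these tools in hand, I would first apply the weaker statement already proved to obtain an infinite $A_0\subseteq\N$ with $\sum_{n\in A_0\setminus\{k\}}\mu_k(A_n)\leq\varepsilon/2$ for every $k\in A_0$, and then build $A=\{k_1<k_2<\cdots\}\subseteq A_0$ together with decreasing infinite sets $A_0=B_0\supseteq B_1\supseteq\cdots$ by induction: put $k_j=\min B_{j-1}$ and, using the splitting lemma, choose an infinite $B_j\subseteq B_{j-1}\setminus\{k_j\}$ with $\nu_{k_j}(B_j)\leq\varepsilon/2$. To verify, fix $k_s\in A$ and decompose $A\setminus\{k_s\}$ as the finite past $\{k_j:j<s\}$ plus the future tail $\{k_j:j>s\}\subseteq B_s$: the past contributes $\sum_{j<s}\mu_{k_s}(A_{k_j})\leq\sum_{n\in A_0\setminus\{k_s\}}\mu_{k_s}(A_n)\leq\varepsilon/2$ by the choice of $A_0$, while the future contributes $\nu_{k_s}(\{k_j:j>s\})\leq\nu_{k_s}(B_s)\leq\varepsilon/2$; finite additivity sums them to $\leq\varepsilon$. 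I expect the main obstacle to be noticing that the ``past'' $\{k_j:j<s\}$ cannot be handled by the splitting lemma alone---those indices were committed before stage $s$---so the combinatorial Rosenthal lemma must be invoked as a pre-processing step to bound $\sum_{n\in A_0\setminus\{k\}}\mu_k(A_n)$ uniformly in $k\in A_0$.
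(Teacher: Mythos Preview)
Your argument is correct. The reduction of the first assertion to Lemma~\ref{rosenthal-intro} via $m_{k,n}=\mu_k(A_n)$ is exactly what the paper does (implicitly) in Proposition~\ref{measures-rosenthal}. For the $\sigma$-complete part, the key points---that $\nu_k(S)=\mu_k\big(\bigvee_{n\in S}A_n\big)$ is finitely additive on $\wp(\N)$ (this uses the distributive law $(\bigvee_i a_i)\wedge b=\bigvee_i(a_i\wedge b)$, valid in any Boolean algebra, to see that $\bigvee_{n\in S}A_n$ and $\bigvee_{n\in T}A_n$ are disjoint when $S\cap T=\emptyset$), the halving lemma, and the diagonal construction separating a finite ``past'' from an infinite ``future''---are all sound.

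Note, however, that the paper does not prove Lemma~\ref{rosenthal-lemma}; it is quoted as a classical result of Rosenthal. The paper's own contribution to the $\sigma$-complete phenomenon is Proposition~\ref{avoiding-boundary}, and its method is genuinely different from yours. There one passes to the Stone space $K_\A$, extends each $\mu_k$ to a Radon measure via Hahn--Banach and Riesz representation, and observes that the ``defects'' $\Delta_B=[\bigvee_{n\in B}A_n]\setminus\bigcup_{n\in B}[A_n]$ are pairwise disjoint as $B$ ranges over an almost disjoint family; since a bounded Radon measure cannot charge uncountably many disjoint sets, for all but countably many $B$ in an almost disjoint family of size $\omega_1$ the defect vanishes for every $\mu_k$ simultaneously. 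Your approach is more elementary---no Stone duality, no Hahn--Banach---and gives the lemma directly, but it only makes the defect \emph{small}; the paper's argument makes it \emph{zero} and does so uniformly along an almost disjoint family, which is precisely what is needed later in Proposition~\ref{sigma-rosenthal} to show that a Rosenthal family can be upgraded to one meeting the $\sigma$-complete dense sets without increasing its cardinality.
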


Given an infinite Boolean algebra $\A$ by $ac(\A)$ we will denote  the class of all
 infinite pairwise disjoint  sequences ${\overline A}=(A_n)_{n\in \N}$ of
elements of $\A$. By $\mu_\infty(\A)$ we will denote the class of  all uniformly bounded sequences
of finitely additive measures on $\A$. Given $\varepsilon>0$, a Boolean algebra
$\A$, ${\overline A}\in ac(\A)$ and ${\overline \mu}\in \mu_\infty(\A)$ we can consider
$${Ros}_{\A, {\overline A}, {\overline \mu}, \varepsilon}=
\{A\in [\N]^\omega:\forall k\in A\  \sum_{n\in A\setminus\{k\}}\mu_k(A_n)\leq\varepsilon\}.$$

\begin{proposition}\label{measures-rosenthal}
The generic families for
the collection of dense sets of the form
${Ros}_{\A, {\overline A}, {\overline \mu}, \varepsilon}$,
where $\A$ is a Boolean algebra, ${\overline A}\in ac(\A)$,
${\overline \mu}\in \mu_\infty(\A)$ and $\varepsilon>0$ are
exactly Rosenthal families. Consequently the generic cardinal invariant
for these collections is $\mathfrak{ros}$.
\end{proposition}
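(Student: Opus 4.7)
The plan is to establish the equality of the two classes of generic families by constructing, in each direction, a concrete correspondence between Rosenthal matrices and triples $(\A, {\overline A}, {\overline \mu})$ that makes the dense sets coincide.

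For the direction that every Rosenthal family $\F$ is generic for the measure-theoretic collection, I fix arbitrary $\A$, ${\overline A} = (A_n)_{n\in\N} \in ac(\A)$, ${\overline \mu} = (\mu_k)_{k\in\N} \in \mu_\infty(\A)$ and $\varepsilon > 0$, and define a matrix $M = (m_{k,n})_{k,n\in\N}$ by $m_{k,n} = \mu_k(A_n)$. The key point to check is that $M \in \mathbb M$: using finite additivity of $\mu_k$ together with the pairwise disjointness of the $A_n$, every finite partial sum satisfies $\sum_{n\in F} \mu_k(A_n) = \mu_k\!\bigl(\bigvee_{n\in F} A_n\bigr) \leq \mu_k(1_\A) \leq \rho$, so the row sums are bounded by the common bound $\rho$ of ${\overline \mu}$. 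The condition defining ${Ros}_{M,\varepsilon}$ then reads verbatim as the condition defining ${Ros}_{\A, {\overline A}, {\overline \mu}, \varepsilon}$, so the two sets are equal; any $F \in \F \cap {Ros}_{M,\varepsilon}$ witnesses genericity at this parameter.

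For the converse direction, I fix an arbitrary Rosenthal matrix $M = (m_{k,n})_{k,n\in\N}$ and $\varepsilon > 0$, and I realize ${Ros}_{M,\varepsilon}$ as a measure-theoretic dense set by taking $\A = \wp(\N)$, $A_n = \{n\}$ (clearly pairwise disjoint), and defining $\mu_k(B) = \sum_{n \in B} m_{k,n}$ for $B \subseteq \N$. Each $\mu_k$ is a (countably, hence finitely) additive measure on $\wp(\N)$, and the family $(\mu_k)_{k\in\N}$ is uniformly bounded by $\|M\|_\infty$, so ${\overline \mu} \in \mu_\infty(\wp(\N))$. Since $\mu_k(A_n) = m_{k,n}$, the defining condition of ${Ros}_{\wp(\N), {\overline A}, {\overline \mu}, \varepsilon}$ coincides with that of ${Ros}_{M,\varepsilon}$. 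Hence any family $\mathcal G$ generic for the measure collection meets ${Ros}_{M,\varepsilon}$, and so $\mathcal G$ is a Rosenthal family.

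Combining the two directions shows that the two classes of generic families coincide, and therefore so do their minimal cardinalities, which gives $\mathfrak{ros}$. There is no serious obstacle here: the proof is essentially bookkeeping, and the only substantive observation is that the pairwise disjointness of the $A_n$ combined with the uniform bound on the $\mu_k$ is exactly what is needed to turn the data $(\A, {\overline A}, {\overline \mu})$ into a Rosenthal matrix, while the power-set algebra with point masses is rich enough to encode any Rosenthal matrix as a sequence of measures.
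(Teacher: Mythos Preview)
Your proof is correct and follows essentially the same approach as the paper: in one direction you associate to the measure data the matrix $m_{k,n}=\mu_k(A_n)$, and in the other you realize a Rosenthal matrix as a sequence of summation measures on singletons. The only cosmetic difference is that the paper uses the finite--cofinite algebra in the second direction while you use $\wp(\N)$; both choices work.
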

\begin{proof}
It is clear that ${Ros}_{\A, {\overline A}, {\overline \mu}, \varepsilon}={Ros}_{M, \varepsilon}$,
where $M=(m_{k, n})_{k, n\in \N}$ is a Rosenthal matrix defined by
$m_{k, n}=\mu_k(A_n)$ for every $k, n\in \N$. So the corresponding notions of generic families
and the generic cardinal invariants are the same for the above collection of
dense sets and for $\mathsf{Ros}$. 

On the other hand if $M=(m_{k, n})_{k, n\in \N}$ is a 
Rosenthal matrix, then we can define
a finitely additive measure $\mu_k(A)=\Sigma_{n\in A}m_{k, n}$
for any finite or cofinite $A\subseteq\N$. Now
${Ros}_{\A, {\overline A}, {\overline \mu}, \varepsilon}={Ros}_{M, \varepsilon}$,
where $\A$ is the algebra of finite  or cofinite subsets of $\N$,
${\overline A}=(\{n\})_{n\in \N}$.
\end{proof}

 Dense sets corresponding to the
second version of Rosenthal's lemma above are of the form
$${Ros}_{\A, {\overline A}, {\overline \mu}, \varepsilon}^\sigma=
\{A\in [\N]^\omega:\forall k\in A\  
\mu_k\Big(\bigvee_{n\in A\setminus\{k\}}A_n\Big)\leq\varepsilon\},$$
where $\A$ is a $\sigma$-complete Boolean algebra.  
To show that the generic cardinal invariant corresponding to the dense sets from the second 
version of Rosenthal's lemma (Proposition \ref{sigma-rosenthal}) we need the following:

\begin{proposition}\label{avoiding-boundary} Suppose that
$(C_\xi)_{\xi<\omega_1}$ is an almost disjoint family of infinite subsets of
$\N$. 
If $\A$ is a  Boolean algebra and $\mu_k$s  for $k\in \N$ are  finitely additive
positive measures on $\A$ whose norms are  bounded by $\rho\in \R$, and if $(A_n)_{n\in \N}$ are pairwise disjoint
elements of $\A$, then for all but countably many $\xi<\omega_1$ for every
$k\in \N$
$$\mu_k\Big(\bigvee_{n\in B}A_n\Big)=\sum_{n\in B}\mu_k(A_n) $$
whenever $B\subseteq C_\xi$ and $\bigvee_{n\in B}A_n$ exists in $\A$.
\end{proposition}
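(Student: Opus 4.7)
The plan is to prove the statement in quantitative contrapositive form: for each fixed $k\in\N$ and each integer $m\geq 1$, I will show that only finitely many $\xi<\omega_1$ admit a witness $B\subseteq C_\xi$ with $\bigvee_{n\in B}A_n$ existing and ``defect'' $\mu_k(\bigvee_{n\in B}A_n)-\sum_{n\in B}\mu_k(A_n)\geq 1/m$. This difference is always $\geq 0$, since for every finite $F\subseteq B$ the partial sum $\sum_{n\in F}\mu_k(A_n)=\mu_k(\bigvee_{n\in F}A_n)$ is bounded by $\mu_k(\bigvee_{n\in B}A_n)$, and the supremum of the partial sums equals $\sum_{n\in B}\mu_k(A_n)$. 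Unioning the finite sets of exceptions over $(k,m)\in\N\times\N$ then yields the claimed countable set of bad ordinals.

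Fix $k$ and $m$, and suppose $\xi_1,\dots,\xi_N$ are distinct bad indices with witnesses $B_i\subseteq C_{\xi_i}$ and $E_i:=\bigvee_{n\in B_i}A_n$ having defect at least $1/m$. Almost disjointness makes each $F_i:=\bigcup_{j\neq i}(B_i\cap B_j)\subseteq\bigcup_{j\neq i}(C_{\xi_i}\cap C_{\xi_j})$ finite, so setting $B_i':=B_i\setminus F_i$ produces pairwise disjoint subsets of $\N$. The element $E_i':=E_i\wedge\neg\bigvee_{n\in F_i}A_n$ exists as a finite Boolean combination, and a short check shows $E_i'=\bigvee_{n\in B_i'}A_n$. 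Finite additivity of $\mu_k$ gives $\mu_k(E_i')=\mu_k(E_i)-\sum_{n\in F_i}\mu_k(A_n)$, so the defect over $B_i'$ is still $\geq 1/m$, and in particular $\mu_k(E_i')\geq 1/m$.

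Next I would check that the $E_i'$ are pairwise disjoint in $\A$: since the $A_n$ are pairwise disjoint and $B_i'\cap B_j'=\emptyset$ for $i\neq j$, every $A_\ell$ with $\ell\in B_j'$ satisfies $A_\ell\wedge A_n=0$ for every $n\in B_i'$, hence $A_\ell\leq\neg E_i'$; taking the supremum over $\ell\in B_j'$ yields $E_j'\leq\neg E_i'$. Finite additivity of $\mu_k$ and the bound $\mu_k(1_\A)\leq\rho$ then give $\sum_{i=1}^N\mu_k(E_i')\leq\rho$, which combined with $\mu_k(E_i')\geq 1/m$ forces $N\leq m\rho$; so there are only finitely many bad indices for each pair $(k,m)$.

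The main potential obstacle is that $\A$ is not assumed $\sigma$-complete, so one must be careful about which suprema actually exist. However, the only infinite supremum manipulated is $E_i$, which exists by hypothesis; everything else is either a finite join or a finite Boolean combination, and a direct verification confirms that $E_i\wedge\neg\bigvee_{n\in F_i}A_n$ is precisely the supremum of $\{A_n:n\in B_i'\}$. Once this bookkeeping is in place, the argument reduces to disjointness accounting against the uniform bound $\mu_k(1_\A)\leq\rho$.
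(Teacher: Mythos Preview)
Your argument is correct, and it takes a genuinely different route from the paper. The paper passes to the Stone space $K_{\A}$, extends each $\mu_k$ via Hahn--Banach and the Riesz representation theorem to a Radon measure on $K_{\A}$, and then argues that the ``boundary'' sets $\Delta_B=[\bigvee_{n\in B}A_n]\setminus\bigcup_{n\in B}[A_n]$ are invariant under finite changes of $B$, hence pairwise disjoint across distinct $\xi$, so a bounded Radon measure cannot charge uncountably many of them. Your proof stays entirely inside the Boolean algebra and uses only finite additivity: you show the defect $\mu_k(\bigvee_{n\in B}A_n)-\sum_{n\in B}\mu_k(A_n)$ is preserved when a finite set is removed from $B$ (which is the algebraic analogue of the paper's $\Delta_B=\Delta_{B\setminus F}$), and then a pigeonhole against the bound $\mu_k(1_{\A})\leq\rho$ shows at most $\lfloor m\rho\rfloor$ indices can have defect $\geq 1/m$ for fixed $k$. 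Your verification that $E_i\wedge\neg\bigvee_{n\in F_i}A_n$ really is $\bigvee_{n\in B_i'}A_n$, and that the $E_i'$ are pairwise disjoint via the least-upper-bound property, is exactly the right way to handle the absence of $\sigma$-completeness. The payoff of your approach is that it is more elementary (no Stone duality, no Hahn--Banach, no Riesz) and quantitative (an explicit finite bound for each $(k,m)$), while the paper's route is shorter once the measure-theoretic machinery is in hand and perhaps makes the geometric picture clearer.
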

\begin{proof}
This is basically the argument from \cite{rosenthal1}.
Consider the Stone
 space $K_\A$ of the Boolean algebra $\A$ and its clopen sets $[A]$ which are those
 ultrafilters of $\A$ which contain $A$. The measures $\mu_k$s define linear functionals
 of norm not bigger than $\rho$ on the subspace of $C(K)$ consisting of continuous functions with finitely many values. By the Hahn-Banach theorem they extend to the entire $C(K)$ preserving the norm,
 and by the Riesz representation theorem the extensions can be associated with countably additive
 Borel regular measures on $K$. We will denote these extensions by the same letters $\mu_k$.
 
 Suppose that the lemma fails, so there is an uncountable set $X\subseteq\omega_1$, $k\in \N$ and
 infinite $B_\xi\subseteq C_\xi$ such that $\bigvee_{n\in B_\xi}A_n$ exists in $\A$
 but $\mu_k\Big(\Delta_{B_\xi}\Big)>0$ for each $\xi\in X$, where
 $$\Delta_B=[\bigvee_{n\in B}A_n]\setminus \bigcup_{n\in B} [A_n]$$
 for any $B\subseteq \N$ for which the supremum $\bigvee_{n\in B}A_n$ exists in $\A$.
 Now  one notes that $[\bigvee_{n\in B}A_n]$ is a disjoint union of 
 $[\bigvee_{n\in B\setminus F}A_n]$ and $\bigcup_{n\in F}[A_n]$ for any finite $F\subseteq B$.
  With this we conclude that $\Delta_B=\Delta_{B'}$
 if $B$ and $B'$ differ by a finite set, and so 
 $\Delta{B_\xi}\cap \Delta{B_{\xi'}}=\emptyset$ if $\xi\not=\xi'$.
 But a bounded Radon measure cannot be nonzero on uncountably many pairwise disjoint sets,
 so there is $\xi<\omega_1$ such that $\mu_k(\Delta_\xi)=0$ for each $k$. This
 contradicts  the choice of $B_\xi$.
\end{proof}

\begin{proposition}\label{sigma-rosenthal}
The generic cardinal invariant for the collection of all dense sets
of the form ${Ros}_{\A, {\overline A}, {\overline \mu}, \varepsilon}^\sigma$, where
$\A$ is a $\sigma$-complete Boolean algebra, ${\overline A}\in ac(\A)$,
${\overline\mu}\in \mu_\infty(\A)$, $\varepsilon>0$ is equal to $\mathfrak{ros}$.
\end{proposition}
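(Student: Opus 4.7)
The plan is to establish the two inequalities $\mathfrak{ros}\leq\mathfrak{gen}(\mathsf{Ros}^\sigma)$ and $\mathfrak{gen}(\mathsf{Ros}^\sigma)\leq\mathfrak{ros}$ separately, using Proposition \ref{avoiding-boundary} for the nontrivial direction.

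For the easy inequality $\mathfrak{ros}\leq\mathfrak{gen}(\mathsf{Ros}^\sigma)$, I would argue that every generic family for $\mathsf{Ros}^\sigma$ is already a Rosenthal family. Given a Rosenthal matrix $M=(m_{k,n})_{k,n\in\N}$ and $\varepsilon>0$, take $\A=\wp(\N)$ (a $\sigma$-complete Boolean algebra), the pairwise disjoint sequence $A_n=\{n\}$, and the countably (hence finitely) additive measures $\mu_k(B)=\sum_{n\in B}m_{k,n}$, uniformly bounded by $\|M\|_\infty$. For any infinite $A\subseteq\N$ and any $k\in A$ the supremum $\bigvee_{n\in A\setminus\{k\}}A_n$ is literally $A\setminus\{k\}$, so $\mu_k(A\setminus\{k\})=\sum_{n\in A\setminus\{k\}}m_{k,n}$ and ${Ros}^\sigma_{\A,\overline A,\overline\mu,\varepsilon}={Ros}_{M,\varepsilon}$. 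Hence any family generic for $\mathsf{Ros}^\sigma$ meets ${Ros}_{M,\varepsilon}$, so it is Rosenthal.

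For the reverse inequality, I would start from a Rosenthal family $\F$ with $|\F|=\mathfrak{ros}$, fix once and for all an AD family $(C_\xi)_{\xi<\omega_1}$ of infinite subsets of $\N$ together with bijections $h_\xi:\N\to C_\xi$, and form $\F'=\bigcup_{\xi<\omega_1}\{h_\xi[A]:A\in\F\}$. Since $\mathfrak{ros}\geq\cov(\mathcal M)\geq\aleph_1$, we have $|\F'|\leq\omega_1\cdot\mathfrak{ros}=\mathfrak{ros}$. To verify that $\F'$ is generic for $\mathsf{Ros}^\sigma$, fix any $(\A,\overline A,\overline\mu,\varepsilon)$ with $\A$ being $\sigma$-complete and use Proposition \ref{avoiding-boundary} to pick $\xi<\omega_1$ with $\mu_k(\bigvee_{n\in B}A_n)=\sum_{n\in B}\mu_k(A_n)$ for every $B\subseteq C_\xi$ and every $k\in\N$. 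Pull the data back through $h_\xi$ to obtain a Rosenthal matrix $\tilde M=(\mu_{h_\xi(k)}(A_{h_\xi(n)}))_{k,n\in\N}$, whose row sums are uniformly bounded because the $A_n$'s are pairwise disjoint. Choose $B\in\F$ that $\varepsilon$-fragments $\tilde M$; then $A^*=h_\xi[B]\in\F'$ is a subset of $C_\xi$ satisfying $\sum_{n^*\in A^*\setminus\{k^*\}}\mu_{k^*}(A_{n^*})\leq\varepsilon$ for every $k^*\in A^*$. Because $A^*\subseteq C_\xi$, the choice of $\xi$ upgrades these sums to the corresponding suprema, so $A^*\in{Ros}^\sigma_{\A,\overline A,\overline\mu,\varepsilon}$.

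The main obstacle is that a naive attempt to show that $\F$ itself is generic for $\mathsf{Ros}^\sigma$ does not go through: given an instance $(\A,\overline A,\overline\mu)$, the Rosenthal property of $\F$ controls the finite sums $\sum_n\mu_k(A_n)$ but carries no a priori information about the gap $\mu_k(\bigvee_n A_n)-\sum_n\mu_k(A_n)$. The enlargement $\F'=\bigcup_\xi h_\xi[\F]$ is designed so that for every such instance one can aim the Rosenthal property of $\F$ inside some $C_\xi$ on which Proposition \ref{avoiding-boundary} makes this gap vanish, while the ZFC bound $\mathfrak{ros}\geq\aleph_1$ keeps the enlargement within the target cardinality.
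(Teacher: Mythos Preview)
Your proof is correct. Both directions are sound, and the cardinality bookkeeping via $\mathfrak{ros}\geq\aleph_1$ is exactly what is needed.

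The paper's argument for the nontrivial inequality uses the same key tool (Proposition \ref{avoiding-boundary}) but organizes the construction differently. Rather than fixing one AD family $(C_\xi)_{\xi<\omega_1}$ and pushing $\F$ into each $C_\xi$ via bijections, the paper enlarges $\F$ by placing, below each $A\in\F$, an AD family of size $\omega_1$ of infinite subsets of $A$; call the result $\F'$. Given an instance $(\A,\overline A,\overline\mu,\varepsilon)$, the paper first uses the Rosenthal property of $\F$ to find $A\in\F$ in ${Ros}_{\A,\overline A,\overline\mu,\varepsilon}$, and only then applies Proposition \ref{avoiding-boundary} to the AD family below $A$ to select $A'\in\F'$ on which sums equal suprema; since $A'\subseteq A$ and fragmentation is downward hereditary (the entries are nonnegative), $A'$ lands in ${Ros}^\sigma_{\A,\overline A,\overline\mu,\varepsilon}$. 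So the two proofs reverse the order in which the Rosenthal property and Proposition \ref{avoiding-boundary} are invoked. The paper's version has the mild advantage that $\F'$ refines $\F$ (every member of $\F'$ sits inside some member of $\F$), whereas your $\F'$ consists of bijective transplants of $\F$ that need not be related to $\F$ by inclusion; your version, on the other hand, makes the reindexing completely explicit and avoids appealing to heredity of fragmentation.
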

\begin{proof}
Let $\F$ be a generic family for the collection of the dense sets as in
the proposition. We will show that it is a Rosenthal family. 
Let $M=(m_{k, n})_{k, n}$ be a Rosenthal matrix and $\varepsilon>0$.
Define measures $\mu_k(A)=\Sigma_{n\in A}m_{k, n}$ for $A\subseteq \wp(\N)$.
As in the $\sigma$-complete Boolean algebra $\wp(\N)$  the suprema are infinite unions
and the above measures are $\sigma$-additive we have
$$Ros_{M, \varepsilon}={Ros}_{\wp(\N), {\overline A}, {\overline \mu}, \varepsilon}^\sigma,$$
where ${\overline A}=(\{n\})_{n\in \N}$ and ${\overline \mu}=(\mu_k)_{k\in \N}$ and hence
$\F$ is a Rosenthal family.

Now suppose $\F$ is a Rosenthal family. Consider $\F'\subseteq [\N]^\omega$
such that below each element $A\in \F$ there is in $\F'$ an almost disjoint family of size $\omega_1$ of
infinite subsets of $A$. As $\F$ is uncountable, it is easy to
construct such $\F'$ of the same uncountable size as $\F$. We will
shot that $\F'$ meets each dense set as in the proposition.

Let $\A$ be a $\A$ is a $\sigma$-complete Boolean algebra, ${\overline A}\in ac(\A)$,
${\overline\mu}\in \mu_\infty(\A)$, $\varepsilon>0$. 
Using the fact that $\F$ is a Rosenthal family find 
$A\in \F\cap  {Ros}_{\A, {\overline A}, {\overline \mu}, \varepsilon}$. 
By Lemma \ref{avoiding-boundary} there is $A'\in \F'$ as in this lemma which implies
that $A'\in \F'\cap {Ros}_{\A, {\overline A}, {\overline \mu}, \varepsilon}^\sigma$.

\end{proof}

\section{Paving lemma for sets fragmenting Rosenthal matrices}

The main purpose of this section is to prove Theorem \ref{rosenthal-stronger} and
\ref{main-ultrafilter}. This is based on a paving lemma which can be concluded from
a paving lemma due to Bourgain (\cite{bourgain},\cite{bourgain-tzafriri}, see comments below
Theorem \ref{rosenthal-stronger}) But we provide our original proof because it is purely combinatorial.
We will need the following two lemmas that deal
  with triangular matrices: 

\begin{lemma}\label{ult1}
Let $M = (m_{k,n})_{k, n \in \N}$ be a Rosenthal matrix  such
 that $\|M\|_\infty\leq 1$ and 
 for every $n \in \N$ and every $n\geq k$ we have $m_{k, n} = 0$, then
  for every positive $l\in \N$ there is a partition 
  $\mathcal{P}_0 = \{ P_i : 1\leq i \leq  l  \}$ of $\N$
   such that  $M$ is  $({1\over l})$-fragmented  by $P_i$
   for every $1\leq i \leq  l $. 
\end{lemma}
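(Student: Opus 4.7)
The plan is to prove the lemma by a straightforward greedy coloring of $\N$, exploiting the strict lower-triangular structure of $M$ to carry out a clean induction on the natural order.

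First I would set up the inductive construction. I process $k=0,1,2,\dots$ in order, and at each stage assign $k$ to one of the color classes $P_1,\dots,P_l$. Because $m_{k,n}=0$ whenever $n\ge k$, the fragmentation condition at $k$ within any color class $P_i$ depends only on indices $n<k$, i.e., only on elements that have already been colored at the moment $k$ is considered. This is the feature that makes the induction work and is the reason the hypothesis on the support of $M$ is present.

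Next I would describe the greedy rule. Suppose $0,1,\dots,k-1$ have been partitioned among $P_1,\dots,P_l$ (at the current stage). For each color $i\in\{1,\dots,l\}$ put
$$S_i(k)=\sum_{\substack{n\in P_i\\ n<k}} m_{k,n}.$$
Since the $S_i(k)$ partition the sum of the $k$-th row over $\{0,\dots,k-1\}$ and $\|M\|_\infty\le 1$, we have
$$\sum_{i=1}^{l} S_i(k)=\sum_{n<k} m_{k,n}\le 1.$$
By pigeonhole there exists $i^*\in\{1,\dots,l\}$ with $S_{i^*}(k)\le 1/l$; assign $k$ to $P_{i^*}$. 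This single choice freezes the quantity $\sum_{n\in P_{i^*},\,n<k}m_{k,n}\le 1/l$ forever, since later colorings of elements $>k$ do not alter the indices $n<k$ in $P_{i^*}$.

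Finally I would verify the conclusion. By construction, for every $i$ and every $k\in P_i$ we have
$$\sum_{n\in P_i\setminus\{k\}} m_{k,n}=\sum_{\substack{n\in P_i\\ n<k}} m_{k,n}\le \frac{1}{l},$$
where the first equality uses $m_{k,n}=0$ for $n\ge k$. Thus $M$ is $(1/l)$-fragmented by each $P_i$, as required. There is no real obstacle here: the only subtlety is making sure one processes the indices in the natural order so that the pigeonhole bound applied at step $k$ is not disturbed by later assignments, and this is guaranteed precisely by the strict lower-triangular support hypothesis.
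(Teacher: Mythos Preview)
Your proof is correct and essentially identical to the paper's: both construct a coloring $f:\N\to\{1,\dots,l\}$ by induction on the natural order, applying pigeonhole to the already-colored initial segment to choose the color of the next integer, and then observe that the strict lower-triangular support freezes the relevant row sum at the moment of assignment. The only cosmetic difference is that the paper initializes by setting $f(j)=j$ for $j\le l$, which is unnecessary since your pigeonhole step handles the first few indices as well.
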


\begin{proof} Fix $M\in \M$ and $l\in \N$ as in the lemma.
We will recursively  construct a function $f: \N \rightarrow \{1, ..., l\}$ 
such that for every $1\leq i \leq  l $ the set $P_i = f^{-1}[{i}]$, 
$\frac{1}{l}$-fragments $M$:
 If $j \leq l $, then let $f(j) = j$.
  Suppose that $f(j)$ has been constructed for every $j < j_0$, 
  we will construct $f(j_0)$. 
  For $1\leq i \leq  l $ let $P_i^{j_0} = f^{-1}[{i}]$,
  Observe that $\{P_i^{j_0} : 1\leq i \leq  l  \}$ is a 
  partition of $[1,j_0)$. Using the fact that $M$ has its norm not bigger
  than $1$ and the pigeonhole principle 
  it is possible to pick  $1\leq i \leq  l $
  such that $\displaystyle \sum_{n\in P_i^{j_0}}m_{j_0,n} \leq \frac{1}{l}$.
  So we put  $f(j_0) = i$.
   This finishes the construction. 
   It follows that the partition $\{f^{-1}(i): 1\leq i \leq  l \}$
    is the partition we are looking for.
\end{proof}

\begin{lemma}\label{ult2}
Let $M = (m_{k,n})_{k, n \in \N}$ be a Rosenthal matrix  such
 that $\|M\|_\infty\leq 1$ and 
 for every $n \in \N$ and every $n\leq k$ we have $m_{k, n} = 0$, then for every positive
  $l\in \N$ there is a 
  partition $\mathcal{P}_1 = \{ P_i: 1\leq i\leq l \}$ of $\N$ 
   such that  $M$ is  $({1\over l})$-fragmented  by $P_i$
   for every $1\leq i \leq  l $. 
\end{lemma}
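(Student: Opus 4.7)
The plan is to reduce to finite truncations and extract an infinite coloring by compactness, since the direct forward-greedy construction from Lemma~\ref{ult1} does not apply here: if $M$ is strictly upper triangular, assigning $f(j)$ requires controlling the sums $\sum_{n>j,\, f(n)=f(j)}m_{j,n}$, which involve the still-undefined future values of $f$.

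First, for each $N\in\N$ I would construct $f_N:\{1,\ldots,N\}\to\{1,\ldots,l\}$ by a reverse recursion. Running $j$ from $N$ down to $1$, once $f_N$ has been defined on $(j,N]$, the $l$ preimages $f_N^{-1}(i)\cap (j,N]$ partition $(j,N]$, so by the pigeonhole principle there exists $i\in\{1,\ldots,l\}$ with
\[
\sum_{n>j,\, f_N(n)=i} m_{j,n} \;\leq\; \frac{1}{l}\sum_{n>j}m_{j,n} \;\leq\; \frac{1}{l},
\]
and we set $f_N(j):=i$. Because $m_{j,n}=0$ for $n\le j$, the partition $\{f_N^{-1}(i):1\le i\le l\}$ of $\{1,\ldots,N\}$ then $(1/l)$-fragments the $N\times N$ truncation of $M$.

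Second, I would extract an infinite coloring $f:\N\to\{1,\ldots,l\}$ from $(f_N)_N$. By compactness of $\{1,\ldots,l\}^{\N}$ (equivalently, by K\"onig's lemma applied to the tree of valid finite colorings under restriction), a standard diagonal argument produces an increasing sequence $(N_j)_j$ such that $f(k):=\lim_j f_{N_j}(k)$ exists for every $k\in \N$. To check that $P_i:=f^{-1}(i)$ does the job, fix $k\in\N$ with $f(k)=i$ and $\varepsilon>0$. Choose $K\ge k$ with $\sum_{n>K}m_{k,n}<\varepsilon$, and then $N_j\ge K$ so that $f_{N_j}$ agrees with $f$ on $\{1,\ldots,K\}$. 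Validity of $f_{N_j}$ yields
\[
\sum_{k<n\le K,\, f(n)=i}m_{k,n}\;=\;\sum_{k<n\le K,\, f_{N_j}(n)=i}m_{k,n}\;\le\;\frac{1}{l},
\]
so $\sum_{n\in P_i\setminus\{k\}}m_{k,n}\le \tfrac{1}{l}+\varepsilon$. Since $\varepsilon$ was arbitrary, this sum is at most $1/l$.

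The main obstacle is precisely this asymmetry with Lemma~\ref{ult1}. One might hope to apply Lemma~\ref{ult1} directly to the transpose $M^T$, but $M^T$ is strictly lower triangular with row sums equal to the column sums of $M$, which need not be bounded; so $M^T$ need not even be a Rosenthal matrix. The finite reverse recursion combined with a compactness extraction is exactly what compensates for the absence of a direct greedy argument in the infinite forward direction.
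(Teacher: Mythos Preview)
Your proof is correct and follows essentially the same strategy as the paper: build finite colorings on initial segments by reverse (top-down) recursion using pigeonhole, then extract a limiting coloring by compactness of $\{1,\dots,l\}^{\N}$. The paper phrases the finite step as a recursion on $|F|$ over all finite $F\subseteq\N$ (always extending by adding $\min F$) and its verification is terser, but your explicit $\varepsilon$-tail argument and your remark on why transposition fails are welcome additions rather than a different route.
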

\begin{proof}

 Fix $M\in \M$ and $l\in \N$ as in the lemma. Recursively with respect to $|F|$ we will find
  for each $F \in [\N]^{<\omega}$, 
  a function $f_F:F \rightarrow \{1, ..., l\}$ 
  such that  for every $1\leq i \leq  l $ the set $P_i = f^{-1}[{i}]$, 
$\frac{1}{l}$-fragments $M$:
  Clearly this can be done if $|F| = 1$, 
  so suppose that we already constructed $f_F$ for every
   $F$ such that $|F| < j$ and let $G \in [\N]^{j}$. 
   Let $g = \min G$ and let $F = G \setminus \{g\}$. 
   For $1\leq i \leq  l$, let $A_i = f_F^{-1}[\{i\}]$. 
   Next, we use the fact that $M$ has its norm not bigger
  than $1$
   and the pigeonhole principle to find $1\leq i \leq  l$ 
   such that $\sum_{n \in A_i}m_{g,n} \leq \frac{1}{l} $ 
   and let $f_G = \langle g,i \rangle \cup f_F$. It follows that $f_G$ has the desired properties.

To finish the proof, observe that $\{1, ..., l\}^\N$ is a compact metrizable space, 
so $(f_n)_{n\in \N}$, where $f_n=f_{\{0, ..., n\}}$ 
has a convergent subsequence $(f_{n_k})_{k\in \N}$.
It follows that any finite fragment $f|\{0, ..., n\}$ of $f$ agrees with 
some $f_{n_k}$ for some $k\in \N$ on $\{0, ..., n\}$, this means that
$\{ f^{-1}[\{i\}] : 1\leq i \leq  l\}$ is the partition that we are looking for. 
\end{proof}

\begin{theorem}[\cite{bourgain, bourgain-tzafriri}]\label{rosenthal-stronger}
For every $\varepsilon>0$ there is $l(\varepsilon)\in \N$ such that
for every Rosenthal matrix $M$ an 
there is a partition $\mathcal{P} = \{ P_i : 1\leq i \leq  l(\varepsilon) \}$ 
of $\N$, such that  $M$ is  $\varepsilon\|M\|_\infty$-fragmented  by $P_i$
   for every $1\leq i \leq  l(\varepsilon) $.  
\end{theorem}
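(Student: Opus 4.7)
The plan is to deduce Theorem~\ref{rosenthal-stronger} from the two triangular paving Lemmas~\ref{ult1} and~\ref{ult2} by decomposing an arbitrary Rosenthal matrix into its strictly upper triangular and strictly lower triangular parts, paving each separately, and taking a common refinement.

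First, by rescaling we may assume $\|M\|_\infty = 1$, since both sides of the conclusion scale homogeneously in $\|M\|_\infty$ and a partition fragmenting $M/\|M\|_\infty$ by $\varepsilon$ fragments $M$ by $\varepsilon\|M\|_\infty$. Next, write $M = M_L + M_D + M_U$, where $M_L$ has entries $(M_L)_{k,n} = m_{k,n}$ if $n < k$ and $0$ otherwise, $M_U$ has entries $(M_U)_{k,n} = m_{k,n}$ if $n > k$ and $0$ otherwise, and $M_D$ is the diagonal part. Since $m_{k,n} \geq 0$, we have $\|M_L\|_\infty, \|M_U\|_\infty \leq \|M\|_\infty = 1$, and the diagonal part $M_D$ is irrelevant to fragmentation because the sums in the definition of $\varepsilon$-fragmentation run over $n \in A \setminus \{k\}$.

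Now set $l = \lceil 2/\varepsilon \rceil$ and let $l(\varepsilon) = l^2$. Apply Lemma~\ref{ult1} to $M_L$ (which satisfies $(M_L)_{k,n} = 0$ whenever $n \geq k$) to obtain a partition $\{P_1, \dots, P_l\}$ of $\N$ such that each $P_i$ fragments $M_L$ by $1/l \leq \varepsilon/2$. Apply Lemma~\ref{ult2} to $M_U$ (which satisfies $(M_U)_{k,n} = 0$ whenever $n \leq k$) to obtain a partition $\{Q_1, \dots, Q_l\}$ of $\N$ such that each $Q_j$ fragments $M_U$ by $1/l \leq \varepsilon/2$. Let $\mathcal{P} = \{P_i \cap Q_j : 1 \leq i, j \leq l\}$, which has at most $l^2 = l(\varepsilon)$ nonempty members.

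It remains to check that each $R = P_i \cap Q_j$ in this common refinement fragments $M$ by $\varepsilon$. Fix $k \in R$ and split the sum:
\[
\sum_{n \in R \setminus \{k\}} m_{k,n} \;=\; \sum_{\substack{n \in R \\ n < k}} m_{k,n} \;+\; \sum_{\substack{n \in R \\ n > k}} m_{k,n}.
\]
Since $R \subseteq P_i$ and $k \in P_i$, the first sum is at most $\sum_{n \in P_i \setminus \{k\}} (M_L)_{k,n} \leq 1/l \leq \varepsilon/2$; since $R \subseteq Q_j$ and $k \in Q_j$, the second sum is at most $\sum_{n \in Q_j \setminus \{k\}} (M_U)_{k,n} \leq 1/l \leq \varepsilon/2$. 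Adding the two bounds gives $\varepsilon$, as required.

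The argument is essentially routine once Lemmas~\ref{ult1} and~\ref{ult2} are in hand; there is no real obstacle beyond being careful about the scaling and recognising that the fragmentation property is additive with respect to the upper/lower triangular decomposition, since fragmentation is defined via a linear sum over $n$. The only mild subtlety is the accounting that produces the explicit bound $l(\varepsilon) = \lceil 2/\varepsilon \rceil^2$, which depends only on $\varepsilon$ and not on $M$, matching the quantifier structure of the theorem.
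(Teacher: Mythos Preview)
Your proof is correct and follows essentially the same route as the paper's: split $M$ into its strictly lower and strictly upper triangular parts, apply Lemmas~\ref{ult1} and~\ref{ult2} respectively with $l=\lceil 2/\varepsilon\rceil$, and take the common refinement to get at most $l^2$ pieces each of which $\varepsilon\|M\|_\infty$-fragments $M$. Your write-up is in fact slightly more careful than the paper's in matching each lemma to the correct triangular piece and in explicitly noting that the diagonal part is irrelevant to fragmentation.
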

\begin{proof} Fix $\varepsilon>0$ and positive  $\l\in \N$ such that $2/l<\varepsilon$.
Let $M = (m_{k, n})_{k, n \in \mathbb{N}}$. We may assume that it is nonzero.
Let $M_0 = (m^0_{k, n})_{k, n \in \mathbb{N}}$ 
be the matrix defined by 
$m^0_{k, n} = m_{k, n}/\|M\|_\infty $ if and only if $k < n$, otherwise $m^0_{k, n} =0$ 
and let $M_1 = (m^1_{k, n})_{k, n \in \mathbb{N}}$ be
 the matrix defined by $m^1_{k, n} = m_{k, n}/\|M\|_\infty $ if and only if $k > n$, 
 otherwise $m^0_{k, n} =0$. 
 Apply Lemma \ref{ult1} and Lemma \ref{ult2} for $M_0$ and $M_1$ respectively 
  to obtain two partitions of $\N$, namely $\mathcal{P}_0 = \{ P_i^0 : 1\leq i \leq  l \}$ and 
 $\mathcal{P}_1 = \{ P_i^1 : 1\leq i \leq  l \}$ 
 with the properties stated in those lemmas. 
 Consider the family $\mathcal{P} = 
 \{ P_i^0 \cap P^1_j : 1\leq i \leq  l^2\}$.
 It is clear that $M=\|M\|_\infty M_0+\|M\|_\infty M_1$ is $({2\over l})\|M\|_\infty$-fragmented
 by any $A\in \mathcal P$.
\end{proof}

The sources \cite{bourgain} and \cite{bourgain-tzafriri} contain paving lemmas for
operators on $\ell_1^n$. In fact, the number of pieces of
the partition there is better than ours. It is also called a matrix-splitting lemma 
 in Section 4.1 of \cite{handbook}. It is was well known that using a compactness type argument like
 in the proof of Lemma \ref{ult2} one can obtain from these versions a paving lemma for 
 infinite dimensional $\ell_1$. From this using Lemmas \ref{operatorsc0linf} and
 \ref{operatorsl1} one can obtain the above paving lemma for Rosenthal matrices.
 After we proved Theorem \ref{rosenthal-stronger} and realized that  it yields a paving lemma 
 for  operators on $c_0$ we asked 
   B. Johnson and G. Schechtman if it was already known. We are grateful to them for
  providing the above  information  and indicating the references 
  \cite{bourgain, bourgain-tzafriri, handbook}. It should be added that
  a paving lemma for binary matrices like in Section \ref{free} 
has already been noted by P. Erd\"os in 1950 (p. 137 of \cite{erdos}, see Ex. 26.9 of \cite{komjath}
for a proof). We are grateful to P. Komjath for providing us with this reference. In fact
the compactness arguments used to pass from the finite to the infinite matrices could be seen as 
a version of an application of  de Bruijn-Erd\"os theorem which says that the chromatic number of an infinite graph is  $\leq k$ if and only if the chromatic number of   every of its
finite subgraph is $\leq k$, where $k\in \N$ (\cite{dberdos}).

In \cite{damian-rosenthal}, D. Sobota proved that every selective ultrafilter is a
 Rosenthal family and asked whether this is the case for ultrafilters 
 in general  (Question 3.18). The following is 
a positive answer to this question.

\begin{theorem}\label{main-ultrafilter} Every nonprincipal ultrafilter over $\N$ is a Rosenthal family.
In  particular, any $\pi$-base of any nonprincipal ultrafilter is a Rosenthal family.
\end{theorem}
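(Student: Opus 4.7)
The plan is to derive both statements as immediate consequences of the paving lemma (Theorem \ref{rosenthal-stronger}) together with the defining property of an ultrafilter on a finite partition. The key observation is that a Rosenthal family need only intersect each dense set $\mathrm{Ros}_{M,\varepsilon}$, and the paving lemma forces one of finitely many partition pieces, hence a member of any ultrafilter, into that dense set.

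More concretely, fix a nonprincipal ultrafilter $u$ on $\N$, a Rosenthal matrix $M\in\mathbb{M}$, and $\varepsilon>0$. If $\|M\|_\infty=0$, any element of $u$ trivially $\varepsilon$-fragments $M$, so assume $\|M\|_\infty>0$ and apply Theorem \ref{rosenthal-stronger} with parameter $\varepsilon/\|M\|_\infty$. This produces a partition $\{P_1,\dots,P_l\}$ of $\N$ (with $l=l(\varepsilon/\|M\|_\infty)$) such that each $P_i$ $\varepsilon$-fragments $M$. Since $\N=P_1\cup\cdots\cup P_l$ is a finite partition and $u$ is an ultrafilter, there is exactly one $i$ with $P_i\in u$; because $u$ is nonprincipal this $P_i$ is infinite. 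Hence $P_i\in u\cap\mathrm{Ros}_{M,\varepsilon}$, showing that $u$ is a Rosenthal family.

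For the $\pi$-base assertion, let $\mathcal{B}\subseteq[\N]^\omega$ be a $\pi$-base of $u$. Given $M$ and $\varepsilon$ as above, we have produced $P_i\in u$ that $\varepsilon$-fragments $M$; by the defining property of a $\pi$-base, choose $B\in\mathcal{B}$ with $B\subseteq P_i$. Then $B$ is infinite (elements of a $\pi$-base are infinite subsets of $\N$), and the inequality
$$\sum_{n\in B\setminus\{k\}}m_{k,n}\leq\sum_{n\in P_i\setminus\{k\}}m_{k,n}\leq\varepsilon$$
for each $k\in B$ shows that $\varepsilon$-fragmentation is hereditary under taking infinite subsets. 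Consequently $B\in\mathcal{B}\cap\mathrm{Ros}_{M,\varepsilon}$, and $\mathcal{B}$ is a Rosenthal family.

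There is really no obstacle once Theorem \ref{rosenthal-stronger} is in hand; the whole content of the result is the paving lemma, and the ultrafilter step is a one-line pigeonhole. The only point demanding even slight care is the trivial reduction for the zero matrix and the observation that fragmentation is downward-hereditary, which makes the $\pi$-base statement free once the ultrafilter statement is established.
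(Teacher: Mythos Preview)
Your proof is correct and follows essentially the same approach as the paper: apply Theorem \ref{rosenthal-stronger} with parameter $\varepsilon/\|M\|_\infty$ and use the ultrafilter property to select a piece of the resulting partition. Your version is in fact slightly more careful than the paper's (handling the degenerate $\|M\|_\infty=0$ case and spelling out the downward-hereditary step for the $\pi$-base assertion, which the paper leaves implicit).
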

\begin{proof} 
Let $u$ be a nonprincipal ultrafilter on $\N$.
Fix a matrix
 $M=(m_{k, n})_{k, n\in \N}$ and $\varepsilon>0$.   
 Apply Theorem \ref{rosenthal-stronger} for $M$ and
 $\varepsilon/\|M\|_\infty$ obtaining a partition of $\N$ consisting
 of sets which $\varepsilon$-fragment $M$. One  element
of the partition must be a member of the ultrafilter $u$. 
\end{proof}

We note that  a paving lemma
is not true for an arbitrary bounded linear $T: \ell_\infty\rightarrow \ell_\infty$
 satisfying $T(1_{\{n\}})(n)=0$ for each $n\in\N$:  Let $u$  be a
 nonprincipal ultrafilter over $\N$.
 Define $T(f)(k)=\lim_{n\in u}f(n)$ for each $k\in \N$, i.e, 
 the range of $T$ are constant sequences. It is clear that $\|T\|=1$. Since $u$
 is nonprincipal it follows that $T(1_{\{n\}})=0$ for each $n\in \N$.
 Given any partition $\{A_1, ..., A_l\}$ of $\N$ for some $l\in \N$ 
  there is $1\leq i\leq l$ such that $A_i\in u$, so
  $P_{A_i}TP_{A_i}(\chi_{ A_i})=P_{A_i}(\chi_\N)=\chi_{A_i}$, so
  $P_{A_i}TP_{A_i}$ has norm one.

\section{The Rosenthal number and the reaping number}

\begin{definition} A  family $\mathcal F$ of infinite subsets
of $\N$ is called {\sl nowhere reaping} if for every $B\subseteq [\N]^\omega$ satisfying
$\{A\cap B: A\in \F\}\subseteq [\N]^\omega$, there is 
$C_B\subseteq B$ such that $A\cap B\cap C_B$ and $A\cap B\setminus C_B$ are both
infinite for all $A\in \F$.
\end{definition}

Note that a subfamily of  $[\N]^\omega$  of size smaller than $\mathfrak{r}$ are nowhere reaping.

\begin{lemma}\label{hereditary} If $\mathcal F\subseteq [\N]^\omega$ is nowhere reaping,
then it is not a binary Rosenthal family. 
\end{lemma}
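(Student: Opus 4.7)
By Lemma~\ref{equiv-free} and the definition of a binary Rosenthal family, it suffices to construct $f : \N \to \N$ without fixed points such that $f[A] \cap A \ne \emptyset$ for every $A \in \F$; this forces $A \notin {Ros}_f$ for every $A \in \F$, so $\F$ misses the dense family ${Ros}_f \in \mathsf{Ros}_{01}$.

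My plan is to build $f$ as a fixed-point-free involution, i.e.\ a perfect pairing of $\N$, whose pairs ``saturate'' $\F$. First I would apply nowhere reaping iteratively to build a binary tree of traces $\{B_s : s \in 2^{<\omega}\}$ of $\F$, defined by $B_\emptyset = \N$ and, inductively, $B_s = B_{s^\frown 0} \sqcup B_{s^\frown 1}$, the split at each node supplied by applying the nowhere reaping property to the trace $B_s$. At every node both halves remain traces, so $|A \cap B_{s^\frown i}| = \omega$ for every $A \in \F$, every $s \in 2^{<\omega}$ and every $i \in \{0,1\}$. Next, via a back-and-forth enumeration simultaneously exhausting $2^{<\omega}$ and $\N$, I would pick for each $s$ two distinct elements $n_s \in B_{s^\frown 0}$ and $n_s' \in B_{s^\frown 1}$ so that the family $\{n_s, n_s' : s \in 2^{<\omega}\}$ partitions $\N$. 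Setting $f(n_s) = n_s'$ and $f(n_s') = n_s$ then yields a fixed-point-free involution.

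The main obstacle will be verifying that no $A \in \F$ ends up as a transversal of the matching, i.e.\ that for every $A \in \F$ some $s$ has $\{n_s, n_s'\} \subseteq A$. Since $A \cap B_{s^\frown 0}$ and $A \cap B_{s^\frown 1}$ are infinite at every node, plenty of candidate pairs sit inside $A$ at every scale of the tree; the difficulty is to force the back-and-forth to actually select such a pair uniformly in $A \in \F$. My plan for this is to weave additional nowhere reaping splittings inside $B_{s^\frown 0}$ and $B_{s^\frown 1}$ into the back-and-forth, constraining the representatives $n_s, n_s'$ to successively finer sub-traces supplied by further applications of nowhere reaping, so that the final matching cannot be evaded by any $A \in \F$. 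The infinite depth and rich splitting available under nowhere reaping — crucially, at every trace and not only at $\N$ — is what rescues the argument from the naive failure, in which a single order-preserving bijection $C \to \N \setminus C$ coming from one splitter leaves too much room for an $A \in \F$ to avoid every matched pair.
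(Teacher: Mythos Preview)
Your reduction is correct: it suffices to find a fixed-point-free $f : \N \to \N$ with $f[A] \cap A \neq \emptyset$ for all $A \in \F$. But your construction has a genuine gap precisely where you flag the ``main obstacle'': you never explain how the back-and-forth guarantees that every $A \in \F$ contains some matched pair $\{n_s, n_s'\}$. The tree alone does not do this. Knowing that $A \cap B_{s^\frown 0}$ and $A \cap B_{s^\frown 1}$ are infinite for every $s$ does not prevent the back-and-forth from choosing, at every stage, at least one of $n_s, n_s'$ outside $A$; indeed, the need to exhaust $\N$ will often force exactly such choices. Your proposed fix --- ``weave additional nowhere reaping splittings \ldots\ constraining the representatives to successively finer sub-traces'' --- is not an argument: finer sub-traces are still only traces of all of $\F$ simultaneously, and you give no mechanism by which a single choice of $(n_s, n_s')$ lands inside every $A$ at once, nor any pigeonhole forcing some stage to succeed for a given $A$. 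Since $\F$ need not be countable, you also cannot handle its members one at a time during the back-and-forth.

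The paper's proof sidesteps this entirely by abandoning the involution requirement. It builds, by iterated nowhere reaping, pairwise disjoint infinite sets $B_0, B_1, \ldots \subseteq \N$ such that every $B_n$ meets every $A \in \F$; then it sets $f(k) = n$ for $k \in B_n \setminus \{n\}$ (and $f(k) = k+1$ elsewhere). This $f$ is infinite-to-one and achieves the much stronger conclusion $f[A] = \N$ for every $A \in \F$, from which $f[A] \cap A \neq \emptyset$ is trivial. The point is that making $f$ highly non-injective lets each target $n$ be hit from inside every $A$ via $B_n \cap A$ --- exactly what an involution cannot do, since an involution has singleton fibers and $f^{-1}(n) \subseteq A$ becomes a one-point condition you cannot satisfy uniformly in $A$. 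If you want to rescue the involution route you need a genuinely new idea; otherwise, drop injectivity and the argument becomes short.
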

\begin{proof}
Let  $\mathcal F\subseteq [\N]^\omega$ be nowhere reaping.
 We will construct $f:\N\rightarrow \N$ with no fixed points such that
 $f[A]=\N$ for every $A\in\F$. This will show that 
 $\F$ is not a binary Rosenthal family.

 First, by recursion in $n\in \N$, we construct
 a pairwise disjoint family $\{ B_n : n \in \N\}$ of infinite subsets of $\N$ such that
 $B_n\cap A$ and
  $A \setminus (\bigcup_{i \leq n}B_i)$ are infinite
     for each $A\in\F$ and  for every $n\in \N$. The existence of $B_0$ 
     follows from the fact that $\F$ is not reaping. The
inductive hypothesis and the fact that $\F$ is nowhere reaping 
applied below $\N\setminus\bigcup_{i\leq n}B_i$ produces the next set $B_{n+1}$ 
in the inductive step of
the construction of $\{ B_n : n \in \N\}$.

 This induces an entire function $f:\N\rightarrow\N$ defined by
 $f(k)=n$ if $n\not=k\in B_n$ and $f(k)=k+1$ if $k$ is not in any $B_n$ or if $k\in B_k$. Clearly
 $f$ has no fixed points and 
 $f^{-1}[\{n\}]\supseteq (B_n\setminus\{n\})\cap A\not=\emptyset$ 
  for every $A\in \F$. It follows that $f[A]=\N$ for every $A\in \F$
 as required.

\end{proof}

\begin{corollary}\label{filter-ultra} If $\F$ is a filter and a Rosenthal family, then there
is  an infinite $A\subseteq \N$ such that
$\{F\cap A: F\in \F\}\cup\{\N\setminus A\}$ generates an ultrafilter.
\end{corollary}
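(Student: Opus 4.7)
The plan is to deduce the corollary from Lemma \ref{hereditary} via a direct extension-to-ultrafilter argument. Without loss of generality I would first replace $\F$ by the filter generated by $\F \cup Fr$, where $Fr$ is the Fr\'echet filter of cofinite subsets of $\N$. This is still a filter of infinite sets (for any $F \in \F$ and cofinite $K$, the set $F \cap K$ is infinite, so the finite intersection property is preserved), and it is still a Rosenthal family (any enlargement inside $[\N]^\omega$ of a Rosenthal family is itself Rosenthal, directly from the definition of a generic family).

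Since $\F$ is in particular a binary Rosenthal family by Proposition \ref{inequalities}(\ref{ros01-ros}), the contrapositive of Lemma \ref{hereditary} yields that $\F$ is not nowhere reaping. This provides an infinite $A \subseteq \N$ such that $F \cap A$ is infinite for every $F \in \F$ and such that for every $C \subseteq A$ some $F \in \F$ has $F \cap A \cap C$ finite or $F \cap A \setminus C$ finite. I would then take this $A$ and verify that the filter
$$\F^* = \{X \subseteq \N : \exists F \in \F,\ F \cap A \subseteq X\},$$
which is exactly the filter generated by $\{F \cap A : F \in \F\} \cup \{A\}$ (equivalently, by $\F \cup \{A\}$), is an ultrafilter on $\N$; this is the natural reading of the displayed family in the statement.

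To verify this, fix $X \subseteq \N$ arbitrary and set $C = X \cap A \subseteq A$. By the chosen property of $A$ there is $F \in \F$ with either $F \cap A \cap X$ or $F \cap A \setminus X$ finite; call this finite set $S$. Because we arranged $\F$ to contain every cofinite set, we have $F' := F \setminus S \in \F$, and in the two respective cases one checks directly that $F' \cap A \subseteq \N \setminus X$ or $F' \cap A \subseteq X$. Hence $\N \setminus X \in \F^*$ or $X \in \F^*$, so $\F^*$ is an ultrafilter. The only nontrivial step is the invocation of Lemma \ref{hereditary}; the rest is bookkeeping, with the Fr\'echet-filter reduction used precisely to upgrade ``finite intersection'' to ``empty intersection'' in the reaping condition --- this is the main (minor) obstacle.
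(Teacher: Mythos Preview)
Your proof is correct and follows the same route as the paper's, namely the contrapositive of Lemma~\ref{hereditary} to produce the set $A$ on which $\F$ cannot be split, followed by the observation that the trace filter is then an ultrafilter. Your version is in fact more careful: the paper's three-line argument jumps directly to ``for any $C\subseteq B$ either $C\in\F$ or $B\setminus C\in\F$'' without the Fr\'echet-filter reduction you use to turn ``finite'' into ``empty'', and you also correctly flag that the displayed family in the statement should involve $A$ rather than $\N\setminus A$ for the conclusion to be nondegenerate.
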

\begin{proof} By Lemma \ref{hereditary} there is infinite $B\subseteq\N$
such that $\{F\cap  B: F\in \F\}\subseteq [\N]^\omega$ and
there is no infinite $C_B\subseteq B$ which splits $\{F\cap  B: F\in \F\}$.
So for any $C\subseteq B$ either $C\in \F$ or $B\setminus C\in \F$, so
the corollary follows. 

\end{proof}

Recall that a family $\F\subseteq[\N]^\omega$ has {\sl the strong finite intersection
property} if the intersection of every finite subfamily of $\F$ is infinite.

\begin{corollary}\label{filter}
No (binary) Rosenthal family of cardinality smaller than $\mathfrak{u}$ 
has the strong finite intersection property.
\end{corollary}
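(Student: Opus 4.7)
The plan is to reduce the statement to Corollary~\ref{filter-ultra} by passing from $\F$ to the filter it generates. Suppose toward a contradiction that $\F$ is a binary Rosenthal family with the strong finite intersection property and $|\F|<\mathfrak u$. The (binary) Rosenthal property is preserved under enlargement---any family that meets every dense set in $\mathsf{Ros}_{01}$ continues to do so after enlargement---so the filter $\F^{*}$ generated by $\F$ is still a binary Rosenthal family and is, by construction, a filter. I would then apply Corollary~\ref{filter-ultra} to $\F^{*}$ to obtain an infinite $A\subseteq\N$ such that $\{F\cap A:F\in\F^{*}\}\cup\{\N\setminus A\}$ generates an ultrafilter $u$ on $\N$.

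Next, I would observe that this subbase can be shrunk back to the original family: for any $G\in\F^{*}$ there are $F_1,\ldots,F_n\in\F$ with $F_1\cap\cdots\cap F_n\subseteq G$, so $G\cap A\supseteq(F_1\cap A)\cap\cdots\cap(F_n\cap A)$, which shows that already $\{F\cap A:F\in\F\}\cup\{\N\setminus A\}$ is a subbase for $u$. Closing this family under finite intersections yields a base for $u$ of cardinality at most $|\F|+\aleph_0=|\F|$, where I use the easy fact that a binary Rosenthal family must be infinite (given a finite $\F=\{A_1,\dots,A_n\}$, one chooses distinct points $a_i,b_i\in A_i$ and extends by a fixed-point-free map on the complement to obtain an $f:\N\to\N$ with $b_i\in f[A_i]\cap A_i$, so that $\F\cap{Ros}_f=\emptyset$).

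Finally, $u$ is nonprincipal: a principal ultrafilter $\{B:n_0\in B\}$ would require $n_0$ to belong both to $\N\setminus A$ and to some $F\cap A$, which is impossible. Hence $u$ is a nonprincipal ultrafilter on $\N$ of character strictly less than $\mathfrak u$, contradicting the definition of $\mathfrak u$. I do not anticipate a genuine obstacle here: the real content is already inside Corollary~\ref{filter-ultra} (which in turn rests on Lemma~\ref{hereditary}), and what remains is essentially cardinality bookkeeping plus the remark that enlarging a Rosenthal family keeps it Rosenthal. The only mildly delicate point is verifying that the resulting base has size at most $|\F|$, which reduces to the infinitude of $\F$ noted above.
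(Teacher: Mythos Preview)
Your proof is correct and follows the same route as the paper's (pass to the generated filter, apply Corollary~\ref{filter-ultra}, obtain a nonprincipal ultrafilter with a base of size $<\mathfrak u$); you simply spell out details the paper leaves implicit. One minor simplification: the verification that $\F$ is infinite is unnecessary, since even a finite $\F$ would yield a base of size at most $\aleph_0<\mathfrak u$.
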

\begin{proof} Suppose that $\F$ 
of cardinality smaller than $\mathfrak{u}$ has the strong finite intersection property.
Let $\F'$ be the filter generated by $\F$. By Corollary \ref{filter-ultra}
we would obtain a nonprincipal ultrafilter generated by less than $\mathfrak{u}$ elements.
\end{proof}

\begin{proposition} $\mathfrak{r} \leq \mathfrak{ros}_{01}$. \label{r-ros01}
\end{proposition}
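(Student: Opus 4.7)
The plan is to derive this immediately by combining Lemma \ref{hereditary} with the observation (made as a remark just after the definition of nowhere reaping) that every family of size strictly less than $\mathfrak{r}$ is nowhere reaping. The whole content of the proposition is packaged into the contrapositive: if $\F$ has too few elements, it cannot even split nontrivially beneath arbitrary $B$, and then the construction of a fixed-point-free $f$ with $f[A]=\N$ for all $A\in\F$ from Lemma \ref{hereditary} kicks in to show $\F\notin \mathsf{Ros}_{01}$.

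First I would verify the promised remark in detail. Let $\F\subseteq [\N]^\omega$ satisfy $|\F|<\mathfrak{r}$, and let $B\in[\N]^\omega$ be such that $A\cap B$ is infinite for every $A\in\F$. The family $\{A\cap B:A\in\F\}$ is a family of infinite subsets of $\N$ of cardinality at most $|\F|<\mathfrak{r}$, so by definition of the reaping number there exists $C\subseteq\N$ which splits it, i.e.\ $C\cap(A\cap B)$ and $(A\cap B)\setminus C$ are both infinite for every $A\in\F$. Setting $C_B:=C\cap B\subseteq B$ and noting that $A\cap B\setminus C_B=A\cap B\setminus C$ (because $A\cap B\subseteq B$), we see $C_B$ witnesses the nowhere-reaping property at $B$. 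As $B$ was arbitrary, $\F$ is nowhere reaping.

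Next I apply Lemma \ref{hereditary} directly: any nowhere reaping $\F$ fails to be a binary Rosenthal family, since that lemma produces a fixed-point-free $f:\N\to\N$ with $f[A]=\N$ for every $A\in\F$, and such $f$ guarantees $f[A]\cap A\supseteq A\neq\emptyset$, so $A\notin Ros_f$ for any $A\in\F$, i.e.\ $\F\cap Ros_f=\emptyset$. Combining the two steps, every $\F$ with $|\F|<\mathfrak{r}$ fails to meet some $Ros_f\in\mathsf{Ros}_{01}$, hence every binary Rosenthal family has cardinality at least $\mathfrak{r}$, which is exactly $\mathfrak{r}\leq \mathfrak{ros}_{01}$.

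There is essentially no obstacle here: the combinatorial heavy lifting was already performed in Lemma \ref{hereditary} (the recursive construction of the pairwise disjoint sets $B_n$ using nowhere reaping, and then the definition of $f$). The present proposition is a short corollary, with the only care needed being the bookkeeping in the remark that cardinality $<\mathfrak{r}$ implies nowhere reaping in the full sense of the definition (splitter \emph{contained in} $B$, not just any splitter of $\{A\cap B:A\in\F\}$), which is handled by the trivial intersection trick $C_B=C\cap B$ above.
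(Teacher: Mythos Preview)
Your proof is correct and follows exactly the paper's approach: any family of size less than $\mathfrak{r}$ is nowhere reaping (the remark after the definition), and then Lemma~\ref{hereditary} applies. Your write-up simply unpacks both steps in more detail than the paper's two-line proof, including the bookkeeping that the splitter can be taken inside $B$ via $C_B=C\cap B$.
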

\begin{proof}
Suppose that $\F$ has cardinality smaller then $\mathfrak{r}$.
Then it is nowhere reaping and so by 
Lemma \ref{hereditary} the family
$\F$ is not a binary Rosenthal family. 
\end{proof}

\begin{proposition}\label{ros-r}
$\mathfrak{ros}  \leq \mathfrak{r}$.
\end{proposition}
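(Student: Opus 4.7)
The plan is to combine Theorem~\ref{main-ultrafilter} with the classical identification of $\mathfrak{r}$ as the minimum $\pi$-character of a nonprincipal ultrafilter on $\N$. Recall that the $\pi$-character $\pi\chi(\mathcal U)$ of $\mathcal U$ is the minimum cardinality of a $\pi$-base of $\mathcal U$, and that by a theorem of Balcar and Simon (see \cite{blass}) one has
$$\mathfrak{r}\;=\;\pi\mathfrak u\;:=\;\min\{\pi\chi(\mathcal U):\mathcal U\text{ is a nonprincipal ultrafilter on }\N\}.$$

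First I would note the easy direction $\mathfrak{r}\leq\pi\mathfrak u$: every $\pi$-base $\mathcal B$ of any nonprincipal ultrafilter $\mathcal U$ on $\N$ is automatically a reaping family, because for any $A\subseteq\N$ exactly one of $A,\N\setminus A$ lies in $\mathcal U$ and therefore contains some $B\in\mathcal B$, so $A$ cannot split $B$. The harder direction $\pi\mathfrak u\leq\mathfrak{r}$, which I would just cite from \cite{blass}, supplies a nonprincipal ultrafilter $\mathcal U_0$ on $\N$ together with a $\pi$-base $\mathcal B_0\subseteq[\N]^\omega$ of cardinality exactly $\mathfrak{r}$.

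Finally I would invoke Theorem~\ref{main-ultrafilter}, the consequence of the paving lemma of Section~3, which asserts that every $\pi$-base of every nonprincipal ultrafilter on $\N$ is a Rosenthal family. Applied to $\mathcal B_0$, this immediately produces a Rosenthal family of cardinality $\mathfrak{r}$, and hence $\mathfrak{ros}\leq\mathfrak{r}$. The main obstacle is the Balcar--Simon identification $\mathfrak{r}=\pi\mathfrak u$; once this is granted from \cite{blass}, the rest is an immediate pipeline through the paving lemma and Theorem~\ref{main-ultrafilter}. A direct combinatorial approach avoiding $\pi\mathfrak u$ would need to show that a reaping family of size $\mathfrak{r}$ can be chosen so as to meet every finite partition of $\N$ of prescribed size in an almost contained member; this property does not hold for arbitrary reaping families, so the efficient route is to pass through ultrafilters with small $\pi$-character.
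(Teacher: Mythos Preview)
Your argument is correct, and in fact it is precisely the alternative proof the paper itself sketches in the paragraph immediately following its main proof: invoke Balcar--Simon to get a $\pi$-base of size $\mathfrak r$ for some nonprincipal ultrafilter, then apply Theorem~\ref{main-ultrafilter}. (The paper cites \cite{balcar-simon} directly rather than \cite{blass} for the identification $\mathfrak r=\pi\mathfrak u$.)

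The paper's primary proof, however, takes exactly the direct combinatorial route you dismiss in your last paragraph. It observes that one can build a \emph{hereditarily reaping} family $\F\subseteq[\N]^\omega$ of size $\mathfrak r$, meaning one closed under finite modifications such that $\F\cap\mathcal P(A)$ is reaping for every $A\in\F$ (this is easy; see \cite{reaping}). Given a Rosenthal matrix $M$ and $\varepsilon>0$, the paving lemma (Theorem~\ref{rosenthal-stronger}) produces a finite partition $\{A_1,\dots,A_l\}$ of $\N$ with each piece $\varepsilon$-fragmenting $M$; one then iteratively applies the reaping property of $\F$ below successive members to find some $F\in\F$ contained in a single $A_i$. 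So the property you said ``does not hold for arbitrary reaping families'' is readily arranged for a suitably chosen one of the same minimal size.

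The trade-off: your route is shorter once Balcar--Simon is granted, but that theorem is itself nontrivial. The paper's route is more self-contained, using only the paving lemma already established in Section~3 plus an elementary construction, and it makes transparent exactly which feature of a reaping family (hereditariness) does the work.
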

\begin{proof}
 Let $\F\subseteq [\N]^\omega$, of size $\mathfrak{r}$,  be such a reaping family 
 that for every $A\in \F$, 
the set $\F\cap \mathcal{P}(A)$ 
is a reaping family of size $\mathfrak r$ and $\F$ is closed under finite modifications. Such family
 can be easily constructed (see 3.7 of \cite{reaping} where such families are called hereditarily reaping).

Let $M \in \mathbb{M}$ and $\varepsilon>0$. 
We will see that 
 there is an $F \in \F$ such that $M$ is $\varepsilon$-fragmented by $F$.
 Let $\{A_1, ..., A_{l}\}$ be a partition of $\N$ 
 such that each piece $\varepsilon$-fragments $M$ 
 which exists by Theorem \ref{rosenthal-stronger}.
 Using the fact
 that $\mathcal F$ is 
 reaping and closed under finite modifications find $F_1 \in \F$ such that either $F_1$
  is disjoint with $A_1$ or $F_1$ is contained in $A_1$. 
  If $F_1 \subseteq A_1$ then $F=F_1$ works. 
  Otherwise it is possible to pick an infinite
  $F_2 \subseteq F_1$ in $\F$ such that either 
  $F_2 \subseteq A_2$ or $F_2$ is disjoint from $A_1 \cup A_2$.
   If we follow this process, 
   it is evident that we will eventually 
   find $F_i \in \F$ for 
   $1\leq i \leq l$ such that $F_i \subseteq A_i$. Then 
    $F=F_i$ is as required.
    \end{proof}

An alternative proof of Proposition \ref{ros-r} is to use
a theorem of Balcar and Simon from \cite{balcar-simon} which says that
the reaping number $\mathfrak r$ is the minimal 
size of a $\pi$-base of a nonprincipal ultrafilter on $\N$. 
Such a $\pi$-base is a Rosenthal family by the second part of  Theorem \ref{main-ultrafilter}.

\begin{theorem}\label{main-theorem}
The Rosenthal number $\mathfrak {ros}$ is equal to the reaping number
$\mathfrak r$.
\end{theorem}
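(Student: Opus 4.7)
The plan is simply to chain together the two inequalities that have been established in this section together with a trivial inclusion from Section 2. Specifically, from Proposition \ref{r-ros01} we have $\mathfrak{r}\leq\mathfrak{ros}_{01}$, from Proposition \ref{inequalities} (\ref{ros01-ros}) we have $\mathfrak{ros}_{01}\leq\mathfrak{ros}$, and from Proposition \ref{ros-r} we have $\mathfrak{ros}\leq\mathfrak{r}$. Concatenating these yields $\mathfrak{r}\leq\mathfrak{ros}_{01}\leq\mathfrak{ros}\leq\mathfrak{r}$, hence equality throughout.

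Conceptually, the lower bound $\mathfrak{r}\leq\mathfrak{ros}$ is the combinatorial half: any family $\F$ of size strictly less than $\mathfrak{r}$ is nowhere reaping, and Lemma \ref{hereditary} produces, by iterated splitting, a fixed-point-free $f:\N\to\N$ with $f[A]=\N$ for all $A\in\F$, so $\F$ fails to meet ${Ros}_f$; since ${Ros}_f\in\mathsf{Ros}_{01}\subseteq\mathsf{Ros}$, $\F$ is not a Rosenthal family. The upper bound $\mathfrak{ros}\leq\mathfrak{r}$ is the analytic half and is where the paving lemma of Theorem \ref{rosenthal-stronger} is used: starting from a hereditarily reaping family $\F$ of size $\mathfrak{r}$ (closed under finite modifications, with $\F\cap\wp(A)$ still reaping for each $A\in\F$), given a Rosenthal matrix $M$ and $\varepsilon>0$ one partitions $\N$ into $l(\varepsilon)$ pieces each $\varepsilon\|M\|_\infty$-fragmenting $M$, and uses the hereditary reaping property to descend finitely many times and locate some $F\in\F$ contained in one of the pieces.

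The one thing to point out in the write-up is that no additional argument is needed beyond invoking these three facts — the real substance is already in Lemma \ref{hereditary} (driving the lower bound) and Theorem \ref{rosenthal-stronger} (driving the upper bound via Proposition \ref{ros-r}). Since everything has been done, there is no genuine obstacle; the only choice is stylistic, namely whether to restate the chain of inequalities explicitly or to simply reference the three propositions. I would make the chain explicit for the reader's convenience.

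\begin{proof}
By Proposition \ref{r-ros01}, $\mathfrak{r}\leq\mathfrak{ros}_{01}$. By part (\ref{ros01-ros}) of Proposition \ref{inequalities}, $\mathfrak{ros}_{01}\leq\mathfrak{ros}$. Finally, by Proposition \ref{ros-r}, $\mathfrak{ros}\leq\mathfrak{r}$. Combining these three inequalities yields
\[
\mathfrak{r}\leq\mathfrak{ros}_{01}\leq\mathfrak{ros}\leq\mathfrak{r},
\]
so $\mathfrak{ros}=\mathfrak{r}$ (and incidentally $\mathfrak{ros}_{01}=\mathfrak{r}$ as well, which is Theorem \ref{ros01-main}).
\end{proof}
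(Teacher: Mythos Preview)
Your proof is correct and essentially identical to the paper's own proof: both chain together Proposition \ref{r-ros01}, Proposition \ref{inequalities} (\ref{ros01-ros}), and Proposition \ref{ros-r} to obtain $\mathfrak{r}\leq\mathfrak{ros}_{01}\leq\mathfrak{ros}\leq\mathfrak{r}$. Your additional conceptual commentary on the roles of Lemma \ref{hereditary} and Theorem \ref{rosenthal-stronger} is accurate and matches the paper's underlying structure.
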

\begin{proof}
By Propositions \ref{inequalities} (\ref{ros01-ros}),
\ref{ros-r} and \ref{r-ros01} we have $\mathfrak{r}\leq \mathfrak{ros}_{01}\leq
\mathfrak{ros}\leq\mathfrak{r}$.
\end{proof}

The argument in the above proof also gives:

\begin{theorem}\label{ros01-main}$  $
$\mathfrak {ros}_{01}=\mathfrak{r}$.
\end{theorem}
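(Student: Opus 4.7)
The plan is to obtain $\mathfrak{ros}_{01}=\mathfrak{r}$ by squeezing $\mathfrak{ros}_{01}$ between $\mathfrak{r}$ on both sides using results already established in the excerpt, namely Propositions \ref{r-ros01}, \ref{inequalities}(\ref{ros01-ros}), and \ref{ros-r}. No new construction should be required: the paving lemma (Theorem \ref{rosenthal-stronger}) and the nowhere reaping machinery of Section~4 together already pin down both bounds.

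First I would record the lower bound $\mathfrak{r}\leq\mathfrak{ros}_{01}$. This is precisely Proposition \ref{r-ros01}: a family $\F\subseteq[\N]^\omega$ of cardinality $<\mathfrak{r}$ is automatically nowhere reaping, and Lemma \ref{hereditary} produces a fixed-point-free $f:\N\to\N$ with $f[A]=\N$ for every $A\in\F$, so that $\F$ misses the dense set ${Ros}_f$ witnessing failure of being a binary Rosenthal family.

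Next I would obtain the upper bound $\mathfrak{ros}_{01}\leq \mathfrak{r}$ by chaining two earlier inequalities: $\mathfrak{ros}_{01}\leq\mathfrak{ros}$ from Proposition \ref{inequalities}(\ref{ros01-ros}), which rests on the observation that $\mathsf{Ros}_{01}\subseteq\mathsf{Ros}$ via the identification ${Ros}_f={Ros}_{M_f,1/2}$ (Lemma \ref{equiv-free}); and $\mathfrak{ros}\leq\mathfrak{r}$ from Proposition \ref{ros-r}, whose proof is where the real content of the upper bound lies, through hereditarily reaping families of size $\mathfrak{r}$ together with the partition into $l(\varepsilon)$ fragmenting pieces given by Theorem \ref{rosenthal-stronger}. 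Putting the two chains together yields $\mathfrak{r}\leq\mathfrak{ros}_{01}\leq\mathfrak{ros}\leq\mathfrak{r}$, hence the desired equality.

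There is essentially no genuine obstacle at this point: all the heavy lifting (the paving lemma, the nowhere-reaping argument, and the subsumption of $\mathsf{Ros}_{01}$ inside $\mathsf{Ros}$) has already been done. If one preferred a direct proof avoiding the detour through $\mathfrak{ros}$, one could instead apply the argument of Proposition \ref{ros-r} verbatim to the matrix $M_f$ associated with a fixed-point-free $f$, since $M_f$ is already a Rosenthal matrix and Theorem \ref{rosenthal-stronger} provides a finite partition $\varepsilon$-fragmenting it for any $\varepsilon<1/2$; a hereditarily reaping family of size $\mathfrak{r}$ must then contain a member sitting inside one piece of this partition, and by Lemma \ref{equiv-free} such a member lies in ${Ros}_f$.
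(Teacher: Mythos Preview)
Your proposal is correct and follows exactly the paper's approach: the paper observes that the chain $\mathfrak{r}\leq \mathfrak{ros}_{01}\leq \mathfrak{ros}\leq\mathfrak{r}$ established in the proof of Theorem~\ref{main-theorem} (via Propositions~\ref{r-ros01}, \ref{inequalities}(\ref{ros01-ros}), and \ref{ros-r}) already yields $\mathfrak{ros}_{01}=\mathfrak{r}$ as a by-product.
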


\begin{theorem}\label{rosl1-main}
$\mathfrak{ros}(\ell_1)=\mathfrak{r}$.
\end{theorem}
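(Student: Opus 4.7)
The plan is to obtain this result as an immediate consequence of two previously established facts in the paper. First, Proposition \ref{inequalities-operators}(\ref{ros-rosl1}) gives the equality $\mathfrak{ros}(\ell_1) = \mathfrak{ros}$, which itself rests on Proposition \ref{inclusions}(\ref{ros=rosl1}): a matrix $M$ defines an operator $T_M \colon c_0 \to \ell_\infty$ precisely when its transpose $M'$ defines an operator on $\ell_1$, and moreover $\|M\|_\infty = \|M'\|_1$ and $\|P_A T_M P_A\| = \|P_A T_{M'} P_A\|$. This identifies the collections of dense sets $\mathsf{Ros}(c_0,\ell_\infty)$ and $\mathsf{Ros}(\ell_1)$, hence their generic cardinal invariants coincide.

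Second, I invoke Theorem \ref{main-theorem}, which asserts $\mathfrak{ros} = \mathfrak{r}$. Chaining the two equalities yields
\[
\mathfrak{ros}(\ell_1) = \mathfrak{ros} = \mathfrak{r},
\]
as required.

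Since all the substantive work (the transpose duality between operators $c_0 \to \ell_\infty$ and operators on $\ell_1$, the paving Theorem \ref{rosenthal-stronger} giving $\mathfrak{ros} \leq \mathfrak{r}$, and the nowhere reaping argument giving $\mathfrak{r} \leq \mathfrak{ros}_{01} \leq \mathfrak{ros}$) has been carried out earlier, there is no real obstacle here; the only thing to verify is that the citations assemble cleanly. The proof is a two-line appeal to the named results.
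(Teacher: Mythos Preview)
Your proof is correct and matches the paper's own argument exactly: the paper's proof simply cites Theorem \ref{main-theorem} and Proposition \ref{inequalities-operators}(\ref{ros-rosl1}) to obtain $\mathfrak{ros}(\ell_1)=\mathfrak{ros}=\mathfrak{r}$. The additional context you supply (the transpose duality from Proposition \ref{inclusions}(\ref{ros=rosl1}) and the ingredients behind Theorem \ref{main-theorem}) is accurate but more than the paper itself records.
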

\begin{proof} Use Theorem \ref{main-theorem} and Proposition \ref{inequalities-operators}
(\ref{ros-rosl1}).
\end{proof}

\section{$c_0$-Rosenthal numbers}

In this section we calculate the $c_0$-Rosenthal number $\mathfrak{ros}(c_0)$ and the
binary $c_0$-Rosenthal number $\mathfrak{ros}_{01}(c_0)$ - see the introduction for the definitions.
First let us recall some terminology.
If $f,g \in \N^\N$, then $g$ eventually dominates $f$, 
  denoted by $f \leq^* g$, if there is a $n \in \N$ such that 
  for every $k > n$, $f(k) \geq g(k)$. 
  $D \subseteq \N^\N$ is a dominating family
   if every $f \in \N^\N$ is dominated by some member of $D$. 
   The dominating number $\mathfrak{d}$
    is the smallest size of a dominating family. 
  Following Definition 2.9 of \cite{blass} an interval partition is a partition of $\N$ into (infinitely
many) finite intervals $I_n$ where $n\in \N$.  We will assume that the intervals are
numbered in the natural order, so that, if $i_n$ is the left endpoint of $I_n$ then
$i_0 = 0$ and $I_n = [i_n , i_{n+1} )$. We say that the interval partition $\{I_n : n \in  \N\}$
dominates another interval partition $\{J_n : n \in \N\}$ if for all but finitely many $n\in \N$
there is $k\in \N$ such that $J_k\subseteq I_n$.   By Theorem 2.10 of \cite{blass} the
 dominating number $\mathfrak d$ is equal to the smallest cardinality 
 of a family of interval partitions dominating 
 all interval partitions. By a $c_0$-matrix we will mean a matrix of a linear
 operator on $c_0$ in the sense of Lemma \ref{operatorsc0}.

\begin{proposition}\label{rosc0-d}
$\mathfrak{ros}({c_0})\leq \mathfrak{d}$.
\end{proposition}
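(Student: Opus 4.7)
The plan is to produce a $c_0$-Rosenthal family of size $\mathfrak d$ by exploiting the characterization of $\mathfrak d$ as the minimal size of a family of interval partitions dominating every interval partition (Theorem 2.10 of \cite{blass}, quoted above). To each element of such a family I will attach one set, together with all its tails.

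Fix a $c_0$-matrix $M=(m_{k,n})_{k,n\in\N}$, normalized so that $\|M\|_\infty\le 1$, and $\varepsilon>0$. Using that each row of $M$ lies in $\ell_1$ with bounded norm, define
\[
f_{M,\varepsilon}(k)=\min\{N\in\N:\textstyle\sum_{n\ge N}m_{k,n}<\varepsilon/2\}.
\]
Using Lemma~\ref{operatorsc0}, the columns of $M$ tend to $0$, so finite sums of columns also tend to $0$; hence
\[
g_{M,\varepsilon}(k)=\min\{N\in\N:\forall j\ge N,\ \textstyle\sum_{i\le k}m_{j,i}<\varepsilon/2\}
\]
is well defined. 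Let $h_{M,\varepsilon}$ be any nondecreasing function dominating $\max(f_{M,\varepsilon},g_{M,\varepsilon})$, and form the interval partition $\mathcal J_{M,\varepsilon}=\{J_n\}$ with $j_0=0$ and $j_{n+1}=h_{M,\varepsilon}(j_n)+1$.

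Fix once and for all a family $\{\mathcal I^\alpha:\alpha<\mathfrak d\}$ of interval partitions dominating every interval partition, and for each $\alpha$ let $A^\alpha$ be the set of left endpoints of $\mathcal I^\alpha$. I will show that
\[
\mathcal G=\{A^\alpha\setminus[0,N]:\alpha<\mathfrak d,\ N\in\N\},
\]
which has cardinality $\mathfrak d\cdot\aleph_0=\mathfrak d$, is a $c_0$-Rosenthal family. Given $M$ and $\varepsilon$ as above, pick $\alpha$ with $\mathcal I^\alpha$ dominating $\mathcal J_{M,\varepsilon}$; then for all but finitely many $n$, some $J_{k(n)}$ is contained in $I_n^\alpha=[i_n^\alpha,i_{n+1}^\alpha)$. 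Since $h_{M,\varepsilon}$ is nondecreasing, this yields $i_{n+1}^\alpha\ge j_{k(n)+1}=h_{M,\varepsilon}(j_{k(n)})+1>h_{M,\varepsilon}(i_n^\alpha)$ for all $n$ past some $N_0$.

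For $k=i_n^\alpha\in A^\alpha\setminus[0,i_{N_0}^\alpha)$ with $n>N_0$, split the sum
\[
\sum_{m\in A^\alpha\setminus[0,i_{N_0}^\alpha),\,m\ne k}m_{k,m}
\]
into $m>k$ and $m<k$. The first part sits above $i_{n+1}^\alpha>h_{M,\varepsilon}(k)\ge f_{M,\varepsilon}(k)$, so it is bounded by $\varepsilon/2$ by definition of $f_{M,\varepsilon}$. The largest $m<k$ in the sum is at most $i_{n-1}^\alpha$, and $k>h_{M,\varepsilon}(i_{n-1}^\alpha)\ge g_{M,\varepsilon}(i_{n-1}^\alpha)$, so the second part is bounded by $\sum_{i\le i_{n-1}^\alpha}m_{k,i}<\varepsilon/2$ by definition of $g_{M,\varepsilon}$. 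Hence $A^\alpha\setminus[0,i_{N_0}^\alpha)\in{Ros}_{M,\varepsilon}$ (recall Lemma~\ref{equiv-fragmenting} and the identification $\mathsf{Ros}(c_0)\subseteq\mathsf{Ros}$ from Proposition~\ref{inclusions}).

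The main obstacle is to package the row-tail control (coming from boundedness of $\|M\|_\infty$) and the column-decay control (specific to $c_0$-matrices) into a single nondecreasing function $h_{M,\varepsilon}$ whose growth can then be confronted with the language of interval partitions, and to match that confrontation exactly to the definition of $\mathfrak d$ via dominating interval partitions. Once $h_{M,\varepsilon}$ is in hand, closing the chosen family under finite initial segments circumvents the minor annoyance that the first few elements of $A^\alpha$ need not satisfy the fragmentation inequality.
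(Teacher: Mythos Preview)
Your proof is correct and follows essentially the same route as the paper: both build the $c_0$-Rosenthal family from the left endpoints of a dominating family of interval partitions, closed under removing finite initial pieces, and both verify $\varepsilon$-fragmentation by combining a row-tail bound (coming from $\|M\|_\infty<\infty$) with a column-decay bound (coming from Lemma~\ref{operatorsc0}); the only cosmetic difference is that the paper bounds individual entries $m_{k,n}\le\varepsilon/2^{n+2}$ for large $k$ and then sums a geometric series, whereas you bound the finite column sum $\sum_{i\le k}m_{j,i}$ directly. One small repair is needed: you should also require $h_{M,\varepsilon}(k)\ge k$ (otherwise $j_{n+1}=h_{M,\varepsilon}(j_n)+1$ need not exceed $j_n$ and $\mathcal J_{M,\varepsilon}$ is not an interval partition), and note that the endpoint $k=i_{N_0}^\alpha$, which lies in your set but is excluded by your condition ``$n>N_0$'', is harmless since its lower sum is empty.
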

\begin{proof}  
Note that there is a family $\mathcal A=\{A_\alpha: \alpha<\mathfrak d\}$ of
infinite subsets of $\N$ such that for every function $f:\N\rightarrow\N$ 
there is $\alpha<\mathfrak d$ such that $f(k)<n$ whenever
$k<n$ are two elements of $A_\alpha$. 
To prove this assume that $f$ is strictly increasing and $f(0)=1$ and consider the interval partition
 $\mathcal I=\{[f^{2i}(0), f^{2i+2}(0)): i\in \N\}$.
  If $\mathcal J$ is an interval partition that dominates
$\mathcal I$, then for almost all endpoints $k<n$ of intervals
in $\mathcal J$ there is $i\in \N$ such that  $k\leq f^{2i}(0)< f^{2i+2}(0)\leq n$. 
In particular $f(k)\leq f^{2i+1}(0)< f^{2i+2}(0)\leq n$, so if we take 
as a set $A$ the endpoints of the intervals of $\mathcal J$ minus some finite set,
  we obtain that $f(k)<n$ whenever
$k<n$ are two elements of $A$. So, as $\mathcal A=\{A_\alpha: \alpha<\mathfrak d\}$ we take the 
family of all finite modifications of the sets of all the endpoints of partitions
from a family of interval partitions of cardinality $\mathfrak d$
which is  dominating  and  which
exists by the discussion at the beginning of this Section.

We will see  that for each $\varepsilon>0$  each  $c_0$-matrix  $M=(m_{k, n})_{k, n\in \N}$
  is $\varepsilon$-fragmented by an element of
$\mathcal{A}$.  Let $\|M\|_\infty=\rho$.
Find a function $f_M : \N \rightarrow \N$ such that  for every 
$n\in \N\setminus\{0\}$ we have
     $$m_{k,n} \leq \frac{\varepsilon}{2^{n+2}}\leqno (1)$$
    for all $k \geq f_M(n)$. Its  existence follows from the fact
    that $M$ is a $c_0$-matrix, and so
    $(m_{k, n})_{k\in \N}$ converges to $0$ for each $n\in \N$.
    Now find  a function  $g_M : \N \rightarrow \N$ 
     such that for any $k \in \N$, 
$$\displaystyle \sum_{n \geq g_M(k)} m_{k,n} \leq \frac{\varepsilon}{2}.\leqno (2)$$ 
Its  existence follows from the fact
    that $\sum_{n\in \N}m_{k, n}\leq\rho$ for every $k\in \N$.
    Now find $\alpha< \mathfrak{d}$ such that 
for any $i<j$ in $A_\alpha$ we have $\max(f_M(i), g_M(i))<j$.

We claim that $M$ is 
        ${\varepsilon}$-fragmented by $A_{\alpha}$. Let $k \in A_{\alpha}$. 
        If $n \in A_{ \alpha}$ and $n < k$, 
        then $f_M(n) < k$, so $m_{k,n} \leq \frac{\varepsilon}{2^{n + 2}}$  by (1) and 
        therefore $\sum_{n < k} m_{k,n} \leq \frac{\varepsilon}{2}$. 
        On the other hand, if $k < n$, then $g_M(k) < n$ and 
        therefore 
$ \sum_{n \in A_{\alpha}, n> k}m_{k,n} \leq \frac{\varepsilon}{2}$ by (2). 
Therefore
$$
\sum_{n \in A_{ \alpha}\setminus\{k\}}m_{k,n} \leq \sum_{n \in A_{\alpha}, n < k} m_{k, n}
 + \sum_{n \in A_{\alpha}, n> k}m_{k, n} \leq {\varepsilon}
$$
which completes the proof.
\end{proof}

\begin{proposition}\label{dr-ros01c0}
$\min\{\mathfrak{d}, \mathfrak{r}\} \leq \mathfrak{ros}_{01}({c_0})$.
\end{proposition}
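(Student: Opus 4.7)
The plan is to show that if $|\F| < \min(\mathfrak{d}, \mathfrak{r})$, then $\F$ is not a binary $c_0$-Rosenthal family; that is, to exhibit a finite-to-one $f \colon \N \to \N$ without fixed points such that $f[A] \cap A \neq \emptyset$ for every $A \in \F$. This strengthens Proposition \ref{r-ros01}, where the nowhere-reaping argument of Lemma \ref{hereditary} already produced an $f$ with $f[A] = \N$ for every $A \in \F$; that $f$ fails to be finite-to-one because $f^{-1}(n) \supseteq B_n \setminus \{n\}$ is infinite. The role of the hypothesis $|\F| < \mathfrak{d}$ will be precisely to repair this defect by forcing the relevant preimages into finite blocks.

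First I would use $|\F| < \mathfrak{r}$: iterating nowhere reaping exactly as in Lemma \ref{hereditary}, one obtains pairwise disjoint infinite sets $\{B_n : n \in \N\}$ with $\bigcup_n B_n = \N$ and $B_n \cap A$ infinite for every $A \in \F$ and $n \in \N$. A further application produces an infinite $T \subseteq \N$, of prescribed sparseness, with $T \cap A$ and $A \setminus T$ both infinite for every $A \in \F$. The elements of $T$ will serve as targets for $f$, so that ensuring $f(k) \in T$ together with $T \cap A$ infinite gives $f(k) \in A$ for many $k$.

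Next I would use $|\F| < \mathfrak{d}$ to control finite-to-oneness. For each $A \in \F$ with increasing enumeration $a_0 < a_1 < \cdots$, define $\tilde{e}_A(k) := a_{2k}$. The family $\{\tilde{e}_A : A \in \F\}$ has cardinality below $\mathfrak{d}$ and so is not dominating; hence there is a strictly increasing $g \colon \N \to \N$ with $\{k : g(k) > a_{2k}\}$ infinite for every $A \in \F$. For such $k$ one has $|A \cap [0, g(k))| \geq 2k+1$, and writing $I_j = [g(j-1), g(j))$ with $g(-1) = 0$, pigeonhole forces the set $J_A := \{j : |A \cap I_j| \geq 2\}$ to be infinite for every $A \in \F$. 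If $g$ and $T$ are chosen jointly so that each $I_j$ contains a unique target $t_j \in T \cap I_j$, one defines $f$ by $f(k) = t_j$ for $k \in I_j \setminus \{t_j\}$ and $f(t_j) = t_{j+1}$. Then $f$ is finite-to-one, since $f^{-1}(t_j) \subseteq I_j \cup \{t_{j-1}\}$ is finite, and has no fixed points; and whenever some $j$ satisfies both $t_j \in A$ and $|A \cap I_j| \geq 2$, any second element $a \in A \cap I_j \setminus \{t_j\}$ witnesses $f(a) = t_j \in A$, so $f[A] \cap A \neq \emptyset$.

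The crux---and the principal obstacle---is the \emph{joint} construction of $T$ and $g$ so that, for every $A \in \F$, the two infinite sets $\{j : t_j \in A\}$ and $J_A$ have infinite intersection. Producing $T$ only from $\mathfrak{r}$ and $g$ only from $\mathfrak{d}$ leaves these two sets a priori disjoint. The remedy is to construct $T$ and $g$ in tandem along a transfinite recursion of length $|\F| < \min(\mathfrak{d}, \mathfrak{r})$: at each stage one maintains a nowhere-reaping invariant on the remaining portion of each $A \in \F$ (using $|\F|<\mathfrak{r}$) while simultaneously enlarging the gaps of $T$ in accordance with a function witnessing that the current family of enumerations is not dominated (using $|\F|<\mathfrak{d}$). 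This synchronisation forces the intervals $I_j$ determined by $T$ to contain a second element of $A$ precisely at stages where $t_j \in A$, and so delivers the desired $f$.
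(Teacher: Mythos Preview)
Your proposal correctly isolates the difficulty but does not close it. The last paragraph is the whole proof, and it is only a promise. A ``transfinite recursion of length $|\F|$'' cannot literally build the set $T\subseteq\N$ and the interval sequence $(I_j)_{j\in\N}$: these are countable objects, determined by $\omega$ many choices, so any genuine recursion here has length $\omega$, and at each finite stage you would have to pick $t_j$ and $I_j$ satisfying a condition for \emph{every} $A\in\F$ simultaneously---which is exactly the synchronisation problem you are trying to solve. Neither the bound $|\F|<\mathfrak r$ nor $|\F|<\mathfrak d$ says anything about single finite stages, so nothing in your hypotheses lets you make such a choice. In short, you have not explained how the two global witnesses (a splitting set from $\mathfrak r$, an undominated function from $\mathfrak d$) can be made to cooperate, and I do not see a way to complete your outline as written.

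The paper avoids this coordination problem by a different decomposition. Rather than build $f$ from scratch with an auxiliary target set $T$, it first invokes $\mathfrak{ros}_{01}=\mathfrak r$ (already proved) to obtain a single fixed-point-free $g$ with $g[A]\cap A\neq\emptyset$ for every $A\in\F$; closure of $\F$ under finite modifications then gives, for each $A$, an interval partition $\mathcal J^A$ such that every block of $\mathcal J^A$ contains a pair $i,g(i)\in A$. Now $|\F|<\mathfrak d$ and the interval-partition characterisation of $\mathfrak d$ produce one interval partition $\mathcal I$ dominating all the $\mathcal J^A$. Finally set $f(i)=g(i)$ when $i$ and $g(i)$ lie in the same block of $\mathcal I$, and $f(i)=i+1$ otherwise. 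This $f$ is finite-to-one (each block of $\mathcal I$ is finite), fixed-point-free, and for every $A$ some block of $\mathcal J^A$ sits inside a block of $\mathcal I$, so the witnessing pair $i,g(i)\in A$ survives the truncation. The key idea you are missing is to let each $A$ carry its own interval data recording where $g$-witnesses live, and to use $\mathfrak d$ to dominate that family of interval partitions---the synchronisation is then automatic, with no need to interleave the two cardinal hypotheses during the construction.
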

\begin{proof}  
Let $\kappa < \min\{ \mathfrak{d}, \mathfrak{r} \}$ and let
  $\mathcal A=\{A_\alpha : \alpha \in \kappa \}$ be a family of 
  infinite subsets of $\N$ closed under finite modifications.
We will construct a finite-to-one $f:\N\rightarrow\N$ such that 
$f[A_\alpha]\cap A_\alpha\not=\emptyset$ 
for any  $\alpha<\kappa$.

First, by Theorem \ref{ros01-main},
because $\kappa < \mathfrak{r}=\mathfrak{ros}_{01}$, 
there is  $g:\N\rightarrow\N$ with no fixed points such that 
$g[A_\alpha]\cap A_\alpha\not=\emptyset$ for every $\alpha<\kappa$.   For every $\alpha<\kappa$,
recursively construct an increasing function $f_\alpha$  such that 
for every $n\in\N$ we have 
$$g\big[[f_\alpha (n), f_\alpha (n+1)) \cap
 A_\alpha\big]\cap \big([f_\alpha (n), f_\alpha (n+1)) \cap
 A_\alpha\big)\not=\emptyset.\leqno (3)$$
  This can be done as 
 $g[A_\alpha\setminus [0, f_\alpha(n))]\cap \big(A_\alpha\setminus [0, f_\alpha(n))\big)\not=\emptyset$
  since $\mathcal A$ is closed under finite modifications.
 
 Secondly  because $\kappa < \mathfrak{d}$ by the discussion at the beginning of this section
 there is an interval partition $\mathcal{I}=\{ I_n : n \in \N \}$ which dominates
 each interval partition $\mathcal J^\alpha=\{[f_\alpha(n), f_\alpha(n+1)): n\in \N\}$
 for $\alpha<\kappa$.
  Define $f:\N\rightarrow\N$ as follows:
$$
f(i) = \begin{cases}
g(i) & \text{ if }i,g(i)\text{ are in the same piece of }\mathcal{I},\\
i+1 & \text{otherwise}.
\end{cases}
$$
Clearly $f$ is finite-to-one and with no fixed points. To finish the
 proof note that if $\alpha < \kappa$, then there is an 
 $n \in \N$ such that $C = [f_\alpha (n) , f_\alpha (n + 1)) \cap A_\alpha$ 
 is included in a single piece of $\mathcal{I}$, so if $i,j \in C$, 
 then $f(i)=g(i)$ and by (3) we have 
 $f[A_\alpha]\cap A_\alpha\not=
 \emptyset$.
 
 \end{proof}

 \begin{theorem}\label{rosc0-main}
$\mathfrak{ros}({c_0})=\min(\mathfrak{d}, \mathfrak{r}).$
\end{theorem}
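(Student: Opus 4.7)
The plan is to combine the upper and lower bounds already established earlier in the paper, so the proof is essentially a chain of inequalities with no further combinatorial work.

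For the upper bound $\mathfrak{ros}(c_0)\leq \min(\mathfrak d,\mathfrak r)$, I first invoke Proposition \ref{rosc0-d} to get $\mathfrak{ros}(c_0)\leq \mathfrak d$. Then I use Proposition \ref{inequalities-operators} (\ref{rosc0-ros}) which says $\mathfrak{ros}(c_0)\leq \mathfrak{ros}$, together with the already proved Theorem \ref{main-theorem} which identifies $\mathfrak{ros}=\mathfrak r$, to obtain $\mathfrak{ros}(c_0)\leq \mathfrak r$. Taking the minimum gives $\mathfrak{ros}(c_0)\leq \min(\mathfrak d,\mathfrak r)$.

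For the lower bound, I combine Proposition \ref{dr-ros01c0}, which says $\min(\mathfrak d,\mathfrak r)\leq \mathfrak{ros}_{01}(c_0)$, with Proposition \ref{inequalities} (\ref{ros01c0-rosc0}), which gives $\mathfrak{ros}_{01}(c_0)\leq \mathfrak{ros}(c_0)$. Chaining these yields $\min(\mathfrak d,\mathfrak r)\leq \mathfrak{ros}(c_0)$.

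There is no real obstacle here since all the substantive content lives in the two propositions just proved (the construction of the sparse family $\mathcal A$ realizing the $\mathfrak d$ bound, and the finite-to-one amalgamation of $g$ with an interval partition) and in the paving lemma used to derive $\mathfrak{ros}\leq \mathfrak r$. The theorem is simply the bookkeeping that ties these four inequalities together into the single equality $\mathfrak{ros}(c_0)=\min(\mathfrak d,\mathfrak r)$.
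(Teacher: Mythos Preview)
Your proof is correct and follows exactly the same approach as the paper: both combine Proposition~\ref{rosc0-d} and Proposition~\ref{inequalities-operators}~(\ref{rosc0-ros}) together with Theorem~\ref{main-theorem} for the upper bound, and Proposition~\ref{dr-ros01c0} with Proposition~\ref{inequalities}~(\ref{ros01c0-rosc0}) for the lower bound. The theorem is indeed just bookkeeping, and you have cited the same four ingredients in the same logical arrangement.
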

\begin{proof} We have $\mathfrak{ros}({c_0})\leq \mathfrak{ros}=\mathfrak{r}$ by
Proposition \ref{inequalities-operators} (\ref{rosc0-ros}) and by Theorem \ref{main-theorem}. 
So Proposition
\ref{rosc0-d} implies $\mathfrak{ros}({c_0})\leq\min(\mathfrak{d}, \mathfrak{r}).$
The other inequality follows from Proposition \ref{inequalities} (\ref{ros01c0-rosc0})
and Proposition \ref{dr-ros01c0}. 
\end{proof}
 
\begin{theorem}\label{ros01c0-main}
$\mathfrak{ros}_{01}(c_0)=\min(\mathfrak d, \mathfrak{r}).$
\end{theorem}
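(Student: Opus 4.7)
The plan is to observe that both inequalities have already been established elsewhere in the text, and the theorem reduces to assembling them. Specifically, I would first note that the upper bound $\mathfrak{ros}_{01}(c_0)\leq \min(\mathfrak d,\mathfrak r)$ follows from Proposition \ref{inequalities}~(\ref{ros01c0-rosc0}), which gives $\mathfrak{ros}_{01}(c_0)\leq \mathfrak{ros}(c_0)$, combined with Theorem \ref{rosc0-main}, which identifies $\mathfrak{ros}(c_0)$ with $\min(\mathfrak d,\mathfrak r)$.

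For the reverse inequality, I would appeal directly to Proposition \ref{dr-ros01c0}, whose statement is precisely $\min(\mathfrak d,\mathfrak r)\leq \mathfrak{ros}_{01}(c_0)$. That proposition constructs, for any family $\mathcal A\subseteq[\N]^\omega$ of cardinality strictly less than $\min(\mathfrak d,\mathfrak r)$ (and closed under finite modifications, which we may assume without loss of generality), a finite-to-one function $f:\N\to\N$ with no fixed points for which $f[A_\alpha]\cap A_\alpha\neq\emptyset$ for every $A_\alpha\in\mathcal A$; thus $\mathcal A$ meets no ${Ros}_f\in\mathsf{Ros}_{01}(c_0)$ and so cannot be a binary $c_0$-Rosenthal family.

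Since both halves are essentially one-line consequences of results proved earlier in the paper, there is no genuine obstacle to overcome; the only subtlety is the bookkeeping, namely that Proposition \ref{dr-ros01c0} gives the lower bound on $\mathfrak{ros}_{01}(c_0)$ (not on $\mathfrak{ros}(c_0)$), and so combines cleanly with the upper bound obtained through $\mathfrak{ros}(c_0)$ via the inclusion $\mathsf{Ros}_{01}(c_0)\subseteq \mathsf{Ros}(c_0)$.
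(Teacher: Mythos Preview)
Your proposal is correct and follows exactly the same approach as the paper's own proof, which simply cites Proposition~\ref{dr-ros01c0}, Theorem~\ref{rosc0-main}, and Proposition~\ref{inequalities}~(\ref{ros01c0-rosc0}). The only difference is that you spell out which reference gives which inequality and briefly recall the content of Proposition~\ref{dr-ros01c0}, whereas the paper leaves this implicit.
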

\begin{proof} Use Proposition \ref{dr-ros01c0}, Theorem \ref{rosc0-main}
and Proposition \ref{inequalities} (\ref{ros01c0-rosc0}).
\end{proof}

We should add here that $\min(\mathfrak{d}, \mathfrak{r})$ is investigated in
\cite{aubrey} where for example it is proved that 
$\min(\mathfrak{d}, \mathfrak{r})=\min(\mathfrak{d}, \mathfrak{u})$.

\bibliographystyle{amsplain}

\begin{thebibliography}{20}
\bibitem{aubrey}  J. Aubrey, \emph{Combinatorics for the dominating and unsplitting numbers}. J. Symbolic Logic 69 (2004), no. 2, 482--498.

\bibitem{balcar-simon} B. Balcar, P. Simon, \emph{On minimal $\pi$-character of points in extremally disconnected compact spaces}. Topology Appl. 41 (1991), no. 1-2, 133--145.




\bibitem{blass}  A. Blass, \emph{Combinatorial cardinal characteristics of the continuum}.
 Handbook of set theory. Vols. 1, 2, 3, 395--489, Springer, Dordrecht, 2010. 


\bibitem{booth}  D. Booth, \emph{Ultrafilters on a countable set}. 
Ann. Math. Logic 2 1970/1971 no. 1, 1--24. 

\bibitem{bourgain} J. Bourgain, 
New classes of $\mathcal{L}_p$-spaces.
Lecture Notes in Mathematics, 889. Springer-Verlag, Berlin-New York, 1981.

\bibitem{bourgain-tzafriri} J. Bourgain, L. Tzafriri, 
\emph{Restricted invertibility of matrices and applications}. 
Analysis at Urbana, Vol. II (Urbana, IL, 1986–1987), 61--107,
London Math. Soc. Lecture Note Ser., 138, Cambridge Univ. Press, Cambridge, 1989. 

\bibitem{bownik}  M. Bownik,  \emph{The Kadison-Singer problem}. 
Frames and harmonic analysis, 63--92, Contemp. Math., 706, Amer. Math. Soc., Providence, RI, 2018.



\bibitem{dberdos} N. G. de Bruijn, P. Erd\"os, \emph{A colour problem for infinite graphs and a problem in the theory of infinite relations}, Indag. Math. 13 (1951) 371--373.



\bibitem{ppoints}  D. Chodounsk\'y, O.  Guzm\'an,  
\emph{There are no P-points in Silver extensions}. Israel J. Math. 232 (2019), no. 2, 759--773. 



\bibitem{diestel-uhl} J. Diestel, J.Uhl, \emph{Vector measures}. With a foreword by B. J. Pettis. Mathematical Surveys, No. 15. American Mathematical Society, Providence, R.I., 1977. 

\bibitem{diestel} J. Diestel, \emph{Sequences and series in Banach spaces}. Graduate Texts in Mathematics, 92. Springer-Verlag, New York, 1984. 


\bibitem{erdos} P. Erd\"os, \emph{Some remarks on set theory}, Proc. Amer. Math. Soc. 1 (1950),
127--141

\bibitem{farah} I. Farah, \emph{Semiselective coideals}.
Mathematika 45 (1998), no. 1, 79--103. 

\bibitem{goldstern} M. Goldstern, S. Shelah, \emph{Ramsey ultrafilters and the reaping 
number--Con($\mathfrak r<\mathfrak u$)}. Ann. Pure Appl. Logic 49 (1990), no. 2, 121--142.

\bibitem{grothendieck} A. Grothendieck, 
\emph{Sur les applications lin\'eaires faiblement compactes d'espaces du type $C(K)$.} 
Canadian J. Math. 5, (1953). 129--173. 

\bibitem{reaping}  M. Hru\v s\'ak, D. Meza-Alc\'antara, H.  Minami, \emph{Pair-splitting, pair-reaping and cardinal invariants of $F_\sigma$-ideals}. J. Symbolic Logic 75 (2010), no. 2, 661--677. 

\bibitem{handbook} W. Johnson, G.  Schechtman,  \emph{Finite dimensional subspaces of 
$L_p$.} Handbook of the geometry of Banach spaces, Vol. I, 837--870, North-Holland, Amsterdam, 2001. 

\bibitem{komjath} P. Komj\'ath, V.  Totik, \emph{Problems and theorems in classical set theory}. Problem Books in Mathematics. Springer, New York, 2006.



\bibitem{lindenstrauss} J. Lindenstrauss, 
\emph{On complemented subspaces of $m$.}
Israel J. Math. 5 1967 153--156. 

\bibitem{ks} A. Marcus, D. Spielman, N. Srivastava, \emph{Interlacing families II: Mixed characteristic polynomials and the Kadison-Singer problem}. Ann. of Math. (2) 182 (2015), no. 1, 327--350. 


\bibitem{pelczynski1} A. Pe\l czy{\'n}ski, 
\emph{Projections in certain Banach spaces}.
Studia Math. 19 1960 209--228. 


\bibitem{phillips} R. Phillips, 
\emph{On linear transformations.}
Trans. Amer. Math. Soc. 48, (1940). 516--541. 

\bibitem{rosenthal1} H. Rosenthal, \emph{On complemented and quasi-complemented 
subspaces of quotients of $C(S)$ for Stonian $S$.} Proc. Nat. Acad. Sci. U.S.A. 60 1968 1165--1169.

\bibitem{rosenthal2} H. Rosenthal, \emph{On relatively disjoint families of measures, with some applications to Banach space theory}. Studia Math. 37 1970 pp. 13--36.

 



\bibitem{damian-nikodym} 
 D. Sobota, \emph{The Nikodym property and cardinal characteristics of the continuum},
  Ann. Pure Appl. Logic 170 (2019), no. 1, 1--35.



\bibitem{damian-rosenthal} D. Sobota, \emph{Families of sets related to Rosenthal's lemma}, 
 Arch. Math. Logic (2019) 58 pp. 53--69




\bibitem{wimmers} E. Wimmers, \emph{The Shelah P-point independence theorem}, Israel J.Math.43 (1982), no. 1, 28--48. 
 
 \end{thebibliography}

\end{document}